\DeclareMathOperator{\mdeg}{mdeg}
\DeclareMathOperator{\Iso}{Iso}
\DeclareMathOperator{\PIso}{PIso}
\title{\textbf{\uppercase{\large{Automorphism groups of deformations and quantizations of Kleinian singularities}}}}
\author{Simone Castellan}
\date{}
\begin{document}

\maketitle

\begin{abstract}
    \noindent It is known that, for the algebra of functions on a Kleinian singularity, the parameter space of deformations and the parameter space of quantizations coincide. We prove that, for a Kleinian singularity of type $\mathbf{A}$ or $\mathbf{D}$, isomorphisms between the quantizations are essentially the same as Poisson isomorphisms between deformations. In particular, the group of automorphisms of the deformation and the quantization corresponding to the same deformation parameter are isomorphic. We additionally describe the groups of automorphisms as abstract groups: for type $\mathbf{A}$ they have an amalgamated free product structure, for type $\mathbf{D}$ they are subgroups of the groups of Dynkin diagram automorphisms. For type $\mathbf{D}$ we also compute all the possible affine isomorphisms between deformations; this was not known before.
\end{abstract}

\section{Introduction}
\renewcommand*{\thetheorem}{\Alph{theorem}}
Consider a non-commutative, associative filtered algebra $B$ and its semi-classical limit $\Bb$, which is a Poisson algebra. The Poisson structure on $\Bb$ can be thought of as a first-order approximation of the algebra structure on $B$. It is then natural to ask how much does $\Bb$ recover of the properties of $B$. This question has a rich history; there is no general theory, but a lot of work has been done in this direction for specific algebras. In \cite{goodearlSemiclassicalLimitsQuantized2010}, it is conjectured that the prime and primitive spectra of the quantized coordinate rings are respectively homeomorphic to the Poisson prime and Poisson primitive spectra of their corresponding semi-classical limits when the base field is algebraically closed and of characteristic zero. This conjecture has since been verified in a number of examples \cite{fryerPrimeSpectrumQuantum2017, goodearlSemiclassicalLimitsQuantum2009, goodearlSemiclassicalLimitsQuantized2010}. In \cite{launoisPoissonDerivationsSemiclassical2023, launoisDerivationsFamilyQuantum2023}, the authors studied the derivations of certain deformations $A_{\a,\b}$ of the second Weyl algebra and the Poisson derivations of their semi-classical limit $\Aa_{\a,\b}$, proving that the first Hochschild cohomology group $HH^1(A_{\a,\b})$ is isomorphic to the first Poisson cohomology group $HP^1(\Aa_{\a,\b})$. In \cite{choSemiclassicalLimitsOre2016}, it is proved that the endomorphisms of the generalized Weyl algebras are the same as the Poisson endomorphisms of the Poisson generalized Weyl algebras. The connection between the $0$-th Poisson homology $HP_0(\Bb)$ and the $0$-th Hochschild homology $HH_0(B)$ was studied by Etingof and Schedler in a series of papers \cite{etingofPoissonTracesDModules2010, etingofTracesFiniteMathcal2010, etingofZerothPoissonHomology2012, etingofPoissonTracesSymmetric2014}, proving that $HP_0(\Bb)\cong HH_0(B)$ in many examples, including symmetric powers of isolated quasi-homogeneous surface singularities and finite $W$-algebras.

A particularly interesting problem concerns the relation between automorphisms. Belov-Kanel and Kontsevich made the following remarkable conjecture: 
\begin{conj}[{\cite{belov-kanelAutomorphismsWeylAlgebra2005}}]\label{conj kontsevich}
The automorphism group of the Weyl algebra of index $n$ over
$\CC$ is isomorphic to the group of Poisson automorphisms of $\CC[x_1,\dots,x_n,y_1,\dots,y_{2n}]$, with the standard symplectic structure.
\end{conj}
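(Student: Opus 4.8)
The Belov-Kanel--Kontsevich conjecture is famously open, so the following is the standard line of attack rather than a complete argument. The plan has three stages: (i) build a comparison homomorphism $\Phi$ from the automorphism group of the Weyl algebra to the group of Poisson automorphisms of the symplectic polynomial ring; (ii) prove $\Phi$ is injective; (iii) prove $\Phi$ is surjective. Write $A_n$ for the $n$-th Weyl algebra over $\mathbb{C}$ and $P=\mathbb{C}[x_1,\dots,x_n,y_1,\dots,y_n]$ for the polynomial ring with the standard symplectic bracket $\{x_i,y_j\}=\delta_{ij}$.

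For (i), I would follow the reduction-to-positive-characteristic idea of Tsuchimoto and of Belov-Kanel and Kontsevich \cite{belov-kanelAutomorphismsWeylAlgebra2005}. Spread $A_n$ out over a finitely generated subring of $\mathbb{C}$ and reduce modulo a large prime $p$: over $\overline{\mathbb{F}}_p$ the Weyl algebra becomes an Azumaya algebra of rank $p^{2n}$ over its center $Z_p=\overline{\mathbb{F}}_p[x_i^p,y_i^p]$, and $Z_p$ carries the Poisson bracket gotten by ``dividing the commutator by $p$''. Any algebra automorphism preserves $Z_p$ and acts there as a Poisson automorphism; assembling these restrictions over all large $p$ via an ultraproduct (equivalently a model-theoretic transfer argument, using that $\mathrm{Aut}(A_n)$ is an ind-scheme defined over a finitely generated ring) produces $\Phi$ in characteristic zero.

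For (ii): if $\varphi\in\mathrm{Aut}(A_n)$ acts trivially on $P$, then in every characteristic $p$ it fixes $Z_p$ pointwise, hence is inner, implemented by a unit of $A_n\otimes\overline{\mathbb{F}}_p$ whose degree is bounded independently of $p$; a degree estimate forces that unit to be a scalar, so $\varphi=\mathrm{id}$. This stage is essentially understood. The real content is (iii): every polynomial symplectomorphism of $P$ should ``quantize'', i.e.\ lie in the image of $\Phi$. For $n=1$ one can do this by generators: the group of polynomial symplectomorphisms of the affine plane is generated by affine and triangular (de Jonqui\`eres) maps --- a Poisson analogue of the Jung--van der Kulk theorem --- and each generator lifts tautologically to $\mathrm{Aut}(A_1)$, giving surjectivity.

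I expect (iii) for $n\ge 2$ to be the main obstacle, and an essentially irreducible one with current tools. There is no known generation-by-tame-automorphisms statement for $\mathrm{Aut}_{\mathrm{Poiss}}(P)$ when $n\ge 2$ --- by analogy with the Nagata automorphism in three variables the tame subgroup is expected to be proper --- so surjectivity cannot be reduced to checking generators. The alternative, running the characteristic-$p$ machinery of step (i) in reverse to build from a given symplectomorphism a compatible family of automorphisms of the algebras $A_n\otimes\overline{\mathbb{F}}_p$ and then patching, is exactly the point at which this conjecture meets the Dixmier and Jacobian conjectures --- Belov-Kanel, Kontsevich and Tsuchimoto show the stable ($n\to\infty$) versions of all three are equivalent --- and it remains open.
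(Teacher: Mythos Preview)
Your assessment is correct in the most important respect: the paper does \emph{not} prove this statement. Conjecture~\ref{conj kontsevich} is quoted in the introduction purely as motivation --- the paper notes that it ``is verified for $n=1$ and for the subgroups of tame automorphisms'' and then moves on to formulate and study its own Conjecture~\ref{conj main} about Kleinian singularities, which is the actual subject of the paper. There is therefore no paper-proof to compare your proposal against.

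Your outline of the Tsuchimoto/Belov-Kanel--Kontsevich reduction-mod-$p$ strategy, the injectivity argument via bounded-degree inner automorphisms, and the obstruction to surjectivity for $n\ge 2$ is an accurate summary of the state of the art and goes well beyond anything the paper attempts. It is a reasonable expository account of why the conjecture is hard, not a proof --- and you say as much. One minor point: the statement in the paper has a typo ($y_1,\dots,y_{2n}$ should read $y_1,\dots,y_n$), which you silently corrected; that is fine.
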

The conjecture is verified for $n=1$ and for the subgroups of tame automorphisms \cite{belov-kanelAutomorphismsWeylAlgebra2005}. The importance of Conjecture \ref{conj kontsevich} relies on its connection to other important open problems, like the Jacobian conjecture and the Dixmier conjecture \cite{belov-kanelJacobianConjectureStably2007,belov-kanelPolynomialAutomorphismsQuantization2022}. It is a hard problem, and it motivates the study of the relation between the automorphism groups of Poisson algebras and their quantizations in other cases. 

For a graded Poisson algebra $\Bb$, a filtered deformation is a filtered Poisson algebra $\Cc$ such that $\gr(\Cc)\cong\Bb$ as graded Poisson algebras. Suppose that the deformation and quantization theory of $\Bb$ has the desirable property that deformations and quantizations are parameterized by the same moduli space $\mathfrak{M}$ (see \cite{ambrosioUniversalFilteredQuantizations2022}). We denote by $B_{0,c}$ and $B_{1,c}$ respectively the deformation and quantization corresponding to the parameter $c\in\mathfrak{M}$. Consider all possible associative algebra isomorphisms between the various quantizations and all Poisson isomorphisms between the various deformations. They form two groupoids, where the objects are the algebras $B_{1,c}$ (respectively $B_{0,c}$) and the morphisms are the (Poisson) isomorphisms; we denote them by $\Iso(B)$ (respectively $\PIso(B)$). In this paper, we shall investigate the relation between $\Iso(B)$ and $\PIso(B)$, through the case study of the algebra of functions on a Kleinian singularity. In particular, understanding the groupoid of isomorphisms will give us information about the automorphism groups of all the deformations and quantizations.

Recall that a Kleinian singularity is a quotient space $V/\Gamma$, where $V$ is a two dimensional complex vector space, and $\Gamma$ is a finite subgroup of $SL(2,\CC)$. Kleinian singularities are isomorphic to surfaces in $\AA^3$, so
\begin{equation*}
    \CC[V/\Gamma]=\CC[V]^\Gamma\cong\CC[x,y,z]/(F), \quad \text{with $F\in\CC[x,y,z]$.}
\end{equation*}
Moreover, $\CC[V]^\Gamma$ carries the structure of a Poisson algebra, induced by the standard symplectic form on $V$. Kleinian singularities, and their corresponding algebras of functions, are classified by simply laced Dynkin diagrams (type $\mathbf{A_n} \ n\geq1, \mathbf{D_n} \ n\geq4, \mathbf{E_6},\mathbf{E_7},\mathbf{E_8}$) (see \cite{mckayGraphsSingularitiesFinite1980}). 

The study of deformations and quantizations of Kleinian singularities has a long and rich history. In both cases the moduli space is $\hh/W$, where $\hh$ and $W$ are, respectively, the Cartan subalgebra and the Weyl group of the Lie algebra of corresponding Dynkin type. The construction of the semi-universal deformation is due to the work of Grothendieck, Brieskorn, Kronheimer and Slodowy \cite{slodowySimpleSingularitiesSimple1980, kronheimerConstructionALESpaces1989, katzGorensteinThreefoldSingularities1992}. The quantizations of a Kleinian singularity were constructed by Crawley-Boevey and Holland in \cite{crawley-boeveyNoncommutativeDeformationsKleinian1998} as a family of algebras that they denote $\Oo^\l$. In \cite{crawley-boeveyNoncommutativeDeformationsKleinian1998} it is also proved that $\Oo^\l$ is isomorphic to $e_0\Pi^\l(Q) e_0$, the spherical subalgebra of the deformed preprojective algebra of the McKay quiver $Q$ associated to the singularity (see \cite{mckayGraphsSingularitiesFinite1980}). Explicit presentation with generators and relations for the quantizations in type $\mathbf{A}$ appeared in earlier works by Hodges \cite{hodgesNoncommutativeDeformationsTypeA1993}, Smith \cite{smithClassAlgebrasSimilar1990}, and  Bavula and Jordan, who studied them as special cases of generalized Weyl algebras \cite{bavulaGeneralizedWeylAlgebras1992,bavulaIsomorphismProblemsGroups2001}. For type $\mathbf{D}$, an explicit construction of the quantizations was given by Levy in \cite{levyIsomorphismProblemsNoncommutative2009} and, using different methods, by Boddington \cite{boddingtonDeformationsTypeKleinian2007}. 

We formulate the following conjecture.
\begin{conj}\label{conj main}
    Let $V/\Gamma$ be a Kleinian singularity. Then there is an isomorphism of groupoids
    \begin{equation*}
        \PIso(\CC[V/\Gamma])\cong\Iso(\CC[V/\Gamma]).
    \end{equation*}
\end{conj}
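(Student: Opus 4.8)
The plan is to establish the conjecture in the cases treated in this paper, namely Dynkin types $\mathbf{A}$ and $\mathbf{D}$, by computing both groupoids explicitly from presentations and then exhibiting a comparison functor. A groupoid isomorphism amounts to a bijection on objects together with bijections on Hom-sets compatible with composition; since on both sides the objects $B_{0,c}$ and $B_{1,c}$ are indexed by $c\in\hh/W$, the core of the argument is to determine, on each side, precisely which parameters yield (Poisson-)isomorphic algebras and to write the isomorphisms down. The first reduction is structural: a (Poisson) isomorphism $B_{i,c}\to B_{i,c'}$ necessarily respects the canonical filtration coming from the grading on $\CC[V/\Gamma]$, hence is a filtered map whose associated graded is a graded (Poisson) automorphism of $\gr B_{i,c}\cong\CC[V/\Gamma]$. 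Classifying these graded automorphisms --- essentially affine transformations of the distinguished coordinate together with Dynkin diagram symmetries --- confines every isomorphism to a low-dimensional, explicitly parameterized family of candidates, which can then be tested directly against the defining relations.

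On the classical side, for type $\mathbf{A}$ one has $B_{0,c}=\CC[x,y,z]/(xy-P_c(z))$ as a Poisson algebra, and the computation that $B_{0,c}\cong B_{0,c'}$ Poisson-isomorphically exactly when $P_{c'}$ is obtained from $P_c$ by an affine substitution $z\mapsto\pm z+\lambda$ (followed by rescaling $x,y$) is classical; it produces for the automorphism group of a fixed $B_{0,c}$ the amalgamated-free-product description announced in the abstract, in parallel with the Dixmier--Makar-Limanov picture of the automorphism group of the first Weyl algebra. For type $\mathbf{D}$ this is the genuinely new ingredient: starting from Levy's explicit presentation of the deformations, I would enumerate all affine Poisson isomorphisms between them in terms of Levy's parameters, and argue --- again using the filtration reduction --- that no further, ``exotic'' Poisson isomorphisms occur. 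The expected outcome is that the set of parameter identifications on $\hh/W$ realized by $\PIso$ is a subgroup of the group of automorphisms of the Dynkin diagram (order two for $\mathbf{D}_n$, $n\geq5$; the symmetric group on three letters for $\mathbf{D}_4$).

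For the quantum side I would invoke the existing solutions of the isomorphism problem --- Bavula--Jordan for the generalized Weyl algebras of type $\mathbf{A}$ and Levy for type $\mathbf{D}$ --- and observe that the resulting criterion for $B_{1,c}\cong B_{1,c'}$ matches the classical criterion verbatim under a canonical (possibly trivial) identification $\phi$ of the two copies of $\hh/W$, of the type relating a filtered quantization to its semiclassical limit. The comparison functor $\PIso(\CC[V/\Gamma])\to\Iso(\CC[V/\Gamma])$ is then defined on objects by $B_{0,c}\mapsto B_{1,\phi(c)}$ and on morphisms by lifting a Poisson isomorphism to the algebra isomorphism given by the same leading-order formulas on generators, the lower-order correction terms being uniquely forced by the requirement that the quantum relations hold; its inverse is the associated-graded / semiclassical-limit functor. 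That this assignment is well defined, bijective on each Hom-set, and compatible with composition and identities is then a direct verification using the explicit descriptions from the two computations above.

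The main obstacle is the type-$\mathbf{D}$ classical computation: the affine Poisson isomorphisms between the type-$\mathbf{D}$ deformations have not been recorded before, Levy's presentation is intricate, and one must carefully rule out non-affine Poisson isomorphisms rather than merely exhibit the obvious ones. A second, more technical, difficulty is pinning down the reparameterization $\phi$ precisely, so that the quantum and classical isomorphism criteria coincide on the nose and the two groupoids are matched by an honest functor rather than merely shown to be abstractly isomorphic.
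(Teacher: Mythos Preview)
Your plan contains a genuine gap in its ``first reduction'': the claim that every (Poisson) isomorphism $B_{i,c}\to B_{i,c'}$ is automatically filtered is false in type~$\mathbf{A}$. The triangular Poisson automorphisms $\Delta_g=(x+[P(z+yg(y))-P(z)]y^{-1},\,y,\,z+yg(y))$ of $\Aa(P)$, for arbitrary $g\in\CC[y]$, are not filtered once $n>2$ and $\deg g\geq 0$: already $\Delta_g(z)=z+yg(y)$ has filtration degree $(k+1)n>2$. These form an infinite-dimensional family and are precisely what gives the amalgamated free product structure you mention; a filtration argument would miss them entirely and leave you with only the hyperbolic rotations $\Theta_\nu$ and the involution $\Omega$. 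So your low-dimensional family of candidates is the wrong object in type~$\mathbf{A}$.

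The paper instead proceeds as follows. In type~$\mathbf{A}$ it takes Makar-Limanov's complete description of $\Aut(\Aa(P))$ as an affine variety, singles out the Poisson ones by direct computation, and shows they coincide with the image $G_0$ of an explicit group $G_t$ of $\CC[t]$-linear automorphisms of the interpolating family $A_t(P)$; the same $G_t$ specializes at $t=1$ to Bavula--Jordan's generators of $\Aut(A(P))$. The matching of the two groups is then obtained by proving an amalgamated free product decomposition for $G_t$ via a multidegree argument that tracks how composing with $\Phi_{g,t}$ or $\Psi_{g,t}$ strictly increases degrees. In type~$\mathbf{D}$ the argument is geometric, not algebraic: one compactifies $\Dd_n(Q,\gamma)$ to a projective surface $X_n(Q,\gamma)$ inside a $\PP^2$-bundle over $\PP^1$, shows via intersection theory on the boundary divisor that every affine isomorphism extends to the compactification, and then uses the conic-bundle structure to pin down the isomorphisms explicitly. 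Only \emph{a posteriori} does one see that all type~$\mathbf{D}$ isomorphisms happen to be filtered; this is a conclusion, not an input. Your proposed filtration reduction could in principle be made to work in type~$\mathbf{D}$, but you would still owe an argument for why no non-filtered isomorphisms exist, and that is exactly the hard part.
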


Since the only filtered quantization of the polynomial algebra $\CC[x_1,\dots,x_n,y_1,\dots,y_{2n}]$ is the Weyl algebra, Conjecture \ref{conj main} can be seen as a direct analog of Belov-Kanel-Kontsevich Conjecture for the algebra of function on a Kleinian singularity. The main result of this paper is the following theorem (see Theorems \ref{teo iso type A} and \ref{teo iso type D})

\begin{theorem}\label{teo main}
    Let $V/\Gamma$ be a Kleinian singularity of type $\mathbf{A}$ or $\mathbf{D}$. Then Conjecture \ref{conj main} holds. In particular, the group of automorphisms of the quantization and of Poisson automorphisms of the deformation corresponding to the same parameter are isomorphic.
\end{theorem}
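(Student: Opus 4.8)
The plan is to prove the two cases, type $\mathbf{A}$ and type $\mathbf{D}$, separately, in each case reducing to an explicit classification of isomorphisms built from the known presentations of the deformations $B_{0,c}$ and the quantizations $B_{1,c}$, and then assembling a functor $\Phi\colon\PIso(\CC[V/\Gamma])\to\Iso(\CC[V/\Gamma])$ which is the identity on objects (so that $B_{0,c}\mapsto B_{1,c}$) and a bijection on each Hom-set. The detailed statements are Theorems \ref{teo iso type A} and \ref{teo iso type D}. Since both the deformations and the quantizations are parameterized by $\mathfrak{M}=\hh/W$, the strategy is to show that the two groupoids admit \emph{the same combinatorial description} in terms of the parameter, and to check that the resulting identification is functorial; the ``in particular'' clause then follows by restricting $\Phi$ to the automorphism group of a single object.

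For type $\mathbf{A_{n-1}}$, both families fit into the framework of (Poisson) generalized Weyl algebras over $\CC[h]$: the quantization attached to $c$ is $\CC[h]\langle x,y\rangle/(xy-f_c(h),\ yx-f_c(h-1))$ for a monic degree-$n$ polynomial $f_c$ whose roots encode $c$, and the deformation is the Poisson algebra with the same underlying module and brackets $\{h,x\}=x$, $\{h,y\}=-y$, $\{x,y\}=f_c'(h)$ (up to normalization) read off from $f_c$. On the quantization side the isomorphism problem is completely solved by the work of Bavula and Jordan: $B_{1,c}\cong B_{1,c'}$ exactly when $f_c$ and $f_{c'}$ are related by an affine change of $h$ together with a possible swap of $x$ and $y$, and arbitrary (not necessarily filtered) isomorphisms are of an explicitly controlled ``exponential-triangular'' form. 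The step here is to carry out the analogous analysis on the Poisson side — classifying Poisson isomorphisms of Poisson generalized Weyl algebras — and observe that it yields the \emph{identical} admissibility condition on $(f_c,f_{c'})$ and the identical list of isomorphisms. Consequently $\PIso$ and $\Iso$ are both identified with the same groupoid whose objects are $n$-point configurations in $\CC$ up to simultaneous translation and whose morphisms are the affine maps (plus swap) matching them; restricting to a single object and untwisting this description exhibits $\operatorname{Aut}(B_{0,c})\cong\operatorname{Aut}(B_{1,c})$ as an amalgamated free product of two explicitly described subgroups over their intersection, in analogy with the Dixmier--Makar-Limanov structure theorem for $\operatorname{Aut}$ of the first Weyl algebra.

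For type $\mathbf{D_n}$ the situation is rigid. Here Levy's work already classifies $\Iso(\CC[V/\Gamma])$ on the quantization side, so the remaining task — new, as the abstract notes — is the classification of Poisson isomorphisms between the deformations. I would start from the explicit Poisson presentation of $B_{0,c}$ (the semiclassical limit of Levy's/Boddington's algebras), use the grading together with the fixed degree of the Poisson bracket to pin down the leading terms of the images of the distinguished generators, thereby proving a rigidity statement: every Poisson isomorphism between type $\mathbf{D}$ deformations is \emph{affine}. One then enumerates the affine isomorphisms by solving the finite polynomial system they impose on the parameter $c$. The outcome should match Levy's list on the nose, so that $\PIso(\CC[V/\Gamma])$ and $\Iso(\CC[V/\Gamma])$ are both governed by the same subgroup of $\operatorname{Aut}$ of the $\mathbf{D_n}$ Dynkin diagram attached to $c$, whence $\operatorname{Aut}(B_{0,c})\cong\operatorname{Aut}(B_{1,c})$ embeds in $S_3$ when $n=4$ and in $\mathbb{Z}/2\mathbb{Z}$ when $n\geq5$.

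The main obstacle I anticipate is the type $\mathbf{D}$ Poisson classification: establishing affine rigidity against the possibility of ``exotic'' non-affine Poisson isomorphisms requires a careful leading-term analysis in a presentation whose relations are genuinely complicated, and then the parameter bookkeeping must be organized so that the list of admissible affine maps matches Levy's quantization answer term by term rather than merely abstractly. A secondary, lighter difficulty, present in both types, is that the natural passage between deformations and quantizations can differ from the chosen identification of parameter spaces by a mild twist (a sign, or a shift of the central character); some care is needed to normalize $\Phi$ so that it genuinely sends $B_{0,c}$ to $B_{1,c}$ and to verify it respects composition. Once these points are settled, the isomorphism $\PIso(\CC[V/\Gamma])\cong\Iso(\CC[V/\Gamma])$ and the statement about automorphism groups both follow.
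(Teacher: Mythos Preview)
Your type~$\mathbf{A}$ outline is essentially the paper's approach: Bavula--Jordan on the quantization side, a parallel Poisson classification (here built on Makar-Limanov's affine description of $\Aut(\Aa(P))$ and a check of which generators are Poisson), and an amalgamated free product structure obtained uniformly for both via a multidegree argument. One device the paper uses that you do not mention is to lift all the candidate generators to the $\CC[t]$-family $A_t(P)$, so that the amalgamated structure is proved once for $G_t$ and specialized to $G_0=\PAut$ and $G_1=\Aut$ simultaneously; this makes the ``$\Phi$ respects composition'' normalization you worry about automatic.

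In type~$\mathbf{D}$ your plan genuinely diverges from the paper. You propose an algebraic rigidity argument: use the grading and the degree of the Poisson bracket to force any Poisson isomorphism to be affine, then solve the resulting polynomial system. The paper instead proceeds geometrically: it compactifies each $\Dd_n(Q,\gamma)$ to a conic-bundle surface $X_n(Q,\gamma)$ inside a $\PP^2$-bundle over $\PP^1$, computes the boundary $C_n\cup F_+\cup F_-$ and the self-intersections $C_n^2=3-n$, $F_\pm^2=-1$, and shows via an elementary blow-up/intersection argument that every affine isomorphism $\Dd_n\to\Dd_n'$ extends to $X_n\to X_n'$. The classification then reduces to isomorphisms preserving (or, for $n=4$, permuting) the boundary components, which forces the explicit linear form and yields the extra order-$3$ map $\tau$ when $n=4$. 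Only \emph{afterwards} does the paper check which of these are Poisson.

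The practical difference is this: your leading-term argument presupposes some control on the filtration behaviour of an arbitrary Poisson isomorphism, but a priori there is none---in type~$\mathbf{A}$ the non-filtered triangular automorphisms show exactly how this can fail. In type~$\mathbf{D}$ it turns out that all isomorphisms are filtered, but in the paper this is an \emph{output} of the compactification argument (via the negativity of $C_n^2$ for $n>4$), not an input. So your route is not wrong in principle, but the step ``pin down leading terms $\Rightarrow$ affine'' is where the real work hides, and you should expect it to be substantially harder than the clean intersection-number argument the paper gives. Conversely, the paper's method buys more: it classifies \emph{all} affine isomorphisms between deformations (Theorem~\ref{teo iso Dn}), not just the Poisson ones, which is stated as a result of independent interest.
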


The isomorphism problem for the quantizations was already solved in \cite{bavulaIsomorphismProblemsGroups2001}, for type $\mathbf{A}$, and in \cite{levyIsomorphismProblemsNoncommutative2009}, for type $\mathbf{D}$, while the Poisson side is less studied in the literature. In type $\mathbf{A}$, the automorphism groups of the deformations (considered as affine varieties) were computed by Makar-Limanov \cite{makar-limanovGroupsAutomorphismsClass1990} and further studied by Blanc \cite{blancAutomorphismsPreFibered2011}. Naurazbekova and Umirbaev studied in \cite{naurazbekovaAutomorphismsSimpleQuotients2021} the groups of Poisson automorphisms of deformed Kleinian singularities in type $\mathbf{A_1}$. They proved that the groups are independent of the deformation parameter and that they have an amalgamated free product structure. In the case $n=1$, the deformations and quantizations can be identified with quotients of the symmetric algebra and universal enveloping algebra of $\ss\ll_2$, respectively. The automorphism groups of these quotients of $U(\ss\ll_2)$ are known to have the same amalgamated free product structure \cite{dixmierQuotientsSimplesAlgebre1973, fleurySousgroupesFinisAut1998}, which implies that the two groups are isomorphic \cite[Theorem 5]{naurazbekovaAutomorphismsSimpleQuotients2021}. In the present paper, we show that these results can be generalized to $n>1$, even if we no longer have the $\ss\ll_2$ structure in higher degree. The main difference is that, for $n>1$, the (Poisson) automorphism groups depend on the deformation parameter.

\begin{theorem}
    Let $n>2$. The group $G$ of Poisson automorphisms of a deformation of a $\mathbf{A_{n-1}}$ Kleinian singularity is isomorphic to the group of automorphisms of the corresponding quantization. The dependence of $G$ on the deformation parameter splits into two classes, one for generic and one for special parameter (see Theorem \ref{teo amalgamated free product} for more details):
    \begin{enumerate}[(i)]
        \item for special deformation parameter and $n$ even,
        $G\cong (\CC[y]\rtimes\CC^\times)\ast_{\CC^\times} (\CC^\times\rtimes\ZZ/2\ZZ);$
        \item for special deformation parameter and $n$ odd,
        $G\cong (\CC[y]\rtimes\CC^\times)\ast_{\CC^\times} H$, where \\$H=\langle \CC^\times, \Omega \, | \, \Omega^2=-1, \,\l\cdot\Omega=\Omega\cdot\l^{-1} \ \forall\l\in\CC^\times \rangle$;
        \item for generic deformation parameter, $G\cong (\CC[y]\rtimes\CC^\times)\ast_{\CC^\times} (\CC[x]\rtimes\CC^\times).$
    \end{enumerate}
\end{theorem}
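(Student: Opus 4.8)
My plan is to deal first with the easy half — the isomorphism with the quantization's automorphism group — and then spend the effort on the structure of $G$. The first half is immediate: it is Theorem~\ref{teo main} restricted to a single object of the groupoids $\PIso(\CC[V/\Gamma])\cong\Iso(\CC[V/\Gamma])$. For the rest I would fix the model $B_{0,c}=\CC[x,y,z]/(xy-P(z))$, with $P=P_c\in\CC[z]$ monic of degree $n$ and with vanishing coefficient in degree $n-1$, and Poisson bracket $\{z,x\}=-x$, $\{z,y\}=y$, $\{x,y\}=P'(z)$; under $\mathfrak{M}\cong\hh/W$ the parameter is the zero-sum multiset of roots of $P$, and I call $c$ \emph{special} if this multiset is stable under $a\mapsto -a$ and \emph{generic} otherwise. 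Write $G=\mathrm{PAut}(B_{0,c})$, with $n>2$ throughout.

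Next I would exhibit generators. The torus $T=\{\theta_\mu:x\mapsto\mu x,\ y\mapsto\mu^{-1}y,\ z\mapsto z\mid\mu\in\CC^\times\}$ always acts by Poisson automorphisms. For $g\in\CC[x]$ the map $\sigma_g:x\mapsto x$, $z\mapsto z+xg(x)$, $y\mapsto y+x^{-1}(P(z+xg(x))-P(z))$ is a well-defined algebra endomorphism (the last term is a polynomial, since $P(z+xg(x))-P(z)$ is divisible by $x$), and a short check of the three generating brackets — using that $x$ is a nonzerodivisor in the domain $B_{0,c}$ — shows it is a Poisson automorphism; $g\mapsto\sigma_g$ is then an injection $(\CC[x],+)\hookrightarrow G$ with image a subgroup $U_x$, and symmetrically one has $U_y\cong(\CC[y],+)$ given by $\tau_h:y\mapsto y$, $z\mapsto z+yh(y)$, $x\mapsto x+y^{-1}(P(z+yh(y))-P(z))$. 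Conjugation by $\theta_\mu$ carries $\sigma_g$ to $\sigma_{g'}$ with $g'(x)=\mu g(\mu x)$, so $T$ normalizes both $U_x$ and $U_y$; this produces $G_1:=U_x\rtimes T\cong\CC[x]\rtimes\CC^\times$ and $G_2:=U_y\rtimes T\cong\CC[y]\rtimes\CC^\times$, and one checks directly that $G_1\cap G_2=T$. When $c$ is special there is in addition the automorphism $\Omega:x\mapsto y$, $y\mapsto(-1)^n x$, $z\mapsto -z$ — the sign is what makes $(xy-P(z))$ invariant, since a zero-sum $(-1)$-symmetric multiset forces $P(-z)=(-1)^nP(z)$ — and one computes $\Omega^2=\theta_{(-1)^n}$, $\Omega T\Omega^{-1}=T$ by inversion, and $\Omega U_x\Omega^{-1}=U_y$.

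The core step is that these generate $G$ and that the resulting presentations are honestly amalgamated. The key rigidity lemma I would prove is: \emph{if $\sigma\in G$ carries the fibration $x\colon\operatorname{Spec}B_{0,c}\to\AA^1$ to itself up to a base automorphism, then $\sigma\in G_1$}. One first shows $\sigma(x)\in\CC^\times x$ (a constant shift is ruled out because $z/x\notin B_{0,c}$ when $n\ge2$), so after composing with $T$ one may take $\sigma(x)=x$; then $\{\sigma(z)-z,x\}=0$, which in $B_{0,c}[x^{-1}]=\CC[x^{\pm1},z]$ forces $\sigma(z)=z+g(x)$ with $g\in\CC[x]$; and for $\sigma(y)$ to exist one needs $P(z+g(x))\in xB_{0,c}$, which forces $g(0)=0$ because a zero-sum multiset admits no nonzero translational symmetry — hence $\sigma\in U_x$. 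Symmetrically, preserving the $y$-fibration forces $\sigma\in G_2$, and interchanging the two fibrations forces $\sigma\in G_2\Omega$, possible only in the special case. To reduce an arbitrary $\sigma\in G$ to one of these, I would use that $\operatorname{Spec}B_{0,c}$ is one of the surfaces $\{xy=P(z)\}$ studied by Makar-Limanov \cite{makar-limanovGroupsAutomorphismsClass1990} and Blanc \cite{blancAutomorphismsPreFibered2011}, whose only $\AA^1$-fibrations up to the base are those given by $x$ and $y$, and run the Makar-Limanov/van der Kulk complexity argument: assign to $\sigma$ the pair of degrees of $\sigma(x)$ and $\sigma(y)$ in the $\CC[x]$- and $\CC[y]$-filtrations of $B_{0,c}$, and show that unless $\sigma$ already stabilizes or swaps the two fibrations, composing on the left with a suitable element of $U_x$ or $U_y$ strictly decreases it. Equivalently, $G$ acts on the Bass--Serre tree of the two fibrations with vertex stabilizers $G_1,G_2$ and edge stabilizer $T$ (by the rigidity lemma), minimally and without inversions in the generic case; Bass--Serre theory then yields $G\cong G_1\ast_T G_2=(\CC[x]\rtimes\CC^\times)\ast_{\CC^\times}(\CC[y]\rtimes\CC^\times)$, which is (iii). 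In the special case $\Omega$ identifies $U_x$ with $U_y$, so $G=\langle G_2,T,\Omega\rangle=G_2\ast_T\langle T,\Omega\rangle$, where $\langle T,\Omega\rangle=\langle\CC^\times,\Omega\mid\Omega^2=\theta_{(-1)^n},\ \lambda\Omega=\Omega\lambda^{-1}\rangle$ equals $\CC^\times\rtimes\ZZ/2\ZZ$ for $n$ even and the group $H$ for $n$ odd — giving (i) and (ii). The restriction $n>2$ is essential here: for $n=2$ the surface acquires an extra $\ss\ll_2$-symmetry that enlarges $U_x$ and $U_y$, and the analysis of \cite{naurazbekovaAutomorphismsSimpleQuotients2021} applies instead.

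The main obstacle is this reduction step. The group of all algebra automorphisms of $\{xy=P(z)\}$ is vastly larger than $\langle T,U_x,U_y,\Omega\rangle$ — it contains a full de Jonquières-type subgroup — so the Poisson condition must be exploited in an essential way; it is exactly the rigidity lemma that collapses the fibration stabilizers to $U_\bullet\rtimes T$, and combining this with the tree/complexity argument, which is itself delicate because $\{x=0\}$ and $\{y=0\}$ are reducible as soon as $P$ has more than one distinct root, is where the genuine work lies.
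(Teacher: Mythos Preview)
Your approach is correct in outline and genuinely different from the paper's. The paper does not work directly on the Poisson side: it first lifts Bavula--Jordan's generators of $\Aut(A(P))$ to $\CC[t]$-linear automorphisms of the family $A_t(P)$, proves the amalgamated free product structure for the resulting group $G_t$ via an explicit \emph{multidegree} argument (tracking the degrees of the images of $x,y,z$ under alternating words and showing they grow strictly), and then specializes to $t=0$ and $t=1$ simultaneously. The equality $G_0=\PAut(\Aa(P))$ is established separately, not by a rigidity lemma on fibration stabilizers but by invoking Makar--Limanov's full list of generators of the \emph{affine} automorphism group of $\{xy=P(z)\}$, writing an arbitrary automorphism as (non-Poisson factor)$\circ$(element of $G_0$), and checking case by case which of the finitely many residual factors $V,R_\nu,S_\mu,\ldots$ are Poisson.

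What each route buys: the paper's detour through $A_t(P)$ yields the isomorphism $\Aut(A(P))\cong\PAut(\Aa(P))$ essentially for free, since both are specializations of the same abstract group $G_t$, and the multidegree argument is entirely elementary. Your approach is more intrinsic --- it never touches the quantization and replaces the multidegree bookkeeping by geometry of fibrations and Bass--Serre theory --- but you then need the first half of the statement as a separate input (you invoke Theorem~\ref{teo main}, which in the paper's logic is downstream of exactly this computation). Your rigidity lemma is correct, and the terse justification ``$z/x\notin B_{0,c}$'' is valid once unpacked: in $B_{0,c}[x^{-1}]\cong\CC[x^{\pm1},z]$ the equation $\{\sigma(z),x\}=x+b$ forces a $bz/x$ term in $\sigma(z)$, which lies in $B_{0,c}$ only if $P(z)\mid z$. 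The one place your sketch is loose is the Bass--Serre step: to get a tree you still need that $G_1,G_2$ generate $G$ and that alternating words are nontrivial, which is precisely the ``complexity argument'' you allude to --- and that argument, made explicit, is essentially the paper's multidegree computation in different clothing.
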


In type $\mathbf{D}$, the affine automorphism group of the undeformed Kleinian singularity was computed by Blanc in \cite{blancNonrationalityFibrationsAssociated2015}. To our knowledge, the affine isomorphisms between deformations were unknown; we compute them in Theorem \ref{teo iso Dn}. We are then able to check which ones are Poisson, directly confirming Conjecture \ref{conj main}. In particular, we can give an explicit presentation of the groups of Poisson automorphisms of the deformations.

\begin{theorem}
    Let $n\geq4$. The group $G$ of Poisson automorphisms of a deformation of a $\mathbf{D_{n}}$-Kleinian singularity is isomorphic to the group of automorphisms of the corresponding quantization. The dependence of $G$ on the deformation parameter splits into three classes, one for generic, one for sub-generic and one for special parameter (see Theorem \ref{teo iso quantizations type D} for more details):
    \begin{enumerate}[(i)]
        \item for generic deformation parameter, $G=\langle \id \rangle$;
        \item for $n>4$ and special deformation parameter, $G=\ZZ/2\ZZ$;
        \item for $n=4$ and special deformation parameter, $G=S_3$, the symmetric group on $3$ elements;
        \item for $n=4$ and sub-generic deformation parameter, $G=\ZZ/2\ZZ$.
    \end{enumerate}
\end{theorem}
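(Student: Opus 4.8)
The plan is to use the explicit presentations of the deformations $B_{0,c}$ and the quantizations $B_{1,c}$ of the $\mathbf{D}_n$ Kleinian singularity (Levy \cite{levyIsomorphismProblemsNoncommutative2009}, Boddington \cite{boddingtonDeformationsTypeKleinian2007}) and to proceed in three stages: (1) classify \emph{all} affine isomorphisms between the $B_{0,c}$, i.e. establish Theorem~\ref{teo iso Dn}; (2) extract from these the Poisson isomorphisms and match them with the (already known) isomorphisms between the $B_{1,c}$; (3) read off $G=\mathrm{Aut}_{\mathrm{Poiss}}(B_{0,c})$ as a point stabilizer and verify the four cases. Throughout I would work with the semiclassical limit written as $\CC[\mathbf{D}_n]=\CC[x,y,z]/(x^2+y^2z+z^{n-1})$, quasi-homogeneous of weights $(n-1,\,n-2,\,2)$ in $(x,y,z)$, and carrying the Jacobian Poisson bracket (with $\{y,z\}$, $\{z,x\}$, $\{x,y\}$ proportional to $\partial_xF,\partial_yF,\partial_zF$); each $B_{0,c}$ then inherits a compatible filtration with $\gr B_{0,c}\cong\CC[\mathbf{D}_n]$ as graded Poisson algebras, and the same bookkeeping governs the $B_{1,c}$.

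\textbf{Stage 1.} A filtered isomorphism $\varphi\colon B_{0,c}\to B_{0,c'}$ induces a graded Poisson automorphism of $\CC[\mathbf{D}_n]$, and these are known from Blanc's computation \cite{blancNonrationalityFibrationsAssociated2015} of the automorphism group of the undeformed singularity: a torus together with the sign involution $y\mapsto -y$, and, only when $n=4$, the exceptional $S_3$ arising because then $y$ and $z$ have equal weight. This forces $\varphi$ to be affine. For $n>4$ the element $z$ is, up to scalar, the unique generator of minimal degree, so $z\mapsto \alpha z+\beta$; then $x$ and $y$ are determined up to a sign and a polynomial correction in $z$ by the relation $x^2=-y^2z-z^{n-1}+(\text{lower order terms})$. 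Imposing that the resulting candidate substitution actually intertwines the deformed relations of $B_{0,c}$ and $B_{0,c'}$ reduces to a finite system of polynomial equations in the coefficients and in $c,c'$; solving it yields Theorem~\ref{teo iso Dn}. The outcome is an action of the diagram-automorphism group $\Theta$ of $D_n$ — with $\Theta\cong\ZZ/2\ZZ$ for $n>4$ and $\Theta\cong S_3$ for $n=4$ — on the moduli space $\hh/W$ such that $B_{0,c}\cong B_{0,c'}$ as filtered commutative algebras if and only if $c'\in\Theta\cdot c$.

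\textbf{Stages 2 and 3.} An affine isomorphism rescales the Jacobian bracket by the Jacobian of the underlying coordinate change; requiring this factor to equal $1$ is a further constraint that rigidifies the isomorphism — in particular it collapses the torus of rescalings to a finite group — but, I claim, does not shrink the orbits on $\hh/W$: whenever $B_{0,c}\cong B_{0,c'}$ there is already a Poisson such isomorphism. Comparing these orbits with Levy's solution \cite{levyIsomorphismProblemsNoncommutative2009} of the isomorphism problem for the $B_{1,c}$, which is governed by the very same $\Theta$-action on $\hh/W$, gives the groupoid isomorphism $\PIso\cong\Iso$ (this is Theorem~\ref{teo iso type D}), hence Conjecture~\ref{conj main} in type $\mathbf{D}$ and, restricting to a single object, $\mathrm{Aut}(B_{1,c})\cong\mathrm{Aut}_{\mathrm{Poiss}}(B_{0,c})=G$. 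Finally $G$ is the stabilizer $\Theta_c$: for generic $c$ it is trivial, giving (i); at the unique $\Theta$-fixed point it is all of $\Theta$, i.e. $\ZZ/2\ZZ$ for $n>4$ (case (ii)) and $S_3$ for $n=4$ (case (iii)); and for $n=4$ there is an intermediate stratum on which $\Theta_c$ is a transposition subgroup $\cong\ZZ/2\ZZ$ (case (iv)). One then checks that these strata are exactly as listed — in particular that no continuous automorphism survives the Poisson condition, so that $G$ is genuinely finite, that for $n=4$ the sub-generic locus is non-empty, and that the special locus is a single point.

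\textbf{Main obstacle.} The crux is Stage 1, the classification of all affine isomorphisms of the deformed $\mathbf{D}_n$ singularities, which is the new ingredient flagged in the introduction. Two features make it delicate. First, unlike type $\mathbf{A}$ — where the automorphism group is an infinite amalgam containing ``wild'' (de Jonqui\`eres-type) elements — here the answer is finite, so one must genuinely \emph{prove} the absence of wild automorphisms; the filtration argument pinning down $z$ for $n>4$ does this, but it has to be made to work in the presence of the deformation terms and the polynomial corrections they allow. Second, the case $n=4$ is substantially harder: because $y$ and $z$ have equal weight the admissible substitutions mix them, the relevant symmetry is $S_3$ rather than $\ZZ/2\ZZ$, and tracking the deformed relations through these substitutions is the heaviest computation of the argument. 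Once Stage 1 is in place, the Poisson selection and the stratification come down to solving explicit low-degree polynomial systems and comparing with the classification in \cite{levyIsomorphismProblemsNoncommutative2009}.
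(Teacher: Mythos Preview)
Your overall architecture matches the paper's: first classify all affine isomorphisms between the deformed $\mathbf{D}_n$ surfaces (Theorem~\ref{teo iso Dn}), then select the Poisson ones and compare with Levy's classification on the quantization side. Stages 2 and 3 are essentially what the paper does.

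The gap is in Stage 1. You write that ``a filtered isomorphism $\varphi$ induces a graded Poisson automorphism of $\CC[\mathbf{D}_n]$'' and that ``for $n>4$ the element $z$ is, up to scalar, the unique generator of minimal degree, so $z\mapsto\alpha z+\beta$''. But this presupposes that $\varphi$ \emph{is} filtered; for an arbitrary affine isomorphism there is no a priori reason $\varphi(z)$ should have bounded degree, and the uniqueness of $z$ in the associated graded tells you nothing about where a non-filtered map sends it. You correctly flag ``absence of wild automorphisms'' as the main obstacle, but the filtration argument you sketch cannot furnish this: it only describes filtered isomorphisms once you already know those are all there are. (Contrast type~$\mathbf{A}$, where non-filtered triangular automorphisms really do exist.)

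The paper's method for Stage 1 is entirely different and genuinely geometric. Each $\Dd_n(Q,\gamma)$ is compactified to a projective surface $X_n(Q,\gamma)$ inside a $\PP^2$-bundle over $\PP^1$, with boundary $C_n\cup F_+\cup F_-$ consisting of three smooth rational curves meeting transversally at one point, with self-intersections $3-n,\,-1,\,-1$. An affine isomorphism extends to a birational map of the $X_n$'s; resolving it by blow-ups and analysing intersection numbers on the boundary (Proposition~\ref{Prop Blackbox}, following Blanc's technique in \cite{blancNonrationalityFibrationsAssociated2015}) shows that no contraction of a boundary component is possible, so the birational map is already biregular. This is what rules out wild automorphisms. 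Only \emph{after} this is established does a filtration argument (with a different filtration, $\deg x=0$, $\deg y=\deg z=1$) and Makar-Limanov's description of $\Aut(\CC[x,y,z]/(xy^2-z^2))$ pin down the explicit form of the isomorphisms. Your proposal is missing this geometric step, and nothing in the filtration argument can substitute for it.
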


Even though the results are uniform, the methods used to prove them are quite different: algebraic for type $\mathbf{A}$, geometric for type $\mathbf{D}$. Additionally, in type $\mathbf{A}$ we compute the automorphism groups first, and then use that to get the whole isomorphism groupoid, while in type $\mathbf{D}$ we are not able to compute the automorphism groups directly, but we need to compute the whole isomorphism groupoid first. 

Recall that a Kleinian singularity $V/\Gamma$ is a symplectic quotient, thus the results of the present paper can be seen in the more general context of symplectic singularities (in the sense of Beauville \cite{beauvilleSymplecticSingularities2000}). In \cite{namikawaPoissonDeformationsAffine2010}, Namikawa constructed a Cartan space $\mathfrak{P}$ and Weyl group $W$ associated to a conic symplectic singularity $X$, with $W$ acting on $\mathfrak{P}$ by crystallographic reflections, and proved that the deformations of $\CC[X]$ are parameterized by $\mathfrak{P}/W$. Recently, Losev proved that the same is true for the quantizations of $\CC[X]$ (see \cite[Theorem 3.4]{losevDeformationsSymplecticSingularities2022}). The reductive part of the group of graded Poisson algebra isomorphisms of $\CC[X]$ acts on the space of deformations and quantizations as filtered (Poisson) algebra isomorphisms. By a general result of Losev \cite[Proposition 3.21 and Corollary 3.22]{losevDeformationsSymplecticSingularities2022}, these two actions coincide and induce all the possible filtered (Poisson) algebra isomorphisms between different deformations or quantizations. Theorem \ref{teo main} can be restated as saying that, for the special case of type $\mathbf{A}$ and $\mathbf{D}$ Kleinian singularities, all (Poisson) isomorphisms come from the action of an automorphism of $\CC[X]$ on the moduli space. What we show also in the present paper is that in type $\mathbf{D}$ all (Poisson) isomorphisms are filtered, while in type $\mathbf{A}$ the non-filtered (Poisson) isomorphisms come from the exponentiation of nilpotent inner derivations. 

Notice that the associated graded of each of the quantizations of a Kleinian singularity $X$ is not the corresponding deformation, but the algebra $\CC[X]$ itself. Nonetheless, there is a different way to realize a deformation as the semi-classical limit of the corresponding quantization. For each symplectic quotient, Etingof and Ginzburg \cite{etingofSymplecticReflectionAlgebras2002} constructed a symplectic reflection algebra $H_{t,c}$, for all $t\in\CC$ and all $c$ in a certain vector space of dimension $\dim \mathfrak{P}$. They are defined as quotients of the smash-product $\Tt(V^*)\rtimes\Gamma$, where $\Tt(V^*)$ is the tensor algebra of $V^*$. Let $e$ be the idempotent element in $\Gamma$; the algebra $eH_{t,c}e$ is called the spherical subalgebra of $H_{t,c}$. 
A result by Bellamy \cite{bellamyCountingResolutionsSymplectic2016} shows that every filtered Poisson deformation of $\CC[V]^\Gamma$ is of the form $eH_{0,c}e$, for some $c$. In \cite{losevDeformationsSymplecticSingularities2022}, it is proved that every filtered quantization $\CC[V]^\Gamma$ is of the form $eH_{1,c}e$, for some $c$; we can thus regard every deformation as the semi-classical limit of $eH_{t,c}e$, as $t$ goes to $0$. Unfortunately, this does not seem to allow us to directly compare the automorphisms of the two objects.

The paper is structured as follows. In Section \ref{section 2} we study the case of type $\mathbf{A}$ singularities. We prove that the group of automorphisms of every quantization admits an amalgamated free product structure (Theorem \ref{teo amalgamated free product}). We compute the group of Poisson automorphisms of every deformation, using techniques similar to \cite{naurazbekovaAutomorphismsSimpleQuotients2021}, and prove that it is isomorphic to the automorphism group of the corresponding deformation (Theorem \ref{teo poisson automorphisms type A}). Using these results we are then able to confirm Conjecture \ref{conj main} (Theorem \ref{teo iso type A}) in type $\mathbf{A}$. In Section \ref{section 3} we study the case of type $\mathbf{D}$ singularities. Using techniques similar to \cite{blancNonrationalityFibrationsAssociated2015}, we manage to compute all the isomorphisms between deformations as affine varieties (Theorem \ref{teo iso Dn}); we then compute the subgroupoid of Poisson isomorphisms and confirm Conjecture \ref{conj main} (Theorem \ref{teo iso type D}).

\paragraph{Acknowledgments.} I am grateful for the constant encouragement, guidance and support provided by my supervisors Daniele Valeri and Gwyn Bellamy, which made this work possible. I would also like to thank Franco Rota for the many helpful conversations, and for patiently answering my questions. This paper was supported by the EPSRC (Engineering and Physical Sciences Research Council) via a postgraduate scholarship. 

\section{Type A}\label{section 2}
\renewcommand{\thetheorem}{\arabic{theorem}}
\setcounter{theorem}{0}
\numberwithin{theorem}{section}

Let $V$ be a complex vector space of dimension $2$. Choose a basis for $V$, and let $X,Y$ be the corresponding coordinate functions. We then identify $SL(V)$ with $SL_2(\CC)$. Take $\Gamma=C_n\subset SL_2(\CC)$ the cyclic group of order $n$, with $n\geq 2$, acting on $V$ via matrices of the form
\begin{equation*}
    \begin{pmatrix}
e^{2k\pi i/n } & 0\\
0 & e^{-2k\pi i/n }
\end{pmatrix}, \hspace{4mm} \text{with $1\leq k\leq n$}\, .    
\end{equation*}

The quotient $V/\Gamma$ is the Kleinian singularity of type $\mathbf{A_{n-1}}$. It is well-known that the algebra of functions $\CC[V/\Gamma]=\CC[V]^\Gamma$ is generated by the monomials $X^n,Y^n$ and $XY$, so we have
\begin{equation*}
    \CC[\mathbf{A_{n-1}}]=\CC[x,y,z]/(xy-z^n)\, ,
\end{equation*}
for all $n\geq 2$. Considering $X,Y$ to be of degree $1$, we have a grading on $\CC[\mathbf{A_{n-1}}]$ given by $\deg x=\deg y=n$ and $\deg z=2$. We also have a Poisson structure on $\CC[\mathbf{A_{n-1}}]$, induced by the symplectic structure on $V$; we choose here the normalization that gives the bracket $\{Y,X\}=1/n$ on $\CC[V]$. The induced Poisson structure on $\CC[\mathbf{A_{n-1}}]$ is then 

\begin{equation*}
    \{x,y\}=-nz^{n-1}\, , \hspace{5mm} \{z,x\}=x \hspace{5mm} \{z,y\}=-y\, .
\end{equation*}
This gives $\CC[\mathbf{A_{n-1}}]$ the structure of a graded Poisson algebra, with Poisson bracket of degree $-2$. 

\begin{oss}\label{oss generale poisson structure on C[x,y,z]}
    Let $\psi$ be a polynomial in $x,y,z$. It induces a Poisson structure on $\CC[x,y,z]$, defined by
    \begin{equation*}
        \{x,y\}=\pdv{\psi}{z}, \ \ \ \{x,z\}=-\pdv{\psi}{y}, \ \ \ \{y,z\}=\pdv{\psi}{x}.
    \end{equation*}
    Since the ideal $(\psi)\subset\CC[x,y,z]$ is Poisson, $\psi$ induces a Poisson structure on the quotient $\CC[x,y,z]/(\psi)$. If additionally $\psi$ is homogeneous with respect to some grading of $\CC[x,y,z]$, then $\CC[x,y,z]$ and $\CC[x,y,z]/(\psi)$ are graded Poisson algebras, and the Poisson bracket has degree $(\deg \psi-\deg x-\deg y-\deg z)$. The Poisson structure on $\CC[\mathbf{A_{n-1}}]$ is of this type, where $\psi=xy-z^n$.
\end{oss}

\subsection{Deformations and quantizations}

We have explicit presentations for both deformations and quantizations of the algebra $\CC[\mathbf{A_{n-1}}]$. In both cases, they are parameterised by a monic polynomial $P\in\CC[z]$ of degree $n$, with no degree $n-1$ term. 

Fix $n\geq2$ and denote by $\Aa(P)$ the deformation associated to the parameter $P$ of the algebra $\CC[\mathbf{A_{n-1}}]$. Explicitly, we have
\begin{equation*}
    \Aa(P)=\CC[x,y,z]/(xy-P(z)).
\end{equation*}
These algebras come from specializing the semi universal deformation of the Kleinian singularity (see for example \cite[Table 3]{katzGorensteinThreefoldSingularities1992}).
We give $\Aa(P)$ the structure of a Poisson algebra as in Remark \ref{oss generale poisson structure on C[x,y,z]}
\begin{equation*}
    \{x,y\}=\pdv{\psi}{z}=-P'(z), \ \ \ \{z,x\}=\pdv{\psi}{y}=x, \ \ \ \{z,y\}=-\pdv{\psi}{x}=-y.
\end{equation*}
The algebra $\Aa(P)$ is a filtered Poisson algebra, with filtration induced by the degree on the generators $\deg x=\deg y=n$ and $\deg z=2$, with Poisson bracket of degree $-2$, i.e. $\{F_k,F_m\}\subset F_{k+m-2}$ for all filtration terms $F_k,F_m$.

Quantizations of $\CC[\mathbf{A_{n-1}}]$ are examples of generalized Weyl algebras \cite{bavulaIsomorphismProblemsGroups2001}. We recall here the general definition.

\begin{defi}
Let $D$ be a ring, $\sigma$ an automorphism of $D$, and $a$ a central element of $D$. The generalized Weyl algebra $D(\sigma,a)$ is the ring extension of $D$ generated by two indeterminates $x,y$ subject to the relations
\begin{enumerate}
    \item $xd=\sigma(d)x$ and $yd=\sigma^{-1}(d)y$ for all $d\in D$,
    \item $xy=\sigma(a)$ and $yx=a$.
\end{enumerate}
\end{defi}

In our case, the quantization associated to the parameter $P$ is the generalized Weyl algebra 
\begin{equation*}
    A(P):=\CC[z](\sigma,P),
\end{equation*}
with $\sigma$ defined by $z\mapsto z-1$. Explicitly, $A(P)$ is the $\CC$-algebra generated by $x,y,z$ subject to the relations
\begin{equation}\label{eq defining A(P)}
    xz=(z-1)x, \ \ yz=(z+1)y, \ \ xy=P(z-1), \ \ yx=P(z). 
\end{equation}
The algebra $A(P)$ is a filtered associative algebra, with filtration induced by $\deg x=\deg y=n$ and $\deg z=2$. The commutator is of degree $-2$, i.e. $[F_k,F_m]\subset F_{k+m-2}$ for all filtration terms $F_k,F_m$.

We can give an explicit construction of the algebra $\Aa(P)$ as the semi-classical limit of $A(P)$. Define 
\begin{equation*}
    A_t(P):=\CC[t,z](\sigma_t,P),
\end{equation*}
with $\sigma_t$ defined by $z\mapsto z-t$, $ t\mapsto t$. Explicitly, $A_t(P)$ is the $\CC[t]$-algebra generated by $x,y,z$ subject to the following relations:
\begin{equation}\label{eq defining A_t(P)}
    xz=(z-t)x, \ \ yz=(z+t)y, \ \ xy=P(z-t), \ \ yx=P(z). 
\end{equation}
\begin{oss}
    The defining relations of $A(P)$ and $A_t(P)$ imply respectively that
    \begin{equation*}
        [x,y]=(\sigma-1)(P), \ \ \  \ \ \ [x,y]=(\sigma_t-1)(P).
    \end{equation*}
    Define for all $m\in\NN$ the operators $\delta_m:=(\sigma^m-1)$ and $\delta_{m,t}:=(\sigma_t^m-1)$, so that 
    \begin{equation*}
        [x,y]=\delta_1(P) \ \ \  \ \ \ [x,y]=\delta_{1,t}(P),
    \end{equation*}
    respectively in $A(P)$ and $A_t(P)$.
\end{oss}

Denote by $A_1(P):=A_t(P)/((t-1)A_t(P))$ and by $A_0(P):=A_t(P)/(tA_t(P))$.
\begin{prop}\label{prop semi-classical limit type A}
    The algebras $A_1(P) $ and $ A(P)$ are isomorphic as filtered associative algebras. The algebra $A_0$ has the structure of a filtered Poisson algebra, with Poisson bracket given by $\{\cdot,\cdot\}=\frac{1}{t}[\cdot, \cdot]$ mod $(t)$. Moreover, the algebras $A_0(P) $ and $ \Aa(P)$ are isomorphic as filtered Poisson algebras.
\end{prop}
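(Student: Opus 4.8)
The plan is to verify the three assertions in order, all by elementary manipulations with the generalized Weyl algebra relations \eqref{eq defining A_t(P)}.

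First I would treat $A_1(P)=A_t(P)/((t-1)A_t(P))$. Setting $t=1$ in \eqref{eq defining A_t(P)} literally reproduces the defining relations \eqref{eq defining A(P)} of $A(P)$, so the assignment $x\mapsto x$, $y\mapsto y$, $z\mapsto z$ extends to a surjective algebra homomorphism $A(P)\to A_1(P)$. For injectivity I would use the PBW-type basis of a generalized Weyl algebra: $A_t(P)$ is a free $\CC[t]$-module with basis $\{z^j x^i\}_{i\geq 1,j\geq 0}\cup\{z^j\}_{j\geq 0}\cup\{z^j y^i\}_{i\geq 1,j\geq 0}$ (this is standard for $D(\sigma,a)$ with $D$ a domain), hence $A_1(P)$ has the analogous $\CC$-basis, and so does $A(P)$; the homomorphism sends basis to basis and is therefore an isomorphism. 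It respects filtrations since the degrees of $x,y,z$ are the same on both sides.

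Next, for $A_0(P)=A_t(P)/(tA_t(P))$: setting $t=0$ in \eqref{eq defining A_t(P)} gives $xz=zx$, $yz=zy$, $xy=P(z)=yx$, so $A_0(P)$ is commutative, and the PBW basis argument shows it is the free commutative algebra on $x,y,z$ modulo $xy-P(z)$, i.e. $A_0(P)\cong\CC[x,y,z]/(xy-P(z))$ as a commutative algebra. For the Poisson bracket I need that $[A_t(P),A_t(P)]\subseteq tA_t(P)$, which follows from the three basic commutators $[x,z]=-tx$, $[y,z]=ty$, $[x,y]=\delta_{1,t}(P)=P(z-t)-P(z)\in t\CC[t,z]$; then $\{a,b\}:=\tfrac1t[\tilde a,\tilde b]\bmod (t)$ is well-defined, and the Leibniz rule and Jacobi identity descend from the associative algebra. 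Computing the three generating brackets: $\tfrac1t[x,z]=-x$, $\tfrac1t[y,z]=y$, and $\tfrac1t[x,y]=\tfrac1t(P(z-t)-P(z))\equiv -P'(z)\bmod(t)$. Matching against the Poisson structure on $\Aa(P)$, namely $\{x,y\}=-P'(z)$, $\{z,x\}=x$, $\{z,y\}=-y$, we see the commutative isomorphism $A_0(P)\cong\Aa(P)$ is in fact a Poisson isomorphism, and it is filtered for the same degree reasons as before.

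The only genuine subtlety — the step I would be most careful about — is the flatness/freeness of $A_t(P)$ as a $\CC[t]$-module, i.e. that the PBW basis really is a $\CC[t]$-basis with no collapse upon specializing $t\mapsto 0$ or $t\mapsto 1$. This is where one must invoke the general structure theory of generalized Weyl algebras $D(\sigma,a)$ over a Noetherian domain $D$ (here $D=\CC[t,z]$): the ordered monomials form a free $D$-basis, so in particular $A_t(P)$ is free over $\CC[t]\subseteq D$, hence the specializations behave as expected. Granting this, everything else is a direct computation with the four relations, and the filtered statements are immediate because the grading data $\deg x=\deg y=n$, $\deg z=2$ is identical in $A_t(P)$, $A(P)$ and $\Aa(P)$.
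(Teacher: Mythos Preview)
Your proposal is correct and follows essentially the same route as the paper: verify $A_1(P)\cong A(P)$ by specializing $t=1$, observe that $[A_t(P),A_t(P)]\subset tA_t(P)$ together with freeness of $A_t(P)$ over $\CC[t]$ makes $A_0(P)$ commutative with a well-defined Poisson bracket, and then check the bracket on generators via $\tfrac{1}{t}(P(z-t)-P(z))\equiv -P'(z)\bmod(t)$. The paper's proof is terser---it calls the first isomorphism ``trivial'' and simply asserts freeness---whereas you spell out the PBW-basis justification; this extra care is welcome but does not constitute a different approach.
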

\begin{proof}
    The isomorphism $A_1(P)\cong A(P)$ is trivial.
    
    Notice that $[A_t(P),A_t(P)]\subset tA_t(P)$ and $A_t(P)$ is free over $\CC[t]$, so $A_0(P)$ is commutative and $\{\cdot,\cdot\}=\frac{1}{t}[\cdot, \cdot]$ mod $(t)$ defines a Poisson bracket. We can see that it coincides with the one of $\Aa(P)$: the only non-trivial check is
    \begin{equation*}
        \frac{1}{t}[x,y] \! \! \mod (t)=\frac{P(z-t)-P(z)}{t}\!\!\mod (t)=-P'(z) \!\!\mod (t).
    \end{equation*}
\end{proof}

\begin{oss}
    If we instead take the associated graded of either $\Aa(P)$ or $A(P)$, we get back the algebra of functions on the Kleinian singularity $\CC[\mathbf{A_{n-1}}]$. 
\end{oss}

We introduce here the following notation, that will be used throughout the rest of the paper. If $\phi$ is an endomorphism of an algebra with three generators $x,y,z$, we will identify $\phi$ with the triple $(\phi(x),\phi(y),\phi(z))$ of its value on the generators $(x,y,z)$.

\begin{theorem}[{\cite[Theorem 3.9]{bavulaIsomorphismProblemsGroups2001}}]\label{teo automorfismi quantizzazione type A}
    The group of automorphisms of the algebra $A(P)$ has the following generators:
    \begin{enumerate}[(a)]
        \item $$\Phi_{\lambda,m}=\left(x+\sum_{i=1}^n{\frac{(-\lambda)^i}{i!}y^{im-1}\delta_m^i(P)}, y, z+m\lambda y^m \right),$$
        for every $\l\in\CC$ and $m\in\NN$;
        \item $$\Psi_{\lambda,m}=\left(x, y+\sum_{i=1}^n{\frac{\lambda^i}{i!}\delta_m^i(P)x^{im-1}}, z-m\lambda x^m\right),$$
        for every $\l\in\CC$ and $m\in\NN$;
        \item $$\Theta_\nu=(\nu x, \nu^{-1} y, z),$$
        for all $\nu\in\CC^\times$;
        \item $$\Omega=(y, (-1)^n x, 1-z), $$
        only if the polynomial $P$ is either odd or even, i.e. if $P(-z)=\pm P(z)$.
    \end{enumerate}
    
\end{theorem}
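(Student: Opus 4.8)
The plan is to verify directly that each of the listed maps $\Phi_{\lambda,m}$, $\Psi_{\lambda,m}$, $\Theta_\nu$, $\Omega$ is a well-defined algebra automorphism of $A(P)$, and then to argue that they generate the whole group. For the first part, the strategy is: given a candidate triple $(\phi(x),\phi(y),\phi(z))$, one must check that these three elements of $A(P)$ satisfy the defining relations \eqref{eq defining A(P)}, namely $\phi(x)\phi(z)=(\phi(z)-1)\phi(x)$, $\phi(y)\phi(z)=(\phi(z)+1)\phi(y)$, $\phi(x)\phi(y)=P(\phi(z)-1)$, $\phi(y)\phi(x)=P(\phi(z))$. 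The universal property of the generalized Weyl algebra presentation then gives an endomorphism; invertibility follows by exhibiting the inverse, which for $\Phi_{\lambda,m}$ is $\Phi_{-\lambda,m}$, for $\Psi_{\lambda,m}$ is $\Psi_{-\lambda,m}$, for $\Theta_\nu$ is $\Theta_{\nu^{-1}}$, and for $\Omega$ is (up to sign) $\Omega$ itself. The key computational identities are the operator relations for $\delta_m=\sigma^m-1$ on $\CC[z]$ — in particular how $\delta_m$ interacts with the shift $z\mapsto z+m\lambda y^m$ — together with the commutation rules $x^k z = (z-k)x^k$ and $y^k z = (z+k)y^k$, which let one move all $z$'s to one side. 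The apparently mysterious finite sums in $\Phi_{\lambda,m}$ and $\Psi_{\lambda,m}$ are exactly the "truncated exponential" corrections needed so that the cubic relation $\phi(x)\phi(y)=P(\phi(z)-1)$ holds on the nose; the sum terminates at $i=n$ because $\deg P=n$ and $\delta_m$ lowers degree by (a positive multiple of) $m$, so $\delta_m^{i}(P)=0$ for $i>n$.

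For the generation statement, the argument is a degree/leading-term reduction. Given an arbitrary automorphism $\phi$, one examines the image $\phi(z)$ and, using that $\phi$ must preserve the filtration up to the available rescalings (since $\gr A(P)=\CC[\mathbf{A_{n-1}}]$ and the graded Poisson automorphisms of that are classically known), one shows $\phi(z)$ has a controlled form; composing with a suitable $\Phi_{\lambda,m}$ or $\Psi_{\lambda,m}$ strictly decreases a complexity invariant (the degree of $\phi(z)$ as a polynomial in $x$ or $y$ after localizing), so after finitely many steps $\phi(z)$ becomes $z$ up to a scalar. Once $\phi(z)=z$, the relations $xz=(z-1)x$, $yz=(z+1)y$ force $\phi(x)$ and $\phi(y)$ to be scalar multiples of $x$ and $y$ (one uses that $z$ generates a maximal commutative subalgebra and the eigenspace decomposition under $\mathrm{ad}(z)$ is exactly $\bigoplus_k x^k\CC[z]\oplus\bigoplus_k y^k\CC[z]$), and the relation $\phi(x)\phi(y)=P(z-1)$ pins the scalars down to $\Theta_\nu$, with the extra symmetry $\Omega$ arising precisely when $P(-z)=\pm P(z)$ allows the swap $x\leftrightarrow y$, $z\leftrightarrow 1-z$.

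The main obstacle I expect is the reduction step in the generation argument: controlling how a general automorphism acts on $z$ and showing that the "tame" generators suffice to reduce it. This is where one needs either a careful analysis inside the quotient division ring (the generalized Weyl algebra $A(P)$ embeds in an Ore localization, and one can argue there) or an adaptation of Makar-Limanov/Dixmier-type arguments on filtered algebras whose associated graded is $\CC[x,y,z]/(xy-z^n)$. The verification of the relations in part (a)–(d), by contrast, is routine once the right operator identity for $\delta_m$ under the substitution $z\mapsto z+m\lambda y^m$ is isolated; I would state that identity as a lemma and check it by induction on the exponent. Since this theorem is quoted from \cite[Theorem 3.9]{bavulaIsomorphismProblemsGroups2001}, in the paper itself one may simply cite it, but the proof sketch above indicates how it is established.
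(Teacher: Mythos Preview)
The paper does not prove this theorem at all: it is stated with a citation to \cite[Theorem 3.9]{bavulaIsomorphismProblemsGroups2001} and used as input for the rest of Section~2. So there is no ``paper's own proof'' to compare your proposal against; you have correctly identified this yourself in your final sentence.

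That said, your sketch is broadly in line with how Bavula and Jordan actually argue. A few small points. First, the automorphisms $\Phi_{\lambda,m}$ and $\Psi_{\lambda,m}$ are most cleanly seen to be well-defined not by checking the relations directly on the triple, but by observing (as the paper recalls in the remark immediately following the theorem) that they equal $\exp(\lambda\,\mathrm{ad}(y^m))$ and $\exp(\lambda\,\mathrm{ad}(x^m))$ respectively, using that $\mathrm{ad}(x^m)$ and $\mathrm{ad}(y^m)$ are locally nilpotent derivations; this bypasses the relation-checking and makes invertibility automatic. Second, your remark that $\delta_m$ ``lowers degree by a positive multiple of $m$'' is not quite right: $\delta_m=\sigma^m-1$ lowers the $z$-degree by exactly $1$ regardless of $m$, so $\delta_m^i(P)=0$ for $i>n$ simply because $\deg P=n$. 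Third, in the reduction step your description of the endgame is slightly off: once the non-linear part is stripped away one does not necessarily land at $\phi(z)=z$, but rather at a linear automorphism, and it is at that stage that one sees either $\phi(z)=z$ (giving $\Theta_\nu$) or, when $P$ is reflective, $\phi(z)=1-z$ (giving $\Omega$ composed with some $\Theta_\nu$). These are minor corrections to an otherwise reasonable outline.
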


\begin{oss}
    A polynomial of degree $n$ is called reflective if there exists a $\rho\in\CC$ such that $P(\rho-x)=(-1)^nP(x)$. If $P$ is reflective, then $A(P)$ has an automorphism of the form $\Omega_\rho=(y, (-1)^n x, 1+\rho-z)$ \cite[Lemma 3.8]{bavulaIsomorphismProblemsGroups2001}. Notice though that if we restrict ourselves to polynomials with no term of degree $n-1$, $P$ can be reflective only if $\rho=0$. In that case, the polynomial $P$ is either odd or even and $\Omega_\rho=\Omega$. For brevity and to stick with the original notation of Bavula and Jordan, we will write ``$P$ is reflective'' instead of ``$P$ is either odd or even''. 
\end{oss}

\begin{oss}
    We can define the automorphisms of type $\Phi$ and $\Psi$ in a different way. For all $m\geq0$, consider $\ad(x^m)$ and $\ad(y^m)$, the adjoint actions of $x^m$ and $y^m$. These are nilpotent the derivations of $A(P)$ (see \cite[Lemma 3.4]{bavulaIsomorphismProblemsGroups2001}). If we exponentiate them, we get automorphisms of $A(P)$. In fact, 
    \begin{equation*}
        e^{\l\ad(x^m)}=\Phi_{\l,m} \text{ and } e^{\l\ad(y^m)}=\Psi_{\l,m} 
    \end{equation*}
    (see \cite[Lemma 3.4]{bavulaIsomorphismProblemsGroups2001}).
\end{oss}

We can lift the generators of $\Aut(A(P))$ to the algebra $A_t(P)$. Define the following $\CC[t]$-linear automorphisms of $A_t(P)$:
\begin{enumerate}[(a)]
    \item \begin{equation*}
        \Phi_{\l,m,t}:=e^{\l/t\ad(x^m)}=\left(x+\sum_{i=1}^n{\frac{(-\lambda)^i}{t^ii!}y^{im-1}\delta_{m,t}^i(P)}, y, z+m\lambda y^m \right),
    \end{equation*} 
    for all $\l\in\CC$ and $m\geq0$;
    \item \begin{equation*}
        \Psi_{\l,m,t}:=e^{\l/t\ad(y^m)}=\left(x, y+\sum_{i=1}^n{\frac{\lambda^i}{t^ii!}\delta_{m,t}^i(P)x^{im-1}}, z-m\lambda x^m\right),
    \end{equation*}
   for all $\l\in\CC$ and $m\geq0$;
    \item $\Theta_{\nu,t}=(\nu x, \nu^{-1} y, z)$ for all $\nu\in\CC^\times$;
    \item $\Omega_t=(y, (-1)^n x, t-z)$, only if $P$ is reflective.
\end{enumerate}

We can also consider how they act on the quotient $A_0$. Using that $\{\cdot,\cdot\}=\frac{1}{t}[\cdot, \cdot] \mod (t)$, we have
\begin{equation*}
    e^{\l\ad(x^m)}=\Phi_{\l,m,0} \text{ and } e^{\l\ad(y^m)}=\Psi_{\l,m.0},
\end{equation*}
where now ``$\ad$'' denotes the adjoint action with respect to the Poisson bracket.

Denote by $G_t$ the group generated by the automorphisms (a)-(d) of $A_t(P)$. Since the automorphisms in $G_t$ are $t$-linear, $G_t$ acts on the algebras $A_0(P)$ and $A_1(P)$. Denote the images of $G_t$ in these representations as $G_0$ and $G_1$ respectively. By Theorem \ref{teo automorfismi quantizzazione type A},
\begin{equation}\label{eq G_1=Aut}
    G_1=\Aut(A(P)).
\end{equation}
We also have
\begin{equation}\label{eq G_0 in PAut}
    G_0\leq \PAut(\Aa(P)).
\end{equation}
In fact, let $\phi\in G_0$ and $a,b\in\Aa(P)\cong A_0(P)$. Then
\begin{equation*}
    \phi(\{a,b\})=\frac{1}{t}\phi([\hat{a},\hat{b}]) \!\! \mod (t)=\frac{1}{t}[\phi(\hat{a}),\phi(\hat{b})] \!\! \mod (t)= \{\phi(a),\phi(b)\},
\end{equation*}
where $\hat{a},\hat{b}$ are some lifts of $a,b$ in $A_t(P)$.

\subsection{Amalgamated free product group structure}\label{section amalgamated free product}

The goal of this section is to prove Theorem \ref{teo amalgamated free product}, which gives us the explicit group structure of $G_t(P)$ as an abstract group.

The following proposition gives us some useful identities.

\begin{lemma}\label{lemma commuting relations G_t}
    The following relations hold in $G_t$.
    \begin{align}
        &\Theta_{\nu,t}\circ\Theta_{\mu,t}=\Theta_{\mu+\nu,t},  \label{eq lemma commuting relations G_t -1} \\
        &\Omega_t^2=\Theta_{(-1)^n,t},\label{eq lemma commuting relations G_t 0}\\
        &\Theta_{\nu,t}\circ\Psi_{\lambda,m,t}=\Psi_{\l\nu^m,m,t}\circ\Theta_{\nu,t},\label{eq lemma commuting relations G_t 1}\\
        &\Theta_{\nu,t}\circ\Phi_{\lambda,m,t}=\Phi_{\l\nu^{-m},m,t}\circ\Theta_{\nu,t},\label{eq lemma commuting relations G_t 2}
    \end{align}
    for all $\nu\in\CC^\times$, $\l\in\CC$ and $m\in\NN$.
    
    If $P$ is reflective, we also have
    \begin{align}
        &\Omega_t\circ \Theta_{\nu,t}=\Theta_{\nu^{-1},t}\circ\Omega_t \label{eq lemma commuting relations G_t 3} \\
        &\Theta_{(-1)^n,t}\circ \Omega_t\circ\Phi_{\lambda,m,t}\circ\Omega_t=\Psi_{\lambda,m,t}, \label{eq lemma commuting relations G_t 4}   
    \end{align}
    for all $\nu\in\CC^\times$, $\l\in\CC$ and $m\in\NN$.
    
    In particular, if $P$ is reflective, $G_t$ can be generated by just the automorphisms $\Theta_{\nu,t}$, $\Phi_{\l,m,t}$ and $\Omega_t$.
\end{lemma}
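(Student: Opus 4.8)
The plan is to verify each identity \eqref{eq lemma commuting relations G_t -1}--\eqref{eq lemma commuting relations G_t 4} by direct computation on the three generators $x,y,z$, using the triple notation $\phi \leftrightarrow (\phi(x),\phi(y),\phi(z))$ introduced before Theorem \ref{teo automorfismi quantizzazione type A}, and then deduce the final ``in particular'' claim. The computations for \eqref{eq lemma commuting relations G_t -1} and \eqref{eq lemma commuting relations G_t 0} are immediate: $\Theta_{\nu,t}\circ\Theta_{\mu,t}=(\nu\mu x,(\nu\mu)^{-1}y,z)$, and applying $\Omega_t=(y,(-1)^nx,t-z)$ twice gives $x\mapsto y\mapsto (-1)^n x$, $y\mapsto (-1)^n x\mapsto (-1)^n y$, $z\mapsto t-z\mapsto t-(t-z)=z$, which is exactly $\Theta_{(-1)^n,t}$ (note that when $n$ is even this is the identity, consistent with $\Omega_t^2$). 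Care is needed with composition order: since I write $\phi$ as its action on generators and $\psi\circ\phi$ means ``first $\phi$, then $\psi$'', I must substitute the components of $\phi$ into the expressions defining $\psi$.

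For \eqref{eq lemma commuting relations G_t 1} and \eqref{eq lemma commuting relations G_t 2}, I would compare both sides on each generator. On the right, $\Theta_{\nu,t}$ sends $x\mapsto\nu x$, $y\mapsto\nu^{-1}y$, $z\mapsto z$; precomposing with $\Psi_{\lambda,m,t}$, the $y$-component involves $\sum_i \tfrac{\lambda^i}{t^i i!}\delta_{m,t}^i(P)\,x^{im-1}$, and since $\delta_{m,t}^i(P)$ is a polynomial in $z$ it is fixed by $\Theta_{\nu,t}$ while $x^{im-1}\mapsto \nu^{im-1}x^{im-1}$; multiplying through by the overall $\nu^{-1}$ from $\Theta_{\nu,t}(y)$ gives $\nu^{im-2}$... here I should be careful and instead compute $\Theta_{\nu,t}\circ\Psi_{\lambda,m,t}$ versus $\Psi_{\lambda\nu^m,m,t}\circ\Theta_{\nu,t}$ term by term, checking that the powers of $\nu$ match so that the parameter $\lambda$ gets rescaled to $\lambda\nu^m$; the $z$-component check is $z-m\lambda x^m \mapsto z-m\lambda\nu^m x^m$ on one side and $z-m(\lambda\nu^m)x^m$ on the other, which agree. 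The identity \eqref{eq lemma commuting relations G_t 2} is entirely analogous with $x\leftrightarrow y$ and $\nu\leftrightarrow\nu^{-1}$. Alternatively, and more conceptually, I could use that $\Psi_{\lambda,m,t}=\exp(\tfrac{\lambda}{t}\ad(y^m))$ together with $\Theta_{\nu,t}(y^m)=\nu^{-m}y^m$ and the general fact that $\theta\circ\exp(\ad a)\circ\theta^{-1}=\exp(\ad\theta(a))$ for any automorphism $\theta$; this immediately yields the conjugation formulas and also \eqref{eq lemma commuting relations G_t 4} from $\Omega_t(x^m)=y^m$ (up to the sign $(-1)^{nm}$, which is where the factor $\Theta_{(-1)^n,t}$ enters). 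I expect this conjugation argument to be the cleanest route, and I would present \eqref{eq lemma commuting relations G_t 1}--\eqref{eq lemma commuting relations G_t 4} that way, checking only the action on $x^m$, $y^m$ and $z$ rather than expanding the full series.

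For \eqref{eq lemma commuting relations G_t 3}, one computes $\Omega_t\circ\Theta_{\nu,t}$: $x\mapsto\nu x\mapsto\nu y$, $y\mapsto\nu^{-1}y\mapsto\nu^{-1}(-1)^nx$, $z\mapsto z\mapsto t-z$, which equals $\Theta_{\nu^{-1},t}\circ\Omega_t$ since the latter sends $x\mapsto y\mapsto\nu^{-1}y$... again I must track this carefully, but it reduces to matching powers of $\nu$. For \eqref{eq lemma commuting relations G_t 4}, using the exponential description and $\Omega_t\,\ad(x^m)\,\Omega_t^{-1}=\ad(\Omega_t(x^m))=\ad((-1)^{nm}y^m) = (-1)^{nm}\ad(y^m)$, we get $\Omega_t\circ\Phi_{\lambda,m,t}\circ\Omega_t^{-1}=\Psi_{(-1)^{nm}\lambda,m,t}$; since $\Omega_t^{-1}=\Theta_{(-1)^n,t}\circ\Omega_t$ by \eqref{eq lemma commuting relations G_t 0} (as $\Theta_{(-1)^n,t}^2=\mathrm{id}$), this rearranges to the stated form, with the sign $(-1)^{nm}$ absorbed because $\Psi$ with parameter $(-1)^{nm}\lambda$ and with parameter $\lambda$ differ by conjugation by $\Theta_{(-1)^n,t}$ when $m$ is odd — here I need to double-check the exact placement of the $\Theta_{(-1)^n,t}$ factor against Bavula--Jordan's conventions, and this bookkeeping of signs is the main obstacle, since an off-by-$(-1)^n$ error is easy to make. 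Finally, the ``in particular'' statement is immediate: \eqref{eq lemma commuting relations G_t 4} expresses every $\Psi_{\lambda,m,t}$ in terms of $\Phi_{\lambda,m,t}$, $\Omega_t$ and $\Theta_{(-1)^n,t}$ (the last being $\Omega_t^2$), so the generating set (a)--(d) can be reduced to $\{\Theta_{\nu,t},\Phi_{\lambda,m,t},\Omega_t\}$.
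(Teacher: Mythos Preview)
Your direct-check approach for \eqref{eq lemma commuting relations G_t -1}--\eqref{eq lemma commuting relations G_t 3} and the final ``in particular'' coincides with the paper's. The genuine divergence is at \eqref{eq lemma commuting relations G_t 4}. The paper verifies it by applying the left-hand side to each generator; the check on $y$ is the substantive one and requires commuting $x^{im-1}$ past $(\delta_{m,t}^iP)(t-z)$ via $\sigma_t$, together with an explicit appeal to reflectivity $P(-z)=(-1)^nP(z)$ to reduce $\sigma_t^{im-1}\bigl((\delta_{m,t}^iP)(t-z)\bigr)$ to $(-1)^{n+i}\delta_{m,t}^i(P)$. Your conjugation route $\theta\circ e^{\ad a}\circ\theta^{-1}=e^{\ad\theta(a)}$ avoids this computation entirely: reflectivity enters only through the prior fact that $\Omega_t$ is an automorphism, after which \eqref{eq lemma commuting relations G_t 4} follows from $\Omega_t(y^m)=(-1)^{nm}x^m$ and the sign bookkeeping you sketch using \eqref{eq lemma commuting relations G_t 0}--\eqref{eq lemma commuting relations G_t 2}. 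This is more conceptual and shorter, and it gives \eqref{eq lemma commuting relations G_t 1}--\eqref{eq lemma commuting relations G_t 2} for free as well.

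One caveat that would trip up your execution: the displayed exponential definitions of $\Phi_{\lambda,m,t}$ and $\Psi_{\lambda,m,t}$ in the paper have $x^m$ and $y^m$ swapped relative to the accompanying triple formulas (compare with \eqref{def Phi_g} and with the action formulas \eqref{eq adjoint action y^m}, \eqref{eq adjoint action x^m}); the correct identifications are $\Phi_{\lambda,m,t}=e^{(\lambda/t)\ad(y^m)}$ and $\Psi_{\lambda,m,t}=e^{(\lambda/t)\ad(x^m)}$. Your line ``$\Psi_{\lambda,m,t}=\exp(\tfrac{\lambda}{t}\ad(y^m))$'' follows the typo, and carried through literally would produce $\nu^{-m}$ rather than $\nu^{m}$ in \eqref{eq lemma commuting relations G_t 1}. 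With the corrected labels your conjugation argument works exactly as outlined, including the absorption of the $(-1)^{nm}$ via \eqref{eq lemma commuting relations G_t 1} that you flag as the delicate step.
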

\begin{proof}
    Equations \eqref{eq lemma commuting relations G_t -1} and \eqref{eq lemma commuting relations G_t 0} follow from a direct check.
    
    To prove \eqref{eq lemma commuting relations G_t 1}, compute
    \begin{align*}
        \Theta_{\nu,t}\circ\Psi_{\lambda,m,t}&=\left(\nu x,\nu^{-1} y+\sum_{i=1}^n\frac{\l^i}{t^ii!}\nu^{im-1}\delta_{m,t}^i(P)x^{im-1}, z-\nu^m m\l x^m\right)\\
    &=\Psi_{\l\nu^m,m,t}\circ\Theta_{\nu,t}.
    \end{align*}
    Similarly, for equation \eqref{eq lemma commuting relations G_t 2} 
    \begin{align*}
        \Theta_{\nu,t}\circ\Phi_{\lambda,m,t}&=\left(\nu x+\sum_{i=1}^n\frac{(-\l)^i}{t^ii!}\nu^{1-im}x^{im-1}\delta_{m,t}^i(P),\nu^{-1} y, z+\nu^{-m} m\l y^m\right)\\
    &=\Phi_{\l\nu^{-m},m,t}\circ\Theta_{\nu,t}.
    \end{align*}
    
    Equation \eqref{eq lemma commuting relations G_t 3} follows from a direct check
    \begin{align*}
    \Omega_t\circ\Theta_{\nu,t}=(\nu y, (-1)^n\nu^{-1} x,t-z)=\Theta_{\nu^{-1},t}\circ\Omega_t.
\end{align*}
    Equation \eqref{eq lemma commuting relations G_t 4} is more involved. Let us check it separately on each of the three generators. For $x$ and $z$ we have
    $$x\mapsto y\mapsto y\mapsto (-1)^nx\mapsto x, $$
$$z\mapsto t-z\mapsto t-z-m\lambda y^m\mapsto z-(-1)^{n\cdot m}\l mx^m\mapsto z-m\l x^m. $$
Let us now check it for $y$:
\begin{equation}\label{eq lemma commuting relations G_t check 4 }
    \begin{aligned}
        y&\mapsto (-1)^nx \mapsto (-1)^n\left[x+\sum_{i=1}^n{\frac{(-\lambda)^i}{t^ii!}y^{im-1}\delta_{m,t}^i(P)}\right] \\
        &\mapsto (-1)^n\left[y+\sum_{i=1}^n{\frac{(-\lambda)^i}{t^ii!}(-1)^{(im-1)n}x^{im-1}(\delta_{m,t}^iP)(t-z)}\right] \\
        &\mapsto y+(-1)^n\sum_{i=1}^n{\frac{(-\lambda)^i}{t^ii!}x^{im-1}(\delta_{m,t}^iP)(t-z)}.
    \end{aligned}
\end{equation}
The final term in \eqref{eq lemma commuting relations G_t check 4 } is similar to $\Psi_{\l,m,t}(y)$, but we need to move $x^{im-1}$ to the right of $(\delta_{m,t}^iP)(t-z)$. By relations \eqref{eq defining A_t(P)}, we can do that by applying $\sigma^{im-1}_t$ to $(\delta_{m,t}^iP)(t-z)$. Notice that
\begin{equation*}
    (\delta_{m,t}^iP)(t-z)=\sigma_t\circ\gamma\circ(\sigma^m_t-1)^i(P),
\end{equation*}
where $\gamma$ is the map $z\mapsto -z$. Hence
\begin{equation*}
    (\delta_{m,t}^iP)(t-z)=(-1)^n\sigma_t\circ(\sigma^{-m}_t-1)^i(P),
\end{equation*}
because $P$ is reflective. It follows that 
\begin{equation*}
    \begin{aligned}
        \sigma^{im-1}_t((\delta_{m,t}^iP)(t-z))&=(-1)^n\sigma_t^{im}\circ(\sigma^{-m}_t-1)^i(P)\\
        &=(-1)^{n}(1-\sigma^m)^i(P)\\
        &=(-1)^{n+i}\delta^i_{m,t}(P).
    \end{aligned}
\end{equation*}
Putting this together with \eqref{eq lemma commuting relations G_t check 4 } we get

\begin{equation*}
    \Theta_{(-1)^n,t}\circ \Omega_t\circ\Phi_{\lambda,m,t}\circ\Omega_t(y)=y+\sum_{i=1}^n{\frac{\lambda^i}{t^ii!}\delta_{m,t}^i(P)x^{im-1}}=\Psi_{\l,m,t}(y).
\end{equation*}

This completes the proof.
\end{proof}

Let us recall the definition of the amalgamated free product. Let $G$ be a group and $H,K$ two subgroups of $G$, and let $L:=H\cap K$. The group $G$ is the free product of the subgroups $H$ and $K$ with the amalgamated subgroup $L$, and is denoted by $G=H\ast_{L}K$, if
    \begin{enumerate}[(a)]
        \item $G$ is generated by the subgroups $H$ and $K$;
        \item the defining relations of $G$ consist only of the defining relations of the subgroups $H$ and $K$.
    \end{enumerate}

\begin{theorem}\label{teo tecnico amalgamated free product}
Let $H,K$ be subgroups of $G$, and $L=H\cap K$. If $S_1$ is a set of left coset representatives for $L$ in $H$ and $S_2$ is a set of left coset representatives for $L$ in $K$, then $G=H\ast_{L} K$ if and only if every element $g\in G$ can be written uniquely as
$$g=g_1\dotso g_k\alpha, $$
where $\alpha\in L$, $g_i\in S_1\cup S_2$ and $g_i,g_{i+1}$ do not belong in $S_1$ and $S_2$ at the same time, for all $i=1,\dotso,k$.
\end{theorem}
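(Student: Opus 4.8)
The plan is to reduce the statement to the classical normal-form theorem for amalgamated free products, which I would prove by van der Waerden's permutation trick. The first step is to record that the paper's definition of $G = H \ast_L K$ (conditions (a) and (b)) is equivalent to the universal property: for any group $M$ and homomorphisms $\phi\colon H \to M$ and $\psi\colon K \to M$ agreeing on $L$, there is a unique homomorphism $G \to M$ extending both. This is immediate, since condition (b) says precisely that a presentation of $G$ is obtained by gluing presentations of $H$ and $K$ along the common subgroup $L$.

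For the forward direction, assume $G = H \ast_L K$. \emph{Existence} of the normal form: by (a), any $g \in G$ is a product of elements lying alternately in $H$ and $K$; writing each $H$-factor as $s\alpha$ with $s \in S_1$, $\alpha \in L$, and each $K$-factor as $s\beta$ with $s \in S_2$, $\beta \in L$, one pushes every $L$-factor to the right (left multiplication by an element of $L$ permutes $S_1$, resp. $S_2$, up to an $L$-factor, since $L \leq H$ and $L \leq K$) and then merges any two adjacent factors in the same $S_i$ into one; each such merge either shortens the word or produces an $L$-factor that is absorbed to the right, so the process terminates in a reduced expression. \emph{Uniqueness}: let $W$ be the set of all normal forms $(g_1,\dots,g_k;\alpha)$. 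I would define an action of $H$ on $W$ by left multiplication followed by the reduction just described, and likewise an action of $K$ on $W$; one checks each is a well-defined homomorphism into $\mathrm{Sym}(W)$. These actions agree on $L$, so by the universal property they extend to a homomorphism $\Phi\colon G \to \mathrm{Sym}(W)$. Since $\Phi(g)$ applied to the empty form $(\,;1)$ returns the normal form of $g$, any two reduced expressions for the same $g$ must coincide.

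For the backward direction, let $\tilde G$ be the abstract amalgamated product, presented by gluing presentations of $H$ and $K$ along $L$; by construction it satisfies the paper's definition, so by the forward direction it has unique normal forms. The inclusions $H, K \hookrightarrow G$ agree on $L$ and hence induce a homomorphism $\pi\colon \tilde G \to G$, surjective because, by hypothesis, every element of $G$ has a normal form whose factors lie in $H \cup K$. Now $\pi$ sends a reduced word of $\tilde G$ to the corresponding product in $G$, which by the uniqueness hypothesis in $G$ \emph{is} the normal form of its image; so if $\pi(w) = 1$, mapping the $\tilde G$-normal form of $w$ across yields a normal form for $1 \in G$, forcing it to be the empty word, hence $w = 1$ by uniqueness in $\tilde G$. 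Thus $\pi$ is an isomorphism restricting to the identity on $H$ and on $K$, so $G = H \ast_L K$.

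The main obstacle is the uniqueness half of the forward direction — the van der Waerden argument — where one must verify that left multiplication on $W$ by an element of $H$ (resp. $K$), followed by reduction, is a genuine permutation of $W$ and that $h \mapsto (\text{this permutation})$ is a homomorphism; the real content sits in the bookkeeping around the leading factor $g_1$ (when the multiplier belongs to the same factor as $g_1$) and the trailing $L$-factor. The remaining steps are formal manipulations with words and the universal property.
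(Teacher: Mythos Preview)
Your proposal is correct and follows the classical normal-form argument via van der Waerden's permutation trick. The paper itself does not prove this theorem but merely cites \cite[Corollary 4.4.1]{solitarCombinatorialGroupTheory1976}; your sketch is essentially the proof given in that reference, so there is nothing to compare.
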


A proof of Theorem \ref{teo tecnico amalgamated free product} can be found in \cite[Corollary 4.4.1]{solitarCombinatorialGroupTheory1976}. 

We want to show that $G_t, G_0$ and $G_1$ have an amalgamated free product structure. We introduce the following notation.
\begin{enumerate}[(i)]
    \item $\Phi:=\langle \Phi_{\l,m,t} \,| \,\l\in\CC, m\in\NN\rangle$
    \item $\Psi:=\langle \Psi_{\l,m,t} \,| \,\l\in\CC, m\in\NN\rangle$
    \item $\Theta:=\langle \Theta_{\nu,t} \,| \,\nu\in\CC^\times\rangle$
\end{enumerate}

Let us first consider the case when $P$ is reflective. Define 
\begin{enumerate}[(a)]
    \item $T:=\langle \Phi, \Theta \rangle$;
    \item $J:=\langle \Omega_t, \Theta \rangle $.
\end{enumerate}
Lemma \ref{lemma commuting relations G_t} implies that $G_t$ is generated by $T$ and $J$. Clearly, $T\cap J=\Theta$.

\begin{oss}
    We have $T=\Phi\rtimes\Theta $. This follows at once from relation \eqref{eq lemma commuting relations G_t 1}. Thus, the elements of $\Phi$ form a set of left coset representatives for $\Theta$ in $T$. Notice that, due to the properties of the exponential, each automorphism in $\Phi$ has the form $\exp^{1/t\ad(g(y))}$, for some $g\in\CC[y]$. For consistency with the notation used in Section \ref{section Poisson automorphisms type A}, we define
    \begin{equation}\label{def Phi_g}
        \Phi_{g,t}:=\exp^{1/t\ad(\hat{g}(y))},
    \end{equation}
    where $\hat{g}:=\int^y_0g(t)dt$ denotes the antiderivative of $g$. From the properties of the exponential it follows that
    \begin{equation*}
        \Phi_{g,t}\circ\Phi_{h,t}=\Phi_{g+h,t} \hspace{8mm} \Phi_{0,t}=\id,
    \end{equation*}
    so $\Phi\cong\CC[y]$ as an additive group via the identification $g\mapsto\Phi_{g,t}$.   
    
    From relation \eqref{eq lemma commuting relations G_t 3} we can also see that $\{\Omega_t,\id\}$ is a set of left coset representatives for $\Theta$ in $J$.
\end{oss}

\begin{prop}\label{prop decomposition G_t reflective}
If $P$ is reflective, all $\phi\in G_t$ can be written in the form
\begin{equation}\label{eq decomposition G_t reflective}
\phi=\Phi_{g_1,t}\circ\Omega_t\circ\dots\circ\Omega_t\circ\Phi_{g_s,t}\circ \Theta_{\nu,t},
\end{equation}
where $s\geq0$, $g_i\in\CC[y]$ for all $i$ and $0\neq g_i$ for $1<i<s$ , and $\nu\in\CC^\times$.
\end{prop}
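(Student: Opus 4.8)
The plan is to reduce an arbitrary element of $G_t$ to the claimed shape by a normal-form argument based on the commutation relations of Lemma \ref{lemma commuting relations G_t}. First I would observe that, since $P$ is reflective, $G_t$ is generated by the two subgroups $T=\langle\Phi,\Theta\rangle$ and $J=\langle\Omega_t,\Theta\rangle$: relation \eqref{eq lemma commuting relations G_t 4} expresses every $\Psi_{\lambda,m,t}$ through $\Omega_t$, the $\Phi_{\lambda,m,t}$ and $\Theta$, so the three families $\Phi$, $\Theta$, $\Omega_t$ suffice. Since $T=\Phi\rtimes\Theta$, every element of $T$ equals $\Phi_{g,t}\circ\Theta_{\mu,t}$ for some $g\in\CC[y]$ and $\mu\in\CC^\times$; and since $\Omega_t^2=\Theta_{(-1)^n,t}\in\Theta$ by \eqref{eq lemma commuting relations G_t 0} and $\Omega_t$ normalizes $\Theta$ by \eqref{eq lemma commuting relations G_t 3}, every element of $J$ equals $\Omega_t^{\varepsilon}\circ\Theta_{\mu,t}$ with $\varepsilon\in\{0,1\}$. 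Consequently every $\phi\in G_t$ is a finite product of factors of these two kinds.

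Next I would sweep all $\Theta$-factors to the right. Relations \eqref{eq lemma commuting relations G_t 2} and \eqref{eq lemma commuting relations G_t 3} say exactly that a $\Theta$-factor can be pushed past a $\Phi$-factor or past $\Omega_t$ at the cost of changing the parameters, so all $\Theta$-factors can be moved to the end and, using \eqref{eq lemma commuting relations G_t -1}, collapsed into a single $\Theta_{\nu,t}$. This leaves $\phi=w\circ\Theta_{\nu,t}$ with $w$ a word in the letters $\Phi_{g,t}$ ($g\in\CC[y]$) and $\Omega_t$. Merging consecutive $\Phi$-letters via $\Phi_{g,t}\circ\Phi_{h,t}=\Phi_{g+h,t}$, eliminating consecutive $\Omega_t$'s via $\Omega_t^2=\Theta_{(-1)^n,t}$ (sweeping the freshly created $\Theta$-factor to the right again — note it commutes with $\Omega_t$, as $(-1)^{-n}=(-1)^n$, and conjugates each $\Phi_{g,t}$ to another element of $\Phi$), and padding the ends with $\Phi_{0,t}=\id$ if necessary, I obtain $\phi=\Phi_{g_1,t}\circ\Omega_t\circ\dots\circ\Omega_t\circ\Phi_{g_s,t}\circ\Theta_{\nu,t}$ for some $s\geq0$, $g_i\in\CC[y]$, $\nu\in\CC^\times$.

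Finally I would clear interior zeros. If $g_i=0$ with $1<i<s$, the interior subword $\Omega_t\circ\Phi_{0,t}\circ\Omega_t$ equals $\Omega_t^2=\Theta_{(-1)^n,t}$; sweeping this factor to the right — past the remaining $\Omega_t$'s, with which it commutes, and past each $\Phi_{g,t}$, which it conjugates to $\Phi_{g',t}$ with $g'=0$ precisely when $g=0$ (conjugation being a group automorphism of $\Phi$) — until it absorbs into $\Theta_{\nu,t}$, and then merging the two $\Phi$-letters that become adjacent, produces an expression of the same shape with two fewer $\Omega_t$'s. Since the number of $\Omega_t$-letters is a nonnegative integer that strictly drops at each such step, the process terminates and yields \eqref{eq decomposition G_t reflective} with $g_i\neq0$ for all $1<i<s$. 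I expect this last reduction to be the only delicate point: one must check that the $\Phi$-merges it triggers cannot create an endless supply of new interior zeros, which is guaranteed precisely because the $\Omega_t$-count monovariant decreases no matter what. Uniqueness of the expression is not part of this statement; it will follow, together with the amalgamated free product structure, from Theorem \ref{teo tecnico amalgamated free product} once this existence statement is in hand.
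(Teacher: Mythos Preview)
Your argument is correct and follows the same route as the paper: write $\phi$ as a word in $\Phi$, $\Omega_t$, $\Theta$ (using relation \eqref{eq lemma commuting relations G_t 4} to eliminate $\Psi$), push all $\Theta$-factors to the right via \eqref{eq lemma commuting relations G_t 2}--\eqref{eq lemma commuting relations G_t 3}, then collapse adjacent $\Phi$'s and $\Omega_t$'s. The paper's proof is terser and simply asserts the final reduction, whereas you spell out the interior-zero clearing and the termination argument via the $\Omega_t$-count monovariant; this extra care is welcome but not a different method.
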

\begin{proof}
By definition of $G_t$ we have 
\begin{equation}\label{eq proof decomposition G_t reflective}
    \phi=\phi_1\circ\dots\circ\phi_k,
\end{equation}
with $\phi_i$ either $\Phi_{g,t},\Omega_t$ or $\Theta_{\nu,t}$. From relations \eqref{eq lemma commuting relations G_t 1} and \eqref{eq lemma commuting relations G_t 3}, we can take every automorphism of type $\Theta_{\nu,t}$ to the right. Since $\Theta_{\nu,t}\circ \Theta_{\mu,t}=\Theta_{\nu\mu,t}$, $\Omega_t^2=\Theta_{(-1)^n,t}$ and $\Phi_g\circ\Phi_h=\Phi_{g+h}$, we can always rewrite \eqref{eq proof decomposition G_t reflective} as
$$\phi=\Phi_{g_1,t}\circ\Omega_t\circ\dots\circ\Omega_t\circ\Phi_{g_s,t}\circ \Theta_{\nu,t}, $$
with $s\geq0$, $g_i\in\CC[y]$ for all $i$ and $0\neq g_i$ for $1<i<s$ , and $\nu\in\CC^\times$.
\end{proof}

Let us now consider the case where $P$ is not reflective. Define 
\begin{enumerate}[(a)]
    \item $Q_1=\langle \Phi, \Theta\rangle$;
    \item $Q_2=\langle \Psi, \Theta\rangle$.
\end{enumerate}
By definition, $G_t$ is generated by $Q_1$ and $Q_2$. Clearly, $Q_1\cap Q_2=\Theta$. 

\begin{oss}
    We have $Q_1=\Phi\rtimes\Theta $ and $Q_2=\Phi\rtimes\Theta $. This follows at once from relation \eqref{eq lemma commuting relations G_t 2}. Thus, the elements of $\Phi$ (respectively $\Psi$) form a set of left coset representatives for $\Theta$ in $
    Q_1$ (respectively for $\Theta$ in $Q_2$). As in \eqref{def Phi_g} we can define
    \begin{equation*}
        \Psi_{g,t}:=\exp^{1/t\ad(\hat{g}(x))}.
    \end{equation*}
    This defines an isomorphism $\Psi\cong\CC[x]$ as additive groups via the map $g\mapsto \Psi_{g,t}$.
\end{oss}

\begin{prop}\label{prop decomposition G_t non reflective}
If $P$ is not reflective, every $\phi\in G_t$ can be written in the form
$$\phi=\Phi_{g_1,t}\circ\Psi_{h_1,t}\circ\dots\circ\Phi_{g_{s-1},t}\circ\Psi_{h_{s-1},t}\circ\Phi_{g_{s},t}\circ \Theta_{\nu,t}, $$
with $s\geq1$, $g_i\in\CC[y]$ for all $i$ with $0\neq g_i$ for $i=2,\dotso,s-1$, $0\neq h_i\in\CC[x]$ for all $i$, and $\nu\in\CC^\times$.
\end{prop}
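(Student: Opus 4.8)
The plan is to mimic the argument of Proposition \ref{prop decomposition G_t reflective}, replacing the role of $\Omega_t$ (which is not available in the non-reflective case) by alternating words in $\Phi$ and $\Psi$. By definition of $G_t$, any $\phi\in G_t$ is a finite composite $\phi=\phi_1\circ\dots\circ\phi_k$, where each $\phi_i$ lies in one of $\Phi$, $\Psi$, or $\Theta$. First I would push every factor of type $\Theta_{\nu,t}$ to the right: relations \eqref{eq lemma commuting relations G_t 1} and \eqref{eq lemma commuting relations G_t 2} let us commute $\Theta_{\nu,t}$ past any $\Psi_{\lambda,m,t}$ or $\Phi_{\lambda,m,t}$ at the cost of rescaling $\lambda$ (which does not change membership in $\Phi$ or $\Psi$), and relation \eqref{eq lemma commuting relations G_t -1} collapses all the accumulated $\Theta$'s into a single $\Theta_{\nu,t}$ at the far right. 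After this step $\phi=w\circ\Theta_{\nu,t}$ where $w$ is a word in the generators of $\Phi$ and $\Psi$ only.

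Next I would normalize the word $w$. Using $\Phi_{g,t}\circ\Phi_{h,t}=\Phi_{g+h,t}$ and $\Psi_{g,t}\circ\Psi_{h,t}=\Psi_{g+h,t}$ (which follow from the exponential identities recorded in the remarks preceding the statement, together with the identifications $\Phi\cong\CC[y]$ and $\Psi\cong\CC[x]$), any two adjacent factors from the same subgroup can be merged. Iterating, $w$ becomes a strictly alternating word in $\Phi$ and $\Psi$. To match the precise shape claimed — namely starting and ending with a $\Phi$-factor — I would observe that if the alternating word begins with a $\Psi$-factor we may prepend $\Phi_{0,t}=\id$, and if it ends with a $\Psi$-factor we may append $\Phi_{0,t}=\id$; this forces the form $\Phi_{g_1,t}\circ\Psi_{h_1,t}\circ\dots\circ\Psi_{h_{s-1},t}\circ\Phi_{g_s,t}$ with $s\geq1$. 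The only factors that are allowed to be $\id$ (i.e.\ $g_i=0$ or $h_i=0$) are the two outermost $\Phi$-factors $\Phi_{g_1,t}$ and $\Phi_{g_s,t}$: all interior $\Phi$-factors $g_2,\dots,g_{s-1}$ and all $\Psi$-factors $h_1,\dots,h_{s-1}$ must be nonzero, since otherwise the two neighbouring same-type factors would not actually be separated and could be merged, contradicting that the reduction has terminated.

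I do not expect this proposition itself to present a serious obstacle — it is the straightforward ``every element has a normal form'' half of the amalgamated-product description, and all the commutation relations needed are already in place from Lemma \ref{lemma commuting relations G_t} and the structural remarks $Q_1=\Phi\rtimes\Theta$, $Q_2=\Psi\rtimes\Theta$. The one point requiring a little care is the bookkeeping of which factors are permitted to be trivial: one should phrase the reduction as an algorithm that repeatedly (i) merges adjacent same-type factors and (ii) deletes interior trivial factors (which then creates a new adjacency of same-type factors to merge), and argue it terminates because the word length strictly decreases at each non-final step. The genuinely hard part of the overall program is not this existence statement but the \emph{uniqueness} of such an expression — that will require invoking Theorem \ref{teo tecnico amalgamated free product}, i.e.\ verifying that $G_t=Q_1\ast_{\Theta}Q_2$, which is presumably the content of the next result after this proposition.
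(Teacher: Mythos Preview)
Your proposal is correct and follows exactly the approach the paper takes: the paper's own proof is a one-line reference to Proposition~\ref{prop decomposition G_t reflective}, citing relations \eqref{eq lemma commuting relations G_t -1}, \eqref{eq lemma commuting relations G_t 1} and \eqref{eq lemma commuting relations G_t 2}, and you have simply written out those steps in full (push $\Theta$'s right, merge adjacent same-type factors, pad with $\Phi_{0,t}$ at the ends). Your bookkeeping on which factors may be trivial is also correct and matches the statement.
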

\begin{proof}
    The proof is analogous to that of Proposition \ref{prop decomposition G_t reflective}, and it is a simple application of the relations \eqref{eq lemma commuting relations G_t -1}, \eqref{eq lemma commuting relations G_t 1} and \eqref{eq lemma commuting relations G_t 2}.
\end{proof}

We are left to prove that the decompositions of Propositions \ref{prop decomposition G_t reflective} and \ref{prop decomposition G_t non reflective} are unique. To do that, we introduce the notion of multidegree of an automorphism. For all $a\in A_t(P)$, we define $\deg(a)$ as the smallest natural number $i$ such that $a\in F_i$, with $\deg(0)=-\infty$. For every automorphism $\phi$ in $G_t$, we define its multidegree to be:
\begin{equation}
    \mdeg(\phi)=(\deg(\phi(x)),\deg(\phi(y)),\deg(\phi(z))).
\end{equation}

\begin{lemma}\label{lemma degree}
    For all $a,b\in A_t(P)$, $\deg(ab)=\deg(a)+\deg(b)$.
\end{lemma}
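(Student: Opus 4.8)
The plan is to reduce the statement to the fact that the associated graded algebra $\gr A_t(P)$ of the filtered algebra $A_t(P)$ is a domain (or, equivalently, to work with the leading terms directly in $A_t(P)$). First I would recall from the defining relations \eqref{eq defining A_t(P)} that $A_t(P)$ is a free $\CC[t]$-module with basis $\{x^i z^k, y^j z^k : i,j \geq 1,\ k \geq 0\} \cup \{z^k : k \geq 0\}$; this is the standard PBW-type basis for a generalized Weyl algebra. With respect to the filtration induced by $\deg x = \deg y = n$, $\deg z = 2$ (and $\deg t = 0$), the relations $xz - (z-t)x = [x,z]$ and $xy - P(z-t)$, $yx - P(z)$ all have the property that the commutators/differences have strictly lower degree than the leading monomials $xz$, $xy$, $yx$ respectively (since $[x,z]$ has degree $n$ while $xz$ has degree $n+2$, and $xy - P(z-t)=0$ identifies a degree-$2n$ element with a degree-$\leq 2n$ polynomial whose top term $z^n$ matches). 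Consequently the associated graded $\gr A_t(P)$ is the commutative algebra $\CC[t][x,y,z]/(xy - c z^n)$ for the appropriate leading coefficient $c$, which is an integral domain.

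The key step is then: for $a, b \in A_t(P)$ nonzero with leading homogeneous components $\bar a, \bar b \in \gr A_t(P)$ of degrees $\deg(a), \deg(b)$, the product $\bar a \bar b$ is the image of $ab$ in degree $\deg(a) + \deg(b)$, and since $\gr A_t(P)$ is a domain we have $\bar a \bar b \neq 0$, hence $ab \neq 0$ and $\deg(ab) = \deg(a) + \deg(b)$. The remaining case $a = 0$ or $b = 0$ is handled by the convention $\deg(0) = -\infty$ together with $-\infty + k = -\infty$. So the whole lemma comes down to verifying that $\gr A_t(P)$ is a domain, for which it suffices to exhibit the PBW basis above and check that the leading terms of the four defining relations generate an ideal whose quotient is $\CC[t][x,y,z]/(xy-cz^n)$ (using that no degree $n-1$ term appears in $P$, so the leading term of $P(z)$ is exactly $z^n$ up to scalar).

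The main obstacle — though it is more of a bookkeeping task than a genuine difficulty — is establishing the PBW basis for $A_t(P)$ and confirming that the filtration behaves as claimed, i.e. that reducing an arbitrary word in $x, y, z, t$ to normal form using \eqref{eq defining A_t(P)} never increases the degree and that the top-degree part is unaffected by the correction terms. Alternatively one can cite the standard structure theory of generalized Weyl algebras (e.g. the references to Bavula–Jordan already in the paper) for the basis, and then the degree bookkeeping is immediate since each rewriting rule trades a monomial for one of the same or lower degree with the same leading term. Once the domain property of $\gr A_t(P)$ is in hand, the conclusion $\deg(ab) = \deg(a) + \deg(b)$ is a one-line consequence.
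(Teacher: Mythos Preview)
Your approach is correct and essentially identical to the paper's: both arguments pass to the associated graded $\gr A_t(P)\cong\CC[t,x,y,z]/(xy-z^n)$ (note $c=1$ since $P$ is monic), observe that this is a domain, and conclude that leading terms multiply nonzero, hence degrees add. The paper simply asserts the isomorphism $\gr A_t(P)\cong\CC[t,x,y,z]/(xy-z^n)$ without the PBW discussion you sketch, but otherwise the proofs coincide.
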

\begin{proof}
    For $a\in A_t(P)$, let $\gr(a):=a+F_{i-1}\in\gr(A_t(P))$, where $i=\deg(a)$. Clearly, $i=\deg(a)=\deg(\gr(a))$. Since $\gr(A_t(P))\cong\CC[t,x,y,z]/(xy-z^n)$ is a domain, $$\deg(\gr(a)\gr(b))=\deg(\gr(a))+\deg(\gr(b))$$ for all $a,b\in A_t(P)$. Thus
    \begin{equation*}
        \deg(ab)=\deg(\gr(ab))=\deg(\gr(a)\gr(b))=\deg(a)+\deg(b).
    \end{equation*}
\end{proof}

\begin{lemma}\label{lemma multidegree}
Let $g\in\CC[x]$ be a polynomial of degree $k$, and $\phi=(v_1,v_2,v_3)$ be an automorphism in $G_t$, such that $deg(v_2)>\deg(v_3)$ and $\deg(v_2)\geq\deg(v_1)$. Then $\phi\circ\Phi_{g,t}$ has multidegree
$$((nk+n-1)\deg(v_2), \deg(v_2), (k+1)\deg(v_2) ).  $$

Similarly, if $\deg(v_1)>\deg(v_3)$ and $\deg(v_1)\geq\deg(v_2)$, then $\phi\circ\Psi_{g,t}$ has multidegree 
$$(\deg(v_1), (nk+n-1)\deg(v_1), (k+1)\deg(v_1)). $$
\end{lemma}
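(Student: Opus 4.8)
The plan is to compute the three components of $\mdeg(\phi\circ\Phi_{g,t})$ directly, using Lemma \ref{lemma degree} (multiplicativity of $\deg$) as the main tool, and paying attention to possible cancellations among leading terms. Recall that $\Phi_{g,t}=\exp^{1/t\,\ad(\hat g(y))}$ where $\hat g$ has degree $k+1$, so by the explicit formula for $\Phi_{\l,m,t}$ extended additively we have $\Phi_{g,t}=(x+\xi,\,y,\,z+\eta)$ for some $\xi,\eta\in A_t(P)$. First I would pin down $\deg(\xi)$ and $\deg(\eta)$: since $\hat g(y)$ has degree $(k+1)n$ in the filtration (as $\deg y=n$), the term $z+\eta$ is essentially $z+\{\text{scalar}\}\cdot g(y)$-type expression, so $\deg(z+\eta)=(k+1)n$ whenever $k\geq 1$ (and $=2$ if $k=0$, but in the intended application $g\ne 0$ forces the stated formula to still read correctly since then $(k+1)n\geq n\geq 2$). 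For $\xi$, the highest term comes from the $i=n$-fold (or rather the appropriate power) summand $\frac{(-1)^i}{t^i i!}y^{?}\delta^i(P)$; tracking the $y$-powers and the fact that $\delta^i_{m,t}(P)$ drops degree, one finds $\deg(x+\xi)=(nk+n-1)n = n\bigl((k+1)n-1\bigr)$ — which is exactly $(nk+n-1)\deg y$ once we substitute $\deg y = n$. The key point, to be verified, is that the leading term of $\xi$ does not cancel: this holds because $\gr(A_t(P))\cong\CC[t,x,y,z]/(xy-z^n)$ is a domain and $\ad(\hat g(y))$ acting on $x$ produces a nonzero leading symbol.

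Next I would feed these into the composition. Writing $\phi=(v_1,v_2,v_3)$, we get
\begin{equation*}
\phi\circ\Phi_{g,t}=\bigl(v_1 + \xi(v_1,v_2,v_3),\; v_2,\; v_3 + \eta(v_1,v_2,v_3)\bigr).
\end{equation*}
The middle component is simply $v_2$, giving the claimed $\deg v_2$ entry immediately. For the third component, $\eta$ is a polynomial in $y$ of degree $k+1$ (up to the additive $z$), so by Lemma \ref{lemma degree} the dominant contribution is $\deg\bigl(v_2^{\,k+1}\bigr)=(k+1)\deg v_2$; the hypothesis $\deg v_2>\deg v_3$ ensures the $v_3$ summand and lower-order terms of $\eta$ cannot compete, and again the domain property of $\gr(A_t(P))$ prevents cancellation of the leading symbol $v_2^{k+1}$. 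For the first component, $\xi$ evaluated at $(v_1,v_2,v_3)$ has leading term of $y$-degree $nk+n-1$ (coming from the $y^{im-1}$ factors combined with $\delta^i(P)$, whose total $z$-degree plus $y$-degree works out to $nk+n-1$ copies of a degree-$n$ generator), hence $\deg\bigl(\xi(v_1,v_2,v_3)\bigr)=(nk+n-1)\deg v_2$; the hypothesis $\deg v_2\geq\deg v_1$ guarantees this dominates the $v_1$ summand, so $\deg(v_1+\xi(v_1,v_2,v_3))=(nk+n-1)\deg v_2$. The second assertion, for $\phi\circ\Psi_{g,t}$, follows by the symmetric argument with the roles of $x$ and $y$ interchanged (equivalently, by applying the first part after conjugating by a suitable reflection-type map when $P$ is reflective, or just repeating the computation verbatim).

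The main obstacle I anticipate is the bookkeeping of the leading-term degree of $\xi$ and $\eta$ — i.e.\ confirming that the exponential series $\exp^{1/t\,\ad(\hat g(y))}$ has its top-degree contribution exactly where the stated formula predicts, and crucially that this contribution survives (no cancellation). The no-cancellation part is where the domain property of $\gr(A_t(P))$ and Lemma \ref{lemma degree} do the real work: each iterated bracket $\ad(\hat g(y))^i(x)$ has a well-defined nonzero leading symbol, and distinct powers of $1/t$ cannot cancel since $t$ is a free variable, so the sum $x+\xi$ has leading symbol equal to that of the top nonzero summand. Once that is established the rest is a routine degree count using $\deg(y^a\,\delta^i(P)) = an + \deg(\delta^i(P))$ and the fact that $\delta_{m,t}$ lowers $z$-degree by at least $1$ but, after multiplying by the $y$-power factors dictated by the formula, the net degree in the filtration stabilizes at $(nk+n-1)n$.
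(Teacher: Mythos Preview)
Your overall strategy coincides with the paper's: compute $\Phi_{g,t}=(x+\xi,\,y,\,z+\eta)$ explicitly, then read off the three degrees after applying $\phi$, using Lemma~\ref{lemma degree}. Your treatment of the $y$- and $z$-components is correct and matches the paper's.

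There is, however, a genuine gap in your argument for the $x$-component. You identify the top monomial of $\xi$ as $y^{nk+n-1}$ and then assert $\deg(\phi(\xi))=(nk+n-1)\deg v_2$. But this inference is not valid: the ``leading symbol'' of $\xi$ is taken with respect to the filtration in which $\deg y=n$ and $\deg z=2$, whereas after substitution the relevant weights become $\deg v_2$ and $\deg v_3$. A monomial $y^iz^j$ with smaller filtration degree than $y^{nk+n-1}$ could, upon substitution, have $i\deg v_2+j\deg v_3>(nk+n-1)\deg v_2$ if its $z$-degree $j$ is large enough; the hypothesis $\deg v_2>\deg v_3$ alone does not rule this out. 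What is actually needed is a simultaneous bound on both exponents: each monomial $y^iz^j$ occurring in $\ad^s(\hat g(y))(x)$ satisfies $i\le s(k+1)-1$ \emph{and} $j\le n-s$. With this, $i\deg v_2+j\deg v_3\le (s(k+1)-1)\deg v_2+(n-s)\deg v_3$, and maximising over $s$ (using $\deg v_2>\deg v_3$) gives the claim. The paper establishes exactly this two-sided bound by an explicit induction on $s$, starting from $[\hat g(y),x]$ and $[\hat g(y),y^mz^r]$; this is the step your sketch is missing.

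Separately, your proposed non-cancellation mechanism --- ``distinct powers of $1/t$ cannot cancel'' --- does not work. Each iterated bracket $\ad(\hat g(y))^s(x)$ carries a factor of $t^s$ (since each application of $\delta_{m,t}$ introduces a factor of $t$), which exactly cancels the $1/t^s$ in the exponential series; the summands therefore all live at $t^0$ in leading order and could in principle interact. The correct reason there is no cancellation is that the leading monomials $y^{s(k+1)-1}z^{n-s}$ for different $s$ are distinct (both exponents change with $s$), hence linearly independent --- but you still need the inductive argument above to know that \emph{these} are the leading monomials, and to bound the remaining ones.
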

\begin{proof}

Recall that $\Phi_{g,t}=e^{1/t\ad(\hat{g}(y))}$ \eqref{def Phi_g}. We have that 
\begin{equation}\label{eq adjoint action y^m}
    \ad(y^m): x\mapsto -y^{m-1}\delta_{m,t}(P), \ f(z)\mapsto -y^m\delta_{m,t}(f), \ y\mapsto 0
\end{equation}
for all $f\in\CC[z]$ (see \cite[Equation 8]{bavulaIsomorphismProblemsGroups2001}). This implies that $\Phi_{g,t}(y)=y$, hence $\deg(\phi\circ\Phi_{g,t}(y))=\deg(v_2)$.

Let $a y^{k+1}$ be the leading term of $\hat{g}$. From \eqref{eq adjoint action y^m}
\begin{equation*}
    \phi\circ\Phi_{g,t}(z)=v_3+(k+1)av_2^{k+1}+\text{lower terms in {$v_2$}. }
\end{equation*}
Since $\deg(v_2)\geq\deg(v_i)$ for $i=1,2,3$, from Lemma \ref{lemma degree}
\begin{equation*}
    \deg(\phi\circ\Phi_{g,t}(z))=(k+1)\deg(v_2).
\end{equation*}
 
From \eqref{eq adjoint action y^m} it follows that, for all $m,r\in\NN$
\begin{equation}\label{eq leading term 1}
   \relax [\hat{g}(y),y^mz^r]=\alpha t y^{k+m+1}z^{r-1}+\text{monomials proportional to $y^iz^j$,}
\end{equation}
with $i\leq k+m+1$, $j\leq r-1$ and $(i,j)\neq (k+m+1,r-1)$, for some $\alpha\in\CC^\times$. It also follows that
\begin{equation}\label{eq leading term 2}
    \relax [\hat{g}(y),x]=\beta t y^{k}z^{n-1}+\text{monomials proportional to $y^iz^j$,}
\end{equation}
with $i\leq k$, $j\leq n-1$ and $(i,j)\neq (k,n-1)$, for some $\beta\in\CC^\times$.

We now want to prove, by induction on $s$, that
\begin{equation}\label{eq lemma multidegree induction}
    \ad^s(\hat{g}(y))(x)=\alpha t^s y^{s(k+1)-1}z^{n-s}+\text{monomials proportional to $y^iz^j$,}
\end{equation}
with $i\leq s(k+1)-1$, $j\leq n-s$ and $(i,j)\neq (s(k+1)-1,n-s)$, for some $\alpha\in\CC^\times$. The base step is \eqref{eq leading term 1}. Consider now
\begin{equation*}
    \ad^{s+1}(\hat{g}(y))(x)=[\hat{g}(y),\alpha t^s y^{s(k+1)-1}z^{n-s}+\text{lower terms}],
\end{equation*}
for some $\alpha\in\CC^\times$, true by inductive hypothesis. Applying \eqref{eq leading term 1} we get \eqref{eq lemma multidegree induction}. It is then clear that
\begin{equation*}
    \deg{(\phi\circ\ad^s(\hat{g}(y))(x))}=\deg{(\alpha t^s v_2^{s(k+1)-1}v_3^{n-s})}.
\end{equation*}
Since $\deg(v_2)>\deg(v_3)$, we get that $\deg ({\phi\circ\Phi_{g,t}(x)})=\deg ({\alpha v_2^{nk+n-1}})$, hence from Lemma \ref{lemma degree}
\begin{equation*}
    \deg(\phi\circ\Phi_{g,t})=(nk+n-1)\deg(v_2).
\end{equation*}

The proof for $\phi\circ\Psi_{g,t}$ is analogous, using the relations 
\begin{equation}\label{eq adjoint action x^m}
    \ad(x^m): x\mapsto 0, \ f(z)\mapsto \delta_{m,t}(f)x^m, \ y\mapsto \delta_{m,t}(P)x^{m-1}
\end{equation}
\cite[Equation 6]{bavulaIsomorphismProblemsGroups2001} instead of \eqref{eq adjoint action y^m}.

\end{proof}

\begin{prop}\label{prop multidegree 2a}
Let $n>2$. An automorphism of the form 
$$\phi=\Phi_{g_1,t}\circ\Omega_t\circ\Phi_{g_2,t}\dots\circ\Omega_t\circ\Phi_{g_s,t},$$ with $s\geq1$, $0\neq g_i\in\CC[y]$ has multidegree
$$\left(n\prod_{i=1}^s(nk_i+n-1),n\prod_{i=1}^{s-1}(nk_i+n-1),n(k_s+1)\prod_{i=1}^{s-1}(nk_i+n-1)\right),$$
where $k_i=\deg(g_i)$.
\end{prop}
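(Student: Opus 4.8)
The plan is to prove the multidegree formula by induction on $s$, the number of $\Omega_t$ factors. The base case $s=1$, namely $\phi = \Phi_{g_1,t}$, follows directly from Lemma \ref{lemma multidegree}: we take $\phi$ there to be the identity, with $(v_1,v_2,v_3)=(x,y,z)$, so $\deg v_2 = n > 2 = \deg v_3$ and $\deg v_2 \geq \deg v_1 = n$; the lemma then gives $\mdeg(\Phi_{g_1,t}) = ((nk_1+n-1)n, n, (k_1+1)n)$, which matches the claimed formula with $s=1$ (the empty products being $1$).

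For the inductive step, suppose the formula holds for $s-1$ factors, and write $\phi = \Phi_{g_1,t}\circ\Omega_t\circ\Phi_{g_2,t}\circ\cdots\circ\Omega_t\circ\Phi_{g_s,t}$. The idea is to peel off the last two factors: set $\phi' = \Phi_{g_1,t}\circ\Omega_t\circ\cdots\circ\Phi_{g_{s-1},t}$, so that $\phi = \phi'\circ\Omega_t\circ\Phi_{g_s,t}$. First I would apply the inductive hypothesis to $\phi'$ to get its multidegree $(\deg v_1', \deg v_2', \deg v_3')$ with $v_1' = n\prod_{i=1}^{s-1}(nk_i+n-1)$, $v_2' = n\prod_{i=1}^{s-2}(nk_i+n-1)$, $v_3' = n(k_{s-1}+1)\prod_{i=1}^{s-2}(nk_i+n-1)$. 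Then I would compose with $\Omega_t = (y, (-1)^n x, t-z)$, which simply swaps the roles of $x$ and $y$ (up to scalars, which don't affect degree) and sends $z \mapsto t-z$ (degree unchanged since $\deg t = 0 < n \le \deg z$-contributions — more precisely $\deg(t-z)=2=\deg z$). So $\phi'\circ\Omega_t = (v_1'', v_2'', v_3'')$ with $\deg v_1'' = \deg v_2'$, $\deg v_2'' = \deg v_1'$, $\deg v_3'' = \deg v_3'$. Finally I would apply Lemma \ref{lemma multidegree} to $\phi'' = \phi'\circ\Omega_t$ composed with $\Phi_{g_s,t}$, after checking the hypotheses of that lemma for $\phi''$.

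The key verification — and the main obstacle — is checking that the hypotheses of Lemma \ref{lemma multidegree} are satisfied at each stage, i.e. that for $\phi'' = \phi'\circ\Omega_t$ one has $\deg v_2'' > \deg v_3''$ and $\deg v_2'' \geq \deg v_1''$. Translating back through the swap, this means $\deg v_1' > \deg v_3'$ and $\deg v_1' \geq \deg v_2'$ for $\phi'$. These are inequalities between the products $n\prod_{i=1}^{s-1}(nk_i+n-1)$, $n\prod_{i=1}^{s-2}(nk_i+n-1)$ and $n(k_{s-1}+1)\prod_{i=1}^{s-2}(nk_i+n-1)$. The inequality $\deg v_1' \geq \deg v_2'$ amounts to $nk_{s-1}+n-1 \geq 1$, which is clear. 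The inequality $\deg v_1' > \deg v_3'$ amounts to $nk_{s-1}+n-1 > n(k_{s-1}+1) = nk_{s-1}+n$, i.e. $-1 > 0$ — which is false! So a naive application fails, and the real content is that one must instead track the inequality in the opposite direction, or re-examine which of the three degrees is largest after each $\Omega_t$. The honest approach is to carry a stronger inductive hypothesis recording the ordering of the three degrees (presumably $\deg v_1 > \deg v_3 > \deg v_2$ or similar, using $n > 2$ crucially so that the gap $nk_i + n - 1$ versus $n(k_i+1)$ and versus the next factor behaves correctly), and to verify that this ordering is preserved under $\circ\,\Omega_t\circ\Phi_{g_s,t}$; the $n>2$ hypothesis is exactly what makes $\deg v_3 = (k+1)\deg v_2 < (nk+n-1)\deg v_2 = \deg v_1$ when $k \geq 1$, and what pins down the strict ordering needed to invoke Lemma \ref{lemma multidegree} repeatedly. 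Once the ordering is established as part of the induction, the multidegree formula itself drops out by multiplying the recursion $\deg v_1^{(\text{new})} = (nk_s+n-1)\deg v_1^{(\text{old, after swap})}$ through, and the proof closes.
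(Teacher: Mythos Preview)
Your overall strategy is exactly the paper's: induct on $s$, peel off $\Omega_t\circ\Phi_{g_s,t}$ on the right, apply the inductive hypothesis to $\phi'$, swap via $\Omega_t$, and invoke Lemma~\ref{lemma multidegree}. The base case is handled identically.

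The place where you go astray is a pure arithmetic slip in the key inequality check. With
\[
\deg v_1' = n\prod_{i=1}^{s-1}(nk_i+n-1),\qquad
\deg v_3' = n(k_{s-1}+1)\prod_{i=1}^{s-2}(nk_i+n-1),
\]
dividing both by the common factor $n\prod_{i=1}^{s-2}(nk_i+n-1)$ reduces $\deg v_1' > \deg v_3'$ to
\[
nk_{s-1}+n-1 \;>\; k_{s-1}+1,
\]
not to $nk_{s-1}+n-1 > n(k_{s-1}+1)$ as you wrote. The correct inequality is equivalent to $(n-1)k_{s-1} + (n-2) > 0$, which holds for all $k_{s-1}\ge 0$ precisely because $n>2$. (The companion inequality $\deg v_1' \ge \deg v_2'$ you already checked.) So after composing with $\Omega_t$ the new triple $(v_1'',v_2'',v_3'')$ satisfies $\deg v_2'' = \deg v_1' > \deg v_3' = \deg v_3''$ and $\deg v_2'' \ge \deg v_1''$, and Lemma~\ref{lemma multidegree} applies directly.

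In short, the ``naive application'' does not fail; no strengthened inductive hypothesis tracking the full ordering is needed. The paper's proof records exactly the inequality $\deg u_2 \le \deg u_3 < \deg u_1$ (your $v_2',v_3',v_1'$) via $(nk+n-1) > (k+1) \ge 1$ for $k\ge 0$ and $n>2$, and then closes the induction in one line. Fix the arithmetic and your proof is complete and identical to the paper's.
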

\begin{proof}
    We will prove this by induction on $s$. If $s=1$, then we have $\phi=\Phi_{g_1,t}$; since $\deg(y)=\deg(x)>\deg(z)$, we can apply Lemma \ref{lemma multidegree} on $\id\circ\Phi_{g_1,t}$. Thus
    \begin{equation*}
        \mdeg(\phi)=(n(nk+n-1),n,n(k+1)).
    \end{equation*}

When $s>1$, we have
$$\phi=\psi\circ\Omega_t\circ\Phi_{g_s,t},$$
with 
$$\psi=\Phi_{g_1,t}\circ\Omega_t\circ\Phi_{g_2,t}\dots\circ\Omega_t\circ\Phi_{g_{s-1},t}. $$

We know by the induction hypothesis that $\psi=(u_1,u_2,u_3)$ has multidegree
$$\left(n\prod_{i=1}^{s-1}(nk_i+n-1),n\prod_{i=1}^{s-2}(nk_i+n-1),n(k_{s-1}+1)\prod_{i=1}^{s-2}(nk_i+n-1)\right). $$
Notice that $\deg(u_2)\leq\deg(u_3)<\deg(u_1)$, because $(nk+n-1)>(k+1)\geq1$ for all $k\geq0$, when $n>2$. But now $\psi\circ\Omega_t=(u_2,(-1)^nu_1,t-u_3)$, hence we can once again apply lemma \eqref{lemma multidegree} to $(\psi\circ\Omega_t)\circ\Phi_{g_s,t}$ and complete the proof.
\end{proof}

\begin{prop}\label{prop multidegree 2b}
Let $n>2$. An automorphism of the form 
$$\phi=\Phi_{g_1,t}\circ\Psi_{h_1,t}\circ\dots\circ\Psi_{h_{s-1},t}\circ\Phi_{g_s,t},$$
with $s\geq1$, $0\neq g_i\in\CC[y]$ and $0\neq h_i\in\CC[x]$ has multidegree
\begin{equation*}
    \begin{pmatrix}
n(nk_s+n-1)\prod_{i=1}^{s-1}(nk_i+n-1)(nl_i+n-1) \\
 
\\
n\prod_{i=1}^{s-1}(nk_i+n-1)(nl_i+n-1) \\
\\
n(k_s+1)\prod_{i=1}^{s-1}(nk_i+n-1)(nl_i+n-1)
\end{pmatrix}
\end{equation*}
where $k_i=\deg(g_i)$ and $l_i=\deg(h_i)$.

\end{prop}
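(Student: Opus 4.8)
The plan is to run the same induction on $s$ that proves Proposition \ref{prop multidegree 2a}, only now the inductive move peels off a $\Psi$-block followed by a $\Phi$-block from the right, and each move consists of two applications of Lemma \ref{lemma multidegree} (one for $\Psi$, one for $\Phi$) instead of one. For the base case $s=1$ we have $\phi=\Phi_{g_1,t}$, and since $\deg(y)=\deg(x)=n>2=\deg(z)$, the first part of Lemma \ref{lemma multidegree} applies to $\id\circ\Phi_{g_1,t}$ and gives $\mdeg(\phi)=(n(nk_1+n-1),\,n,\,n(k_1+1))$, which is the asserted formula with the empty products equal to $1$.

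For the inductive step, write $\phi=\psi\circ\Psi_{h_{s-1},t}\circ\Phi_{g_s,t}$, where $\psi=\Phi_{g_1,t}\circ\Psi_{h_1,t}\circ\dots\circ\Psi_{h_{s-2},t}\circ\Phi_{g_{s-1},t}$ is an automorphism of the same shape with $s-1$ blocks. By the induction hypothesis $\psi=(u_1,u_2,u_3)$ has the multidegree predicted by the formula for $s-1$; in particular $\deg(u_1)=n(nk_{s-1}+n-1)\prod_{i=1}^{s-2}(nk_i+n-1)(nl_i+n-1)$, $\deg(u_2)=n\prod_{i=1}^{s-2}(nk_i+n-1)(nl_i+n-1)$, $\deg(u_3)=n(k_{s-1}+1)\prod_{i=1}^{s-2}(nk_i+n-1)(nl_i+n-1)$, where $l_i=\deg(h_i)$. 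Because $n>2$ we have $nk+n-1>k+1\geq 1$ for every $k\geq 0$ (the same elementary inequality used in Proposition \ref{prop multidegree 2a}), hence $\deg(u_1)>\deg(u_3)\geq\deg(u_2)$. This is exactly the hypothesis of the second part of Lemma \ref{lemma multidegree}, applied to $\psi\circ\Psi_{h_{s-1},t}$: it yields an automorphism $\chi=(w_1,w_2,w_3)$ with $\deg(w_1)=\deg(u_1)$, $\deg(w_2)=(nl_{s-1}+n-1)\deg(u_1)$ and $\deg(w_3)=(l_{s-1}+1)\deg(u_1)$.

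Applying $n>2$ once more gives $\deg(w_2)>\deg(w_3)\geq\deg(w_1)$, so the first part of Lemma \ref{lemma multidegree} applies to $\chi\circ\Phi_{g_s,t}$ and produces the multidegree $((nk_s+n-1)\deg(w_2),\,\deg(w_2),\,(k_s+1)\deg(w_2))$. Substituting $\deg(w_2)=(nl_{s-1}+n-1)\deg(u_1)=n\prod_{i=1}^{s-1}(nk_i+n-1)(nl_i+n-1)$, which follows from the expression for $\deg(u_1)$ above, gives precisely the triple in the statement, completing the induction. There is no real obstacle here: the only point that requires a little care is verifying, at each of the two invocations of Lemma \ref{lemma multidegree}, that the relevant degree comparisons hold, and these all reduce to the single inequality $nk+n-1>k+1$ valid for $n>2$, $k\geq0$; the rest is collecting a telescoping product.
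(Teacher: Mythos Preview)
Your proof is correct and follows essentially the same approach as the paper's: induction on $s$ with base case $\Phi_{g_1,t}$, then peeling off $\Psi_{h_{s-1},t}\circ\Phi_{g_s,t}$ from the right via two successive applications of Lemma \ref{lemma multidegree}, using the inequality $nk+n-1>k+1\geq1$ (valid for $n>2$) to verify the degree hypotheses at each step. The paper's argument is identical up to variable names and is slightly terser about the final substitution, which you spell out explicitly.
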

\begin{proof}
    We will prove this by induction on $s$. When $s=1$ we have $\phi=\Phi_{g_1,t}$, which has multidegree
    $(n(nk_1+n-1),n,n(k_1+1)). $
    
Let us now consider the case $s>1$. Take
$$\psi=\Phi_{g_1,t}\circ\Psi_{h_1,t}\circ\dots\circ\Phi_{g_{s-1},t}=(v_1,v_2,v_3), $$
so that $\phi=\psi\circ\Psi_{h_{s-1},t}\circ\Phi_{h_s,t}$.
By the induction hypothesis, we know that $\psi$ has multidegree 
\begin{equation*}
    \begin{pmatrix}
n(nk_{s-1}+n-1)\prod_{i=1}^{s-2}(nk_i+n-1)(nl_i+n-1) \\
 
\\
n\prod_{i=1}^{s-2}(nk_i+n-1)(nl_i+n-1) \\
\\
n(k_{s-1}+1)\prod_{i=1}^{s-2}(nk_i+n-1)(nl_i+n-1)
\end{pmatrix}
\end{equation*}

Since $n>2$, we have $nk_{s-1}+n-1>k_{s-1}+1\geq 1$, which implies that $\deg(v_2)\leq\deg(v_3)<\deg(v_1)$. We can apply again Lemma $\ref{lemma multidegree}$ to $\psi\circ\Psi_{h_{s-1},t}$ to get $\mdeg(\psi\circ\Psi_{h_{s-1},t})= $

\begin{equation*}
    \begin{pmatrix}
n(nk_{s-1}+n-1)\prod_{i=1}^{s-2}(nk_i+n-1)(nl_i+n-1) \\
 
\\
n\prod_{i=1}^{s-1}(nk_i+n-1)(nl_i+n-1) \\
\\
n(l_{s-1}+1)(nk_{s-1}+n-1)\prod_{i=1}^{s-2}(nk_i+n-1)(nl_i+n-1)
\end{pmatrix}
\end{equation*}
Let $\psi\circ\Psi_{h_{s-1},t}=(u_1,u_2,u_3)$. Then $\deg(u_2)>\deg(u_3)\geq\deg(u_1)$, and we can apply Lemma \ref{lemma multidegree} to $(\psi\circ\Psi_{h_{s-1},t})\circ\Phi_{g_s,t}$ to complete the proof.
\end{proof}

\begin{theorem}\label{teo amalgamated free product}
    Let $n>2$.
    \begin{enumerate}[(i)]
        \item If $P$ is reflective, the group $G_t$ is a free product with amalgamation
        \begin{equation*}
            G_t=T\ast_\Theta J.
        \end{equation*}
        As an abstract group, if $n$ is even
        \begin{equation*}
            G_t\cong (\CC[y]\rtimes\CC^\times)\ast_{\CC^\times} (\CC^\times\rtimes\ZZ/2\ZZ).
        \end{equation*}
        If $n$ is odd, then
        \begin{equation*}
            G_t\cong (\CC[y]\rtimes\CC^\times)\ast_{\CC^\times} H,
        \end{equation*}
        where $H=\langle \CC^\times, \Omega \, | \, \Omega^2=-1, \,\l\cdot\Omega_t=\Omega_t\cdot\l^{-1} \ \forall\l\in\CC^\times \rangle$.
        \item If $P$ is not reflective, the group $G_t$ is a free product with amalgamation
        \begin{equation*}
            G_t=Q_1\ast_\Theta Q_2.
        \end{equation*}
        As an abstract group,
        \begin{equation*}
            G_t\cong (\CC[y]\rtimes\CC^\times)\ast_{\CC^\times} (\CC[x]\rtimes\CC^\times).
        \end{equation*}
    \end{enumerate}
\end{theorem}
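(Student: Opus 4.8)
The strategy is to apply the combinatorial criterion of Theorem \ref{teo tecnico amalgamated free product}. In the reflective case, $T = \Phi \rtimes \Theta$ and $J = \langle \Omega_t, \Theta\rangle$ with $T \cap J = \Theta$; a set of left coset representatives for $\Theta$ in $T$ is $\Phi \cong \CC[y]$ (via $g \mapsto \Phi_{g,t}$), and a set of coset representatives for $\Theta$ in $J$ is $\{\Omega_t, \id\}$. Thus to conclude $G_t = T \ast_\Theta J$ it suffices, by Theorem \ref{teo tecnico amalgamated free product}, to show every $\phi \in G_t$ has a \emph{unique} expression as an alternating word in these representatives times an element of $\Theta$. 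Existence is exactly Proposition \ref{prop decomposition G_t reflective}. For uniqueness, I would argue that a nontrivial alternating word of the form $\Phi_{g_1,t}\circ\Omega_t\circ\cdots\circ\Omega_t\circ\Phi_{g_s,t}\circ\Theta_{\nu,t}$ with $0 \neq g_i$ for $1 < i < s$ cannot equal the identity (equivalently, two such words with the same value must coincide term-by-term). This is where Proposition \ref{prop multidegree 2a} does the work: it computes the multidegree of such a word explicitly as a function of the $k_i = \deg g_i$ and $s$, and one reads off that the multidegree determines $s$ and each $k_i$; comparing the top-degree parts then pins down the leading coefficients of each $g_i$, and one peels off factors one at a time (multiplying on the left by $\Phi_{-g_1,t}$, then on the left by $\Omega_t^{-1}$, etc.) to recover all of $g_1, \dots, g_s$ and finally $\nu$. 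A small subtlety: the $g_1$ and $g_s$ are allowed to be zero, so the boundary cases ($s=0$, $s=1$, or leading/trailing $\Phi$ trivial) must be checked separately, but these reduce to the already-understood structure of $T$ and $J$ and to relation \eqref{eq lemma commuting relations G_t 0}.

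The non-reflective case is entirely parallel: $Q_1 = \Phi \rtimes \Theta$, $Q_2 = \Psi \rtimes \Theta$, $Q_1 \cap Q_2 = \Theta$, with coset representatives $\Phi \cong \CC[y]$ and $\Psi \cong \CC[x]$. Existence of the alternating decomposition is Proposition \ref{prop decomposition G_t non reflective}, and uniqueness follows the same way, now using Proposition \ref{prop multidegree 2b} to show that the multidegree of an alternating word $\Phi_{g_1,t}\circ\Psi_{h_1,t}\circ\cdots\circ\Phi_{g_s,t}$ recovers $s$, all $k_i = \deg g_i$ and all $l_i = \deg h_i$, again followed by a leading-coefficient comparison and an inductive peeling argument. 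Here the hypothesis $0 \neq h_i$ (and $0 \neq g_i$ for $2 \leq i \leq s-1$) is what makes the word genuinely reduced, so the boundary cases to treat by hand are again only those where $g_1$ or $g_s$ vanishes.

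Once the amalgamated product structure $G_t = T \ast_\Theta J$ (resp. $Q_1 \ast_\Theta Q_2$) is established, the abstract group description is essentially bookkeeping using Lemma \ref{lemma commuting relations G_t}. For the non-reflective case, $\Phi \cong \CC[y]$ and $\Psi \cong \CC[x]$ as additive groups, $\Theta \cong \CC^\times$, and relation \eqref{eq lemma commuting relations G_t 2} (resp. its $\Psi$-analogue) gives $Q_1 \cong \CC[y] \rtimes \CC^\times$ and $Q_2 \cong \CC[x] \rtimes \CC^\times$, both amalgamating the common $\CC^\times$. For the reflective case, $T \cong \CC[y] \rtimes \CC^\times$ as before, while $J = \langle \Omega_t, \Theta\rangle$ is generated by $\Theta \cong \CC^\times$ and $\Omega_t$ subject to $\Omega_t^2 = \Theta_{(-1)^n,t}$ (relation \eqref{eq lemma commuting relations G_t 0}) and $\Omega_t \Theta_{\nu,t} = \Theta_{\nu^{-1},t}\Omega_t$ (relation \eqref{eq lemma commuting relations G_t 3}); when $n$ is even $\Omega_t^2 = \id$ so $J \cong \CC^\times \rtimes \ZZ/2\ZZ$, and when $n$ is odd $\Omega_t^2 = \Theta_{-1,t}$ is the order-two element of $\CC^\times$, giving exactly the presentation of $H$. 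I expect the uniqueness argument — specifically, verifying that the multidegree formulas of Propositions \ref{prop multidegree 2a} and \ref{prop multidegree 2b} are injective on the relevant indexing data and handling all the boundary cases where the first or last $\Phi$-factor is trivial — to be the main obstacle; everything else is either already proved in the excerpt or a routine translation of relations into a group presentation. (Note that $n = 2$ is genuinely excluded here, since for $n=2$ one has $nk+n-1 = 2k+1 = k+1$ only when $k=0$, and more importantly the inequalities driving the multidegree induction degenerate.)
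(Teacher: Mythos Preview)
Your plan is correct and follows the same overall route as the paper: apply Theorem~\ref{teo tecnico amalgamated free product}, use Propositions~\ref{prop decomposition G_t reflective} and~\ref{prop decomposition G_t non reflective} for existence, Propositions~\ref{prop multidegree 2a} and~\ref{prop multidegree 2b} for uniqueness, and read off the abstract structure from Lemma~\ref{lemma commuting relations G_t}.

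The one place where you are working harder than necessary is the uniqueness step. You propose to show that the multidegree determines $s$ and each $k_i$ (and $l_i$), then recover leading coefficients and peel off factors inductively. The paper's argument is much shorter: it simply assumes a reduced alternating word equals some $\Theta_{\nu,t}$ and derives a contradiction from the crude bound $\deg(\phi(x)) = n\prod_i(nk_i+n-1) > n$ (since each factor $nk_i+n-1 \geq n-1 \geq 2$ for $n>2$), whereas $\Theta_{\nu,t}$ is linear. The boundary cases $g_1=0$ or $g_s=0$ are handled just as you suggest, by moving the outermost $\Omega_t$ (resp.\ $\Phi$ or $\Psi$) to the other side, where the right-hand side is still linear (resp.\ has $\deg(\phi(y))=n$). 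So your injectivity-of-multidegree and peeling argument would work, but it is not needed: a single degree inequality already forces the contradiction.
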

\begin{proof}
    Thanks to Theorem \ref{teo tecnico amalgamated free product}, we only need to prove that the decompositions in Propositions \ref{prop decomposition G_t reflective} and \ref{prop decomposition G_t non reflective} are unique.
    
    Consider first the case where $P$ is reflective. Assume, for contradiction, that
    \begin{equation*}
\phi=\Phi_{g_1,t}\circ\Omega_t\circ\dots\circ\Omega_t\circ\Phi_{g_s,t}=\Theta_{\nu,t},
    \end{equation*}
    for some $g_i\in\CC[y]$ such that $0\neq g_i$ for $1<i<s$, with $s\geq1$, and for some $\nu\in\CC^\times$. In particular, $\phi$ is a linear automorphism, since $\Theta_{\nu,t}$ is linear. Assume first that $g_1,g_s\neq0$. By Proposition \ref{prop multidegree 2a} we have that
    \begin{equation*}
        \deg(\phi(x))=n\prod_{i=1}^s(nk_i+n-1),
    \end{equation*}
    where $k_i=\deg(g_i)$.  So if $n>2$ then $ \deg(\phi(x))>n$, which leads to a contradiction. If $g_1$ is $0$, we can move $\Omega_t$ to the right to get
    \begin{equation*}
        \phi'=\Phi_{g_2,t}\circ\Omega_t\circ\dots\circ\Omega_t\circ\Phi_{g_s,t}=\Omega_t^{-1}\circ\Theta_{\nu,t}.
    \end{equation*}
    The right-hand side is still a linear automorphism, so we still get a contradiction. We can reason similarly if $g_s=0$.
    
    Let us assume now that $P$ is not reflective. Assume, by contradiction, that 
    \begin{equation}\label{eq decomposizione unica}
    \phi=\Phi_{g_1,t}\circ\Psi_{h_1,t}\circ\dots\circ\Phi_{g_{s-1},t}\circ\Psi_{h_{s-1},t}\circ\Phi_{g_{s},t} =\Theta_{\nu,t}
    \end{equation}
for some $g_i\in\CC[y]$ such that $0\neq g_i$ for $i=2,\dotso,s-1$, $0\neq h_i\in\CC[x]$, with $s\geq1$, and for some $\nu\in\CC^\times$. Assume first that $g_1,g_s\neq0$. Then by Proposition \ref{prop multidegree 2b} we have 
\begin{equation*}
    \deg(\phi(x))=n(nk_s+n-1)\prod_{i=1}^{s-1}(nk_i+n-1)(nl_i+n-1),
\end{equation*}
where $k_i=\deg(g_i)$ and $l_i=\deg(h_i)$. So if $n>2$ then $ \deg(\phi(x))>n$, which leads to a contradiction, because $\Theta_{\nu^{-1},t}$ is linear. 

If one or both of $g_1$ and $g_s$ are zero, then we can multiply both sides of \eqref{eq decomposizione unica} on the left and/or on the right by some $\Phi_{s_i,t}$, for some $0\neq s_i\in\CC[y]$. We get an equation of the form
\begin{equation*}
   \phi'=\Phi_{g_1,t}\circ\Psi_{h_1,t}\circ\dots\circ\Phi_{g_{s-1},t}\circ\Psi_{h_{s-1},t}\circ\Phi_{g_{s},t} =\Theta_{\nu,t}\circ\Phi_{q,t},
\end{equation*}
where $q$ is either $g_1, g_s$ or $g_1+g_s$, and $0\neq g_i$ for all $i$. We can now apply Proposition \ref{prop multidegree 2b} and get that $\deg(\phi' (y))=n\prod_{i=1}^{s-1}(nk_i+n-1)(nl_i+n-1)$, while $\deg(\Theta_{\nu^{-1},t}\circ\Phi_{q,t}(y))=n$. This leads to a contradiction, unless $s=1$. In that case though, equation \eqref{eq decomposizione unica} is either of the form $\id=\Theta_{\nu,t}$ or $\Phi_{g_1,t}=\Theta_{\nu^{-1},t}$, which again leads to contradiction. The decomposition of Proposition \ref{prop decomposition G_t non reflective} is thus unique.

The abstract group structures follow from the isomorphisms
\begin{equation*}
\begin{aligned}
        &\CC[y]\cong \Phi \hspace{15mm} &g(y)\mapsto\Phi_{g,t} \\
        &\CC[x]\cong \Psi \hspace{15mm} &g(x)\mapsto\Psi_{g,t} \\
    &\CC^\times\cong \Theta \hspace{15mm} &\nu\mapsto\Theta_{\nu,t},
\end{aligned}
\end{equation*}
and from the relations in Lemma \ref{lemma commuting relations G_t}.

\end{proof}

\begin{cor}
    Let $n>2$. The groups $G_0$ and $G_1$ have the same amalgamated free product structure described in Theorem \ref{teo amalgamated free product}. In particular, 
    \begin{equation*}
        G_0\cong G_1=\Aut(A(P)).
    \end{equation*}
\end{cor}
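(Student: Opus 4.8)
The plan is to transfer the amalgamated free product structure from $G_t$ to its two ``specializations'' $G_0$ and $G_1$ by observing that the quotient maps $G_t \to G_i$ are group isomorphisms. First I would recall the setup: $G_t$ acts by $t$-linear automorphisms on $A_t(P)$, and since these automorphisms are $\CC[t]$-linear they descend to automorphisms of the quotients $A_0(P) = A_t(P)/(t)$ and $A_1(P) = A_t(P)/(t-1)$, giving surjective group homomorphisms $\pi_0 \colon G_t \to G_0$ and $\pi_1 \colon G_t \to G_1$. By \eqref{eq G_1=Aut} we already know $G_1 = \Aut(A(P))$ (via Proposition \ref{prop semi-classical limit type A}), and by \eqref{eq G_0 in PAut} we know $G_0 \le \PAut(\Aa(P))$. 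So it suffices to show $\pi_0$ and $\pi_1$ are injective.

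The key observation is that all the multidegree computations in Lemmas \ref{lemma degree}, \ref{lemma multidegree} and Propositions \ref{prop multidegree 2a}, \ref{prop multidegree 2b} are insensitive to specialization of $t$: they only used that $\gr(A_t(P)) \cong \CC[t,x,y,z]/(xy - z^n)$ is a domain, and the analogous facts $\gr(A_0(P)) \cong \gr(\Aa(P)) \cong \CC[\mathbf{A_{n-1}}]$ and $\gr(A_1(P)) \cong \gr(A(P)) \cong \CC[\mathbf{A_{n-1}}]$ hold, each being a domain. Moreover the generators $\Phi_{\l,m,t}, \Psi_{\l,m,t}, \Theta_{\nu,t}, \Omega_t$ and their images under $\pi_0, \pi_1$ obey exactly the same relations from Lemma \ref{lemma commuting relations G_t} (those relations are $\CC[t]$-linear identities, verified on generators, hence survive passing to the quotients). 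Therefore the exact same argument as in the proof of Theorem \ref{teo amalgamated free product} --- writing any element in the normal form of Proposition \ref{prop decomposition G_t reflective} or \ref{prop decomposition G_t non reflective}, then using the multidegree formulas to rule out any nontrivial word equalling a linear automorphism $\Theta_{\nu}$ --- shows that the normal forms are unique in $G_0$ and in $G_1$. By Theorem \ref{teo tecnico amalgamated free product} this gives $G_0$ and $G_1$ the amalgamated free product structure $T \ast_\Theta J$ (resp.\ $Q_1 \ast_\Theta Q_2$), identical to that of $G_t$, and in particular $\pi_0, \pi_1$ are isomorphisms.

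Concretely I would argue: the relations of Lemma \ref{lemma commuting relations G_t} already present $G_t$ as a quotient of the abstract amalgamated product $T \ast_\Theta J$ (or $Q_1 \ast_\Theta Q_2$); the uniqueness of normal forms shows this quotient map is injective, hence an isomorphism; and since the image generators in $G_0$ and $G_1$ satisfy the same relations and the same multidegree-based uniqueness (replacing $A_t(P)$ by $A_0(P)$ or $A_1(P)$ throughout, with $\deg(0) = -\infty$ and $\gr$ a domain), the composites $T \ast_\Theta J \to G_t \to G_0$ and $\to G_1$ are also isomorphisms. Combined with \eqref{eq G_1=Aut} this yields $G_0 \cong G_1 = \Aut(A(P))$.

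The main obstacle --- really the only point needing care --- is making sure the multidegree bookkeeping genuinely does not degenerate after specialization. In the formulas in Lemma \ref{lemma multidegree} the coefficients $\alpha, \beta$ were elements of $\CC^\times$ arising from $\delta_{m,t}(P)$ and its iterates; one must check these leading coefficients remain nonzero when $t$ is set to $0$ or $1$, i.e.\ that $\delta_m(P) = (\sigma^m - 1)(P)$ and $\delta_{1,0}(P)$ (the derivative $-P'$, up to sign, from Proposition \ref{prop semi-classical limit type A}) still have the expected degree $n-1$ in $z$ and nonzero leading coefficient. This holds because $P$ is monic of degree $n$, so $P'$ has degree $n-1$ and $(\sigma^m-1)(P)$ has degree $n-1$ with leading coefficient $-mn \ne 0$; the higher iterates drop degree by exactly one each time for the same reason. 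Once this is noted, the entire argument of Theorem \ref{teo amalgamated free product} applies verbatim with $t$ replaced by its specialized value.
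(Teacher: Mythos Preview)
Your proposal is correct and follows essentially the same route as the paper: the relations of Lemma \ref{lemma commuting relations G_t} survive specialization, the degree/multidegree machinery of Lemmas \ref{lemma degree}--\ref{lemma multidegree} and Propositions \ref{prop multidegree 2a}--\ref{prop multidegree 2b} goes through verbatim in $A_0(P)$ and $A_1(P)$, and hence the uniqueness argument of Theorem \ref{teo amalgamated free product} applies to $G_0$ and $G_1$. The paper phrases the key point more succinctly as ``no $t$ appears in the coefficients of the leading words'' in the proof of Lemma \ref{lemma multidegree} (the factors of $t$ coming from $\delta_{m,t}$ are exactly cancelled by the $1/t$ in the definition of $\Phi_{g,t}$), whereas you unpack this into an explicit check on the degrees of $\delta_m(P)$ and its Poisson analogue; both amount to the same observation.
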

\begin{proof}
    Notice that the relations in Lemma \ref{lemma commuting relations G_t} still hold in $G_0$ and $G_1$, so we get decompositions analogous to those of Propositions \ref{prop decomposition G_t reflective} and \ref{prop decomposition G_t non reflective}. We can define a notion of degree for basis monomials in $A(P)$ and $\Aa(P)$ just as for $A_t(P)$. Notice also that in the proof of Lemma \ref{lemma multidegree}, no $t$ appears in the coefficients of the leading words, so the result holds in $G_0$ and $G_1$ too. These two lemmas were the only ingredient used in the proofs of Propositions \ref{prop multidegree 2a} and \ref{prop multidegree 2b}, and subsequently of Theorem \ref{teo amalgamated free product}. So, $G_0$ and $G_1$ have the same amalgamated free product structure as of $G_t$. In particular, they are isomorphic. 
\end{proof}

\subsection{The group of Poisson automorphisms of $\Aa(P)$}\label{section Poisson automorphisms type A}

We know that $G_0\leq \PAut(\Aa(P))$ by \eqref{eq G_0 in PAut}. In this section we show that this is an equality.

The group of automorphisms of $\Aa(P)$ as an affine variety is well known. Makar-Limanov computed its generators \cite{makar-limanovGroupsAutomorphismsClass1990}, and Blanc and Dubouloz \cite{blancAutomorphismsPreFibered2011} proved that it has an amalgamated free product structure.

\begin{theorem}[{\cite[Theorem 1]{makar-limanovGroupsAutomorphismsClass1990}}]\label{teo automorphism affine type A}
Let $R$ be the quotient algebra 
\begin{equation*}
    R=\CC[x,y,z]/(xy-P(z)),
\end{equation*}
with $P(z)\in\CC[z]$. Then the group $Aut(R)$ is generated by the following automorphisms:
\begin{enumerate}[(a)]
    \item Hyperbolic rotations: $\Theta_{\nu,0}=(\nu x,\nu^{-1} y, z)$, for all $\nu\in\CC^\times$;
    \item Involution: $V=(y,x,z)$;
    \item Triangular automorphisms: $$\Delta_g=(z+[P(z+yg(y))-P(z)]y^{-1}, y, z+yg(y))$$
    for all $g(y)\in\CC[y]$;
    \item \emph{(If $P(z)=c(z+a)^n$)} Rescalings $R_\nu=(\nu^n x, y,\nu z+(\nu-1)a)$, for all $\nu\in\CC^\times$;
    \item \emph{(If $P(z)=(z+a)^iQ((z+a)^d)$)} Symmetries: $S_\mu=(\mu^ix, y, \mu z+(\mu-1)a)$, for all $\mu^d=1$.
\end{enumerate}
\end{theorem}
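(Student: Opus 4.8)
The plan is to deduce Makar-Limanov's theorem from a classification of the locally nilpotent derivations (LNDs) of $R$. Throughout I assume $\deg P=:n\geq 2$, which is the case relevant here and is needed for the statement (if $\deg P\leq 1$ then $R\cong\CC[u,v]$ and $\Aut(R)$ is strictly larger). Two LNDs are visible at once: $D_1$, determined by $D_1(y)=0$, $D_1(z)=y$, $D_1(x)=P'(z)$, which has kernel $\CC[y]$; and its transpose $D_2$, with $D_2(x)=0$, $D_2(z)=x$, $D_2(y)=P'(z)$, kernel $\CC[x]$. A direct computation shows that the $\CC[y]$-multiples of $D_1$ exponentiate to the automorphisms $\Delta_g$, that the $\CC[x]$-multiples of $D_2$ exponentiate to their $V$-conjugates, and that conjugation by $V$ interchanges $D_1$ and $D_2$. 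Since $\Aut(R)$ acts on the set of LNDs by conjugation --- with $\phi D\phi^{-1}$ an LND of kernel $\phi(\ker D)$ --- the argument splits into two tasks: \textbf{(A)} classify the LNDs of $R$, and \textbf{(B)} determine the stabilizer of $\CC[y]$ in $\Aut(R)$.

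For \textbf{(A)} I would prove that every nonzero LND of $R$ is a nonzero polynomial multiple of $D_1$ or of $D_2$, so that $\CC[x]$ and $\CC[y]$ are the only kernels of LNDs. Here one uses the general facts that the kernel $\ker D$ of a nonzero LND $D$ is factorially closed in $R$ and has transcendence degree $1$, so a generator $q$ of it exhibits $\mathrm{Spec}\,R$ as an $\AA^1$-fibration over $\AA^1$; the task is to show, \emph{without} invoking $\Aut(R)$, that $q$ may be taken to be $x$ or $y$. One does this by a degree estimate on $D$ and on how it interacts with the relation $xy=P(z)$: passing, for instance, to the graded limit $\gr R\cong\CC[x,y,z]/(xy-z^n)=\CC[\mathbf{A_{n-1}}]$ for the weights $\deg x=\deg y=n$, $\deg z=2$, the homogeneous LNDs of this cone for $n\geq 2$ have kernel $\CC[\bar x]$ or $\CC[\bar y]$, and one then rules out any ``exotic'' LND on $R$ with a different kernel. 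This is Makar-Limanov's key technical input, and it is where essentially all the difficulty lies; in modern language it is the classification of the $\AA^1$-fibrations on this Danielewski-type surface. Granting (A), any $\phi\in\Aut(R)$ permutes $\{\CC[x],\CC[y]\}$, and after composing with $V$ we may assume $\phi(\CC[y])=\CC[y]$.

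For \textbf{(B)}, since $\phi$ induces a ring automorphism of $\CC[y]$ we have $\phi(y)=\nu^{-1}y+c$ for some $\nu\in\CC^\times$, $c\in\CC$. Now $R/(y-c)\cong\CC[z]$ is a domain when $c\neq 0$, while $R/(y)\cong\CC[x,z]/(P(z))$ is not (using $\deg P\geq 2$), so $\phi$ cannot send the ideal $(y)$ to $(y-c)$; hence $c=0$, and composing with $\Theta_\nu$ we may take $\phi(y)=y$. Localizing at $y$ gives $R_y\cong\CC[y^{\pm1},z]$, and an automorphism of this Laurent ring fixing $y$ has the form $z\mapsto\alpha y^k z+\beta(y)$ with $\alpha\in\CC^\times$; requiring both $\phi(z)\in R$ and $\phi^{-1}(z)\in R$ forces $k=0$ and $\beta\in\CC[y]$. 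Writing $\beta(y)=b+y\gamma(y)$ and composing with a suitable $\Delta_g$, we reduce to $\phi(z)=cz+b$ with $c\in\CC^\times$, $b\in\CC$; then $\phi(x)=P(cz+b)/y$ is forced, and it lies in $R$ only if $P(z)\mid P(cz+b)$ in $\CC[z]$, i.e. $P(cz+b)=c^nP(z)$.

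Finally I would read the exceptional cases off this identity. If $c=1$ it forces $b=0$ and $\phi=\id$. If $c\neq 1$, let $z^*=b/(1-c)$ be the fixed point of $z\mapsto cz+b$ and put $\widetilde P(w)=P(w+z^*)$; the identity becomes $\widetilde P(cw)=c^n\widetilde P(w)$, which forces $\widetilde P$ to be supported on the exponents $j$ with $c^j=c^n$. If $c$ is not a root of unity only $j=n$ survives, so $P=c_0(z-z^*)^n$ and $\phi=R_c$ (case (d), with $a=-z^*$); if $c$ is a primitive $d$-th root of unity the surviving exponents are those $\equiv n\pmod d$, so $P=(z-z^*)^iQ((z-z^*)^d)$ with $i\equiv n\pmod d$ and $\phi=S_c$ (case (e)). Assembling all the reductions, every element of $\Aut(R)$ is a product of automorphisms of the types $\Theta_\nu$, $V$, $\Delta_g$ and --- in the exceptional cases for $P$ --- $R_\nu$ or $S_\mu$, which is the claim. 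The one genuine obstacle is step (A); once the LND classification is in hand, step (B) and the case analysis are routine.
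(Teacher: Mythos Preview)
The paper does not give its own proof of this statement: it is cited from Makar-Limanov and used as a black box, so there is nothing in the paper to compare your argument against.

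That said, your outline is a correct reconstruction of the standard argument, and step (B) together with the closing case analysis is complete and accurate. The ideal-theoretic trick forcing $c=0$ (comparing $R/(y)\cong\CC[x,z]/(P(z))$, which is not a domain for $\deg P\geq 2$, with $R/(y-c)\cong\CC[z]$), the localization $R_y\cong\CC[y^{\pm1},z]$, the reduction of $\phi(z)$ to $cz+b$ via a triangular automorphism, and the extraction of cases (d) and (e) from the identity $P(cz+b)=c^nP(z)$ by passing to the fixed point $z^*=b/(1-c)$ are all sound.

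The one substantive gap is step (A), which you honestly flag. Your sketch ``pass to $\gr R\cong\CC[\mathbf{A_{n-1}}]$ and classify homogeneous LNDs there'' hides real work: an arbitrary LND on $R$ need not respect the weight filtration, so one must first argue that its top-degree part is again a \emph{nonzero} LND on the cone, classify those, and then lift the kernel identification back to $R$. Each sub-step requires justification. That is precisely Makar-Limanov's technical contribution, so granting it --- as both you and the paper do --- is reasonable; just be aware that the graded-limit heuristic you wrote is a signpost, not a proof.
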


\begin{lemma}\label{lemma automorphism affine type A}
    If $P$ is a monic polynomial of degree $n$ with no term of degree $n-1$, we can substitute (d) and (e) from the list of generators in Theorem \ref{teo automorphism affine type A} with
    \begin{enumerate}[(a)]\setcounter{enumi}{3}
        \item \emph{(If $P(z)=z^n$)} Rescalings $R_\nu=(\nu^n x, y,\nu z)$, for all $\nu\in\CC^\times$;
        \item \emph{(If $P(z)=z^iQ(z^d)$)} Symmetries: $S_\mu=(\mu^ix, y, \mu z)$, for all $\mu^d=1$.
    \end{enumerate}
\end{lemma}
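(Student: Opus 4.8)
The plan is to show that the hypotheses on $P$ force the shift parameter $a$ occurring in generators (d) and (e) of Theorem \ref{teo automorphism affine type A} to vanish, so that $R_\nu$ and $S_\mu$ collapse to the forms stated in the lemma, and then to check that the old and new generating sets produce the same subgroup of $\Aut(R)$.

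First I would dispose of (d). If $P(z)=c(z+a)^n$, then monicity of $P$ gives $c=1$, so $P(z)=(z+a)^n=z^n+na\,z^{n-1}+\cdots$; since $P$ has no term of degree $n-1$ and we are in characteristic zero with $n\geq 2$, this forces $a=0$, hence $P(z)=z^n$ and $R_\nu=(\nu^n x,\,y,\,\nu z+(\nu-1)a)=(\nu^n x,\,y,\,\nu z)$.

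For (e) I would run a similar argument, the only extra point being to isolate the coefficient of $z^{n-1}$ from a sparse polynomial. Suppose $P(z)=(z+a)^i Q((z+a)^d)$; I may assume $d\geq 2$, because $d=1$ forces $\mu=1$ and so $S_\mu=\id$, contributing nothing. Setting $R(w):=w^i Q(w^d)$ we have $P(z)=R(z+a)$, and comparing leading terms shows $Q$ is monic of degree $r$ with $n=i+dr$; hence every exponent occurring in $R(w)$ is $\equiv i\pmod d$, while $n-1\equiv i-1\not\equiv i\pmod d$ since $d\geq 2$. Thus $R$ has no $w^{n-1}$ term, so the coefficient of $z^{n-1}$ in $P(z)=R(z+a)$ comes solely from the leading term $w^n$ of $R$ and equals $na$. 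The hypothesis then forces $a=0$, giving $P(z)=z^i Q(z^d)$ and $S_\mu=(\mu^i x,\,y,\,\mu z+(\mu-1)a)=(\mu^i x,\,y,\,\mu z)$.

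Finally I would observe that this yields the claimed substitution: each new generator of type (d) or (e) is the $a=0$ instance of an old one, and conversely every old generator of type (d) or (e) is either trivial (the $d=1$ case) or already has $a=0$ and hence appears in the new list; so the subgroup of $\Aut(R)$ generated is unchanged, and $\Aut(R)$ is generated by (a), (b), (c) together with the new (d), (e). I do not expect a genuine obstacle here: the only points needing a little care are the degenerate case $d=1$ in (e) and the observation that $R(w)=w^i Q(w^d)$ has no $w^{n-1}$ term, both of which are routine once the congruence structure of the exponents modulo $d$ is made explicit.
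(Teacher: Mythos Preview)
Your proof is correct and follows essentially the same approach as the paper's: both dispose of (d) by reading off the $z^{n-1}$ coefficient of $(z+a)^n$, and both handle (e) by discarding $d=1$ and observing that, since the second-highest exponent in $(z+a)^iQ((z+a)^d)$ is $n-d<n-1$, only the leading term contributes to the $z^{n-1}$ coefficient, forcing $a=0$. Your congruence phrasing and explicit closing remark that the old and new generating sets coincide are minor presentational additions, not a different argument.
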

\begin{proof}
    For (d), $P(z)=c(z+a)^n$ only if $c=1$ and $a=0$, since $P$ is monic and has no term of degree $n-1$.
    
    For (e), first notice that the polynomial $Q$ must be monic, since $P$ is. For degree reason, $n=i+dk$, where $k$ is the degree of $Q$. We can ignore the case $d=1$, since the only corresponding automorphism is $S_1=\id$. Expanding $P(z)$ we get $$(z+a)^{i+dk}+\alpha(z+a)^{i+d(k-1)}+\text{terms of lower degree}$$
    for some $\alpha\in\CC$. Since $d>1$, the term of degree $n-1=i+dk-1$ of $P(z)$ comes only from the expansion of $(z+a)^{i+dk}$. Since $P(z)$ has no term of degree $n-1$, we must have $a=0$.
\end{proof}

\begin{oss}\label{oss S special case of R}
    Notice that if $P(z)=z^iQ(z^d)$ is of degree $n$, and $\mu^d=1$, then $\mu^i=\mu^n$. So, if they are both defined, the automorphisms of type (e) are actually a special type of automorphisms of type (d), i.e. $R_\mu=S_\mu$ for all $\mu$ with $\mu^d=1$.
\end{oss}

Let us introduce the affine automorphism 
\begin{align*}
    &\nabla_g=V\circ\Delta_{-g}\circ V\\
    &=(x, y+[P(z-xg(x))-P(z)]x^{-1}, z-xg(x)).
\end{align*}

\begin{prop}
Let $g\in\CC[x]$. Then
$$\Delta_g=e^{\ad(\hat{g}(y))}=\Phi_{g,0},$$
$$\nabla_g=e^{\ad(\hat{g}(x))}=\Psi_{g,0}, $$
where $\hat{g}:=\int^x_0gdx$ denotes the antiderivative of $g$.
In particular, $\Delta_g$ and $\nabla_g$ are Poisson automorphisms and elements of $G_0$.
\end{prop}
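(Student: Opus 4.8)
The claim is an explicit computation: that the affine triangular automorphism $\Delta_g$ equals the exponential $e^{\ad(\hat g(y))}$, where the adjoint action is now taken with respect to the Poisson bracket on $\Aa(P)\cong A_0(P)$, and similarly for $\nabla_g$. The plan is to observe that $\Delta_g$ and $\nabla_g$ are obtained from the defining formulas for $\Phi_{g,0}$ and $\Psi_{g,0}$ simply by rewriting them, using the Poisson relations $\{x,y\}=-P'(z)$, $\{z,x\}=x$, $\{z,y\}=-y$. Concretely, I would first set $m=1$ in the formula for $\Phi_{\l,m,t}$ from Theorem~\ref{teo automorfismi quantizzazione type A} (more precisely its lifted $t$-version), pass to $t=0$, and collect the terms of the $\Phi$-sum: the image of $z$ becomes $z+\l y$ (for $\hat g(y)=\l y^2/2$, i.e. $g=\l y$), and in general $z\mapsto z+y g(y)$ once one writes $\Phi_{g,t}=e^{1/t\ad(\hat g(y))}$ with $\hat g=\int_0^y g$. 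So I would just verify directly that $e^{\ad(\hat g(y))}(z)=z+yg(y)$ by noting $\ad(\hat g(y))$ acts on $z$ as a derivation with $\{\hat g(y),z\}=yg(y)$ (using $\{z,y\}=-y$ and the chain rule) and that $\ad(\hat g(y))(yg(y))=0$ because $\{\hat g(y),y\}=0$; hence the exponential series truncates after the first-order term.

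The second coordinate is immediate: $\ad(\hat g(y))(y)=\{\hat g(y),y\}=0$, so $y$ is fixed, matching $\Delta_g$. For the first coordinate I would compute $\ad(\hat g(y))(x)=\{\hat g(y),x\}$. Using $\{x,y\}=-P'(z)$ and the chain rule one gets $\{\hat g(y),x\}=-P'(z)g(y)$ — wait, more carefully, $\{\hat g(y),x\}=\hat g{}'(y)\{y,x\}=g(y)P'(z)$. Then one must iterate, noting that each further application of $\ad(\hat g(y))$ only differentiates the $z$-dependence: $\ad(\hat g(y))(P'(z))=g(y)\,y\,P''(z)$, and so on, and the whole series resums to the finite-difference expression $[P(z+yg(y))-P(z)]y^{-1}$ appearing in $\Delta_g$. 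Alternatively, and more cleanly, I would argue structurally: $e^{\ad(\hat g(y))}$ is a Poisson automorphism, it fixes $y$, sends $z\mapsto z+yg(y)$, and is determined on $x$ by the defining relation $xy=P(z)$ of $\Aa(P)$ — applying the automorphism to $xy=P(z)$ forces $e^{\ad(\hat g(y))}(x)\cdot y=P(z+yg(y))$, hence $e^{\ad(\hat g(y))}(x)=[P(z+yg(y))]y^{-1}$, and since the $\l=0$ term of the original must be $x$ this equals $x+[P(z+yg(y))-P(z)]y^{-1}$. This reproduces exactly the formula for $\Delta_g$ in Theorem~\ref{teo automorphism affine type A}, so $\Delta_g=e^{\ad(\hat g(y))}$. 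The identity $\Delta_g=\Phi_{g,0}$ is then just the observation that $\Phi_{g,t}:=e^{1/t\ad(\hat g(y))}$ reduces, modulo $t$ and using $\{\cdot,\cdot\}=\frac1t[\cdot,\cdot]\bmod(t)$ (Proposition~\ref{prop semi-classical limit type A}), precisely to $e^{\ad(\hat g(y))}$ with the Poisson bracket.

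The statement for $\nabla_g$ follows by the symmetry $\nabla_g=V\circ\Delta_{-g}\circ V$ together with the analogous computation using $\ad(x^m)$-type relations, or directly: $\ad(\hat g(x))(x)=0$, $\ad(\hat g(x))(z)=-xg(x)$ (from $\{z,x\}=x$), and the relation $xy=P(z)$ forces the $y$-coordinate. Finally, once $\Delta_g=\Phi_{g,0}$ and $\nabla_g=\Psi_{g,0}$ are established, membership in $G_0$ is immediate since $\Phi_{g,0},\Psi_{g,0}\in G_0$ by definition, and the fact that they are Poisson automorphisms follows either from \eqref{eq G_0 in PAut} ($G_0\le\PAut(\Aa(P))$) or directly from being exponentials of the inner (hence Poisson) derivations $\ad(\hat g(y))$, $\ad(\hat g(x))$.

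The main obstacle is bookkeeping rather than conceptual: making sure the finite-difference resummation of the exponential series matches the closed-form expressions in Makar-Limanov's generators, and checking that the normalizations (the factor $1/t$, the antiderivative convention $\hat g=\int_0^y g$, the sign in $\{x,y\}=-P'(z)$) are all consistent. Using the defining relation $xy=P(z)$ to pin down the $x$-coordinate sidesteps most of the explicit series manipulation, so I would favour that route.
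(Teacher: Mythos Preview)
Your proposal is correct and largely parallels the paper's proof: both compute $\ad(\hat g(y))$ on the generators, observe that $y$ is fixed and that the series for $z$ truncates at first order to give $z\mapsto z+yg(y)$, and then identify the $x$-coordinate with the Makar--Limanov expression. The one genuine difference is in how you handle the $x$-coordinate. The paper iterates $\ad(\hat g(y))$ explicitly to obtain
\[
e^{\ad(\hat g(y))}(x)=x+\sum_{i=1}^n \frac{1}{i!}P^{(i)}(z)\,y^{i-1}g^i(y),
\]
and then separately expands $P(z+yg(y))-P(z)$ via the binomial theorem to match this sum. Your preferred ``structural'' route instead uses that $e^{\ad(\hat g(y))}$ is an algebra automorphism (exponential of a locally nilpotent derivation) and applies it to the defining relation $xy=P(z)$, forcing $\phi(x)\cdot y=P(z+yg(y))$ and hence $\phi(x)=x+[P(z+yg(y))-P(z)]y^{-1}$ since $\Aa(P)$ is a domain. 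This is cleaner and sidesteps the Taylor bookkeeping; the paper's explicit series has the complementary virtue of displaying the exponential in the form directly comparable to $\Phi_{g,0}$. If you use the structural argument, do make explicit that $\ad(\hat g(y))$ is locally nilpotent (each application differentiates the $z$-dependence of $P$, which has finite degree), so that the exponential is a well-defined automorphism before you invoke the relation.
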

\begin{proof}
We have that
\begin{equation}\label{eq poisson adjoint action y^n}
    \{y^k, f(z)\}=kf'(z)y^{k},\hspace{8pt} \{y^k,x\}=kP'(z)y^{k-1},
\end{equation} and 
\begin{equation}\label{eq poisson adjoint action x^n}
    \{x^k, f(z)\}=kx^kf'(z), \hspace{8pt}\{x^k,y\}=-kx^{k-1}P'(x),
\end{equation}
for all $k\geq0$. These relations can be checked directly or derived from \eqref{eq adjoint action y^m} and \eqref{eq adjoint action x^m} using the isomorphism $\Aa(P)\cong A_0(P)$. By \eqref{eq poisson adjoint action y^n}:
\begin{align*}
    &\{\hat{g}(y),z\}=yg(y), \\
    &\{\hat{g}(y),x\}=P'(z)g(y),\\
    &\{\hat{g}(y), P^{(i)}(z)y^{i-1}g^i(y)\}=P^{(i+1)}(z)y^ig^{i+1}(y),
\end{align*}
where $P^{(i)}$ denotes the $i$-th derivative of $P$. Hence, 
$$e^{\ad(\hat{g}(y))}=(x+\sum_{i=1}^n{\frac{1}{i!}P^{(i)}(z)y^{i-1}g^i(y)},y, z+yg(y)). $$
To complete the proof, we need to show that $\Phi_{g,0}$ acts on $x$ in the same way as $\Delta_g$. Let $P(z)=\sum_{i=0}^n{a_iz^i}$, with $a_n=1$ and $a_{n-1}=0$.
\begin{align*}
    &P(z+yg(y))-P(z)=\sum_{i=0}^n{a_i(z+yg(y))^i}-\sum_{i=0}^n{a_iz^i}= \\
    &\sum_{i=1}^n{\binom{n}{i}z^{n-i}y^ig^i(y)}+a_{n-2}\sum_{i=1}^{n-2}{\binom{n}{i}z^{n-2-i}y^ig^i(y)}+\dotso+a_1yg(y)= \\
    &\sum_{i=1}^n{y^ig^i(y)\left[\binom{n}{i}z^{n-i}+a_{n-2}\binom{n-2}{i}z^{n-2-i}+\dotso+a_i\right]}=\\
    &\sum_{i=1}^n{\frac{1}{i!}P^{(i)}(z)y^ig^i(y)}.
\end{align*}
A similar computation using \eqref{eq poisson adjoint action x^n} shows that $$\nabla_g=e^{\ad(\hat{g}(x))}=(x, y+\sum_{j=1}^n{\frac{(-1)^j}{j!}P^{(j)}(z)x^{j-1}g^j(A)}, z-xg(x)).$$
\end{proof}

\begin{oss}
    Consider the automorphism $S_{-1}=((-1)^nx,y,-z)$. This is in $\Aut(\Aa)$ if and only if $P(z)$ is of the form $z^iQ(z^2)$, by Proposition \ref{lemma automorphism affine type A}, i.e. if and only if $P(z)$ is reflective. If $P$ is reflective, then
    $$\Omega_0=S_{-1}\circ V=(y,(-1)^n x,-z). $$
\end{oss}

\begin{theorem}\label{teo generatori poisson type a}
The group $\PAut(\Aa(P))$ of Poisson automorphisms of $\Aa(P)$ is equal to $G_0$.
\end{theorem}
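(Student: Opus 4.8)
The plan is to establish the reverse inclusion $\PAut(\Aa(P)) \leq G_0$; combined with \eqref{eq G_0 in PAut} this gives the desired equality. The starting point is that every Poisson automorphism of $\Aa(P)$ is in particular an automorphism of $\Aa(P)$ as an affine variety, so it lies in the group generated by the automorphisms listed in Theorem \ref{teo automorphism affine type A}, as refined by Lemma \ref{lemma automorphism affine type A}. Thus it suffices to show that each of these affine generators, whenever it happens to be a \emph{Poisson} automorphism, already belongs to $G_0$. We have already identified the triangular automorphisms $\Delta_g = \Phi_{g,0}$ and their mirror images $\nabla_g = \Psi_{g,0}$ as elements of $G_0$, and the hyperbolic rotations $\Theta_{\nu,0}$ are in $G_0$ by definition. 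So the outstanding cases are: the involution $V = (y,x,z)$; the rescalings $R_\nu = (\nu^n x, y, \nu z)$ (defined only when $P(z) = z^n$); and the symmetries $S_\mu = (\mu^i x, y, \mu z)$ (defined when $P(z) = z^i Q(z^d)$). By Remark \ref{oss S special case of R}, the symmetries are a special case of the rescalings when both are available, but the symmetries can occur when $R_\nu$ is not defined, so they must be treated on their own.

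The first step is to determine exactly which of these affine generators are Poisson. For $V = (y,x,z)$, one checks the bracket relations of $\Aa(P)$ directly: $V$ preserves $\{z,x\}=x$ and $\{z,y\}=-y$ only up to sign, and $V(\{x,y\}) = \{y,x\} = P'(z)$ while $\{V(x),V(y)\} = \{y,x\} = P'(z)$ — so $V$ fails to be Poisson in general (the $z$-bracket relations force $\{z,y\} = -y \mapsto \{z,x\} = x \neq -x$). Hence $V$ alone is never a Poisson automorphism, but the \emph{composite} $\Omega_0 = S_{-1}\circ V = (y, (-1)^n x, -z)$ is, precisely when $P$ is reflective (as recorded in the last Remark), and $\Omega_0 \in G_0$. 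So the real content is: (i) show $S_{-1}$ (equivalently $S_\mu$ for $\mu = -1$) is Poisson iff $P$ is reflective, in which case $\Omega_0 = S_{-1}\circ V \in G_0$; (ii) show that for $\mu$ a $d$-th root of unity other than $\pm 1$, or for $R_\nu$ with $\nu$ not a root of unity, the automorphism $S_\mu$ (resp. $R_\nu$) fails the Poisson condition, so it never enters $\PAut$. For step (ii) I would impose the Poisson relations on $S_\mu = (\mu^i x, y, \mu z)$: the relation $\{z,x\} = x$ becomes $\mu \cdot \mu^i \{z,x\} = \mu^i x$, i.e. $\mu^{i+1} x = \mu^i x$, forcing $\mu = 1$; similarly for $R_\nu$ the relation $\{z,x\}=x$ gives $\nu \cdot \nu^n \{z,x\} = \nu^n x$, forcing $\nu = 1$. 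So $R_\nu$ and $S_\mu$ are Poisson only in the trivial case — \emph{unless} one also composes with $V$, which is the scenario already covered by $\Omega_0$.

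Putting this together: any $\phi \in \PAut(\Aa(P))$ decomposes as a word in the affine generators; using the relations among them one can always normalize so that each letter $V$ is paired with the unique sign-symmetry $S_{-1}$ into a factor $\Omega_0$ (possible only when $P$ is reflective, which is exactly when any such $\phi$ can involve $V$ at all, since otherwise the word would not be Poisson), and the remaining letters are $\Theta_{\nu,0}$, $\Delta_g = \Phi_{g,0}$, $\nabla_g = \Psi_{g,0}$, all of which lie in $G_0$. Therefore $\phi \in G_0$. The main obstacle I anticipate is the bookkeeping in this last normalization step: one must rule out ``mixed'' words in which $V$ appears without an accompanying $S_{-1}$ (or appears together with a non-involutive symmetry), by showing such a word cannot be Poisson — the cleanest way is probably to check the Poisson condition is violated already at the level of the bracket $\{z,x\}$ or $\{z,y\}$, exploiting that the only affine generators rescaling $z$ nontrivially while remaining compatible with a potential Poisson structure are the reflective ones. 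An alternative, possibly cleaner route that avoids word combinatorics entirely: argue that $\PAut(\Aa(P))$ and $G_0$ both sit inside $\Aut(\Aa(P))$, show $\PAut$ contains $\Phi$, $\Psi$, $\Theta$ and (when $P$ is reflective) $\Omega_0$, and then invoke the known amalgamated free product structure of $\Aut(\Aa(P))$ from \cite{blancAutomorphismsPreFibered2011} together with Theorem \ref{teo amalgamated free product} to conclude the two subgroups coincide by comparing normal forms.
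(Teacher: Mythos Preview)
Your approach is essentially the same as the paper's. The ``bookkeeping'' you flag as the main obstacle is exactly where the work lies, and the paper resolves it by writing down explicit commutation relations showing that each of $V$, $R_\nu$, $S_\mu$ normalizes the subgroup generated by $\Theta_{\nu,0}$, $\Delta_g$, $\nabla_g$ (their relations (1)--(10), e.g.\ $\Delta_g\circ S_\mu = S_\mu\circ\Delta_{\mu g}$, $\nabla_g\circ R_\nu = R_\nu\circ\nabla_{\nu^{1-n}g(\nu^{-n}x)}$, etc.). With these in hand, every affine automorphism can be rewritten as $\omega\circ\phi$ with $\phi\in G_0$ and $\omega$ drawn from the short list $\{\id, V, R_\nu, S_\mu, R_\nu\circ V, S_\mu\circ V\}$; then one checks directly on the bracket $\{z,x\}$ or $\{z,y\}$ that the only Poisson $\omega$'s are $\id$ and $R_{-1}\circ V = S_{-1}\circ V = \Omega_0$, both in $G_0$. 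So rather than trying to pair each $V$ with an $S_{-1}$ inside a long word, you push \emph{all} the bad letters to one end at once --- this is cleaner and avoids the case analysis you anticipate. Your alternative route via the Blanc--Dubouloz amalgamated structure for $\Aut(\Aa(P))$ is less straightforward than it sounds: their amalgam decomposition uses different subgroups from the paper's $T,J,Q_1,Q_2$, so a direct comparison of normal forms would require translating between the two, which is more work than the commutation-relation argument.
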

\begin{proof}
It is sufficient to prove that $\PAut(\Aa(P))$ is generated by $\Theta_{\nu,0}$, $\Delta_g$ and $\Omega_0$ when $P$ is reflective, and by $\Theta_{\nu,0}$, $\Delta_g$ and $\nabla_h$ when $P$ is not reflective, for all $\nu\in\CC^\times$ and $g,h\in\CC[x]$.

Let us compute some relations between the generators in $\Aut(\Aa(P))$. The following relations hold whenever the automorphisms are defined.
\begin{enumerate}
    \item $V^2=\id$
    \item $\Theta_{\nu,0}\circ V=V\circ \Theta_{\nu^{-1},0}$
    \item $\Delta_g\circ V=V\circ\nabla_{-g}$ and $\nabla_g\circ V=V\circ\Delta_{-g}$
    \item $R_{\l}\circ R_{\mu}=R_{\l\cdot\mu}$ and $S_\l\circ S_\mu=S_{\l\cdot\mu}$
    \item $\Theta_{\nu,0}\circ R_\mu=R_\mu\circ \Theta_{\nu,0}$ and $ \Theta_{\nu,0}\circ S_\mu=S_\mu\circ \Theta_{\nu,0}$
    \item $V\circ R_\nu=R_\nu\circ V\circ \Theta_{\nu^n,0}$ and $V\circ S_\mu=S_{\mu}\circ V\circ \Theta_{\mu^n,0}$.
\end{enumerate}

Suppose now that $P$ is of the form $P(z)=z^iQ(z^d)$. We have:

\begin{equation}
    \Delta_g\circ S_\mu
    =\left(\mu^ix+\mu^i\sum_{j=1}^n{\frac{1}{j!}P^{(j)}(z)C^{j-1}g^j(y)},y, \mu z+\mu yg(y)\right)
\end{equation}
\begin{equation}\label{eq composition aut 1}
    S_\mu\circ \Delta_{\mu g}=\left(\mu^i x+\sum_{j=1}^n{\frac{\mu^j}{j!}P^{(j)}(\mu z)y^{j-1}g^j(y)},y, \mu z+\mu yg(y)\right).
\end{equation}
Since $P(z)=z^iQ(z^d)$, the $j$-th derivative $P^{(j)}$ has only terms of degree equal to $d(k-l)+i-j$, where $k$ is the degree of $Q$, for some $l\geq0$. Hence each term of the sum in \eqref{eq composition aut 1} has a factor of $\mu^{d(k-l)+i}=\mu^i$. Thus 
\begin{enumerate}\setcounter{enumi}{6}
    \item $\Delta_g\circ S_\mu=S_\mu\circ\Delta_{\mu g}$.
\end{enumerate}

Similarly, we get
\begin{align*}
    & \nabla_g\circ S_\mu=(\mu^i x, y+\sum_{j=1}^n{\frac{(-1)^j}{j!}P^{(j)}(z)x^{j-1}g^j(x)}, \mu z-\mu xg(x))\\
    &S_\mu\circ \nabla_{\mu^{1-i} g(\mu^{-i}x)}=(\mu^i x, y+\sum_{j=1}^n{\frac{(-1)^j}{j!}P^{(j)}(\mu z)\mu^{j-i}x^{j-1}g^j(x)},\mu z-\mu xg(x)).
\end{align*}
Again, from $P^{(j)}(\mu z)$ we can take out $\mu$, getting factors of the form $\mu^{d(k-l)+i-j}$, which is equal to $\mu^{i-j}$. Thus
\begin{enumerate}\setcounter{enumi}{7}
    \item $\nabla_g\circ S_\mu=S_\mu\circ \nabla_{\mu^{1-i} g(\mu^{-i}x)}.$
\end{enumerate}

Suppose now that $P(z)=z^n$. We have:
\begin{enumerate}\setcounter{enumi}{8}
    \item $\Delta_g\circ R_\nu=R_\nu\circ \Delta_{\nu g}$, since they are both equal to 
    \begin{equation*}
        (\nu^n x+\nu^n\sum_{i=1}^n{\binom{n}{i}z^{n-i}y^{i-1}g^i(y)},y, \nu z+\nu yg(y))
    \end{equation*}
    \item $\nabla_g\circ R_\nu=R_\nu\circ\nabla_{\nu^{1-n} g(\nu^{-n}x)}$, since they are both equal to 
    \begin{equation*}
        (\nu^n x, y+\sum_{j=1}^n{(-1)^j\binom{n}{j}z^{n-j}x^{j-1}g^j(x)}, \nu z-\nu xg(x) ).
    \end{equation*}
\end{enumerate}

Take any automorphism $\psi\in\Aut(\Aa(P))$. From Theorem \ref{teo automorphism affine type A} and Proposition \ref{lemma automorphism affine type A} we know that we can write it as a composition of automorphisms of the form $V,\Theta_{\nu,0},\Delta_g,S_\mu $ or $R_\mu$. Using relations (1) to (10) we can rewrite $\psi$ as $\psi=\omega\circ\phi$, where $\phi$ is in $G_0$ and $\omega$ is one of the following automorphisms: $\id, V, R_\nu, S_\mu, R_\nu\circ V$ or $S_\mu\circ V$. If $\omega=\id$ then $\phi\in G_0$. If $\nu$ or $\mu$ are equal to $(-1)$, then $R_{-1}\circ V=S_{-1}\circ V=\Omega_0$. Since $\Omega_0\in G_0$ then $\psi\in G_0$. We want to show that in all the other cases, $\omega$ is not a Poisson automorphism. It follows from a direct check:
$$\{R_\nu(z),R_\nu(y)\}=-\nu y\neq -y= R_\nu (\{z,y\}), $$
for all $\nu\neq 1$,
$$\{ S_\mu(z),S_\mu(y)\} =-\mu y\neq -y=S_\mu(\{ z,y\} ), $$
for all $\mu\neq 1$,
$$\{S_\mu\circ V(z),S_\mu\circ V(x)\}=-\mu y\neq y=R_\mu\circ V(\{z,x\}), $$
for $\mu\neq-1$, 
$$\{R_\nu\circ V(z),R_\nu\circ V(x)\}=-\nu y\neq y=R_\nu\circ V(\{z,x\}), $$
for $\nu\neq-1$.

Hence, for all $\psi\in\Aut(\Aa(P))$, either $\psi\in G_0$ or $\psi$ is a composition of a Poisson automorphism and a non-Poisson automorphism, hence $\psi$ is non-Poisson.

\end{proof}

From the relations in the proof of Theorem \ref{teo generatori poisson type a} we also get:

\begin{cor}
The subgroup $G$ of Poisson automorphisms is normal in $Aut(\Aa)$.
\end{cor}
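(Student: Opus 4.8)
The corollary states that the subgroup $G$ of Poisson automorphisms is normal in $\Aut(\Aa)$. The plan is to exploit the explicit generating sets and relations established in the proof of Theorem \ref{teo generatori poisson type a}. By that theorem, $G = G_0$ is generated by $\Theta_{\nu,0}$, $\Delta_g$, and (depending on whether $P$ is reflective) either $\Omega_0$ or $\nabla_h$; meanwhile $\Aut(\Aa)$ is generated by these same elements together with $V$, and — in the special cases where $P$ has the relevant form — the rescalings $R_\nu$ and symmetries $S_\mu$. To prove normality, it suffices to check that conjugating each generator of $G$ by each generator of $\Aut(\Aa)$ lands back in $G$.

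First I would handle conjugation by the generators of $G$ itself, which is immediate since $G$ is a group. Then I would use relations (1)--(3) to see that $V$ conjugates $\Theta_{\nu,0}$, $\Delta_g$, and $\nabla_g$ among themselves (up to inversion of parameters), and that $V \Omega_0 V^{-1} \in G_0$ when $P$ is reflective — indeed $\Omega_0 = S_{-1}\circ V$ with $S_{-1}$ central-ish behaviour, or one can simply check $V\Omega_0 V$ directly on generators. Next, for the cases $P(z) = z^i Q(z^d)$ or $P(z) = z^n$, relations (5), (7), (8), (9), (10) show that $S_\mu$ and $R_\nu$ conjugate $\Delta_g$ to $\Delta_{\mu g}$ (resp.\ $\Delta_{\nu g}$) and $\nabla_g$ to another $\nabla$-type automorphism, both of which lie in $G_0$; relation (5) shows they commute with $\Theta_{\nu,0}$; and when $\Omega_0$ is among the generators one checks conjugation directly. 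Since conjugation by each generator of $\Aut(\Aa)$ sends each generator of $G$ into $G$, and both are generated by finitely many such elements, $G$ is normal.

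The only mild subtlety is bookkeeping: one must make sure that in each ``special case'' (reflective $P$, $P = z^iQ(z^d)$, $P = z^n$) the full generating set of $\Aut(\Aa)$ from Theorem \ref{teo automorphism affine type A} and Lemma \ref{lemma automorphism affine type A} is accounted for, and that the parameter transformations appearing in relations (7)--(10) indeed keep us inside $\Phi \cong \CC[y]$ and $\Psi \cong \CC[x]$ — which they do, since $g \mapsto \mu g$ and $g(x) \mapsto \mu^{1-i}g(\mu^{-i}x)$ are self-maps of $\CC[y]$ and $\CC[x]$ respectively. There is no real obstacle here; the corollary is essentially a repackaging of the commutation relations already derived, and the argument is a short verification rather than a new idea.
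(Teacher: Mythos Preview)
Your proposal is correct and follows exactly the approach indicated by the paper, which simply states that the corollary follows ``from the relations in the proof of Theorem \ref{teo generatori poisson type a}''. Your plan of conjugating each generator of $G_0$ by each of the outer generators $V$, $R_\nu$, $S_\mu$ using relations (1)--(10) is precisely what those relations are there for; the only cosmetic remark is that the generating families are infinite (indexed by $\nu\in\CC^\times$, $g\in\CC[y]$, etc.), so ``finitely many such elements'' should read ``each element of the generating families''.
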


\begin{theorem}\label{teo poisson automorphisms type A}
    Let $n>2$. Then $\Aut(A(P))$ and $\PAut(\Aa(P))$ admit the amalgamated free product structure described in Theorem \ref{teo amalgamated free product} and
    \begin{equation*}
        \Aut(A(P))\cong\PAut(\Aa(P)),
    \end{equation*}
    for every deformation parameter $P$.
\end{theorem}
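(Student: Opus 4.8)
The plan is to combine the three main pillars already established in Section 2. By Theorem \ref{teo amalgamated free product}, the group $G_t$ has the stated amalgamated free product structure for $n>2$, and by the Corollary following it, the specializations $G_0$ and $G_1$ inherit exactly the same structure, with $G_1=\Aut(A(P))$ by \eqref{eq G_1=Aut}. So the only remaining ingredient is to identify $\PAut(\Aa(P))$ with $G_0$, and this is precisely the content of Theorem \ref{teo generatori poisson type a}. Therefore the proof is essentially a bookkeeping argument assembling these facts.

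Concretely, I would proceed as follows. First, invoke \eqref{eq G_1=Aut} to write $\Aut(A(P))=G_1$, and invoke Theorem \ref{teo generatori poisson type a} to write $\PAut(\Aa(P))=G_0$. Second, apply the Corollary to Theorem \ref{teo amalgamated free product} (the one asserting $G_0\cong G_1$ with the common amalgamated free product structure): since both $G_0$ and $G_1$ satisfy the hypotheses and the multidegree arguments of Lemmas \ref{lemma degree} and \ref{lemma multidegree} and Propositions \ref{prop multidegree 2a} and \ref{prop multidegree 2b} go through verbatim in $A(P)$ and $\Aa(P)$ (no $t$ appears in the leading coefficients), the uniqueness of the normal-form decompositions of Propositions \ref{prop decomposition G_t reflective} and \ref{prop decomposition G_t non reflective} holds in both. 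Hence by Theorem \ref{teo tecnico amalgamated free product} both groups are the amalgamated free products $T\ast_\Theta J$ (reflective case) or $Q_1\ast_\Theta Q_2$ (non-reflective case), with the abstract descriptions $(\CC[y]\rtimes\CC^\times)\ast_{\CC^\times}(\CC^\times\rtimes\ZZ/2\ZZ)$, $(\CC[y]\rtimes\CC^\times)\ast_{\CC^\times}H$, or $(\CC[y]\rtimes\CC^\times)\ast_{\CC^\times}(\CC[x]\rtimes\CC^\times)$ depending on $n$ parity and reflectivity of $P$. Third, observe that the isomorphism $G_0\cong G_1$ is realized explicitly: both are images of $G_t$ under the specialization homomorphisms $t\mapsto 0$ and $t\mapsto 1$, which are isomorphisms onto their images because the normal form of an element of $G_t$ is preserved and detected by multidegree in either specialization. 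Composing gives $\PAut(\Aa(P))=G_0\cong G_t\cong G_1=\Aut(A(P))$, for every parameter $P$ (no genericity assumption is needed — the group structure is always one of the three listed, the class being determined only by whether $P$ is reflective and the parity of $n$).

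There is essentially no hard obstacle left at this stage: all the substantive work — establishing the generators of $\Aut(A(P))$ (Theorem \ref{teo automorfismi quantizzazione type A}), the generators of $\Aut(\Aa(P))$ as an affine variety (Theorem \ref{teo automorphism affine type A} and Lemma \ref{lemma automorphism affine type A}), the commuting relations (Lemma \ref{lemma commuting relations G_t}), the multidegree computations, and the verification that the non-$G_0$ affine automorphisms fail to be Poisson (Theorem \ref{teo generatori poisson type a}) — has already been carried out. The one point that deserves a sentence of care is to make explicit that the comparison isomorphism is not merely an abstract coincidence of group presentations but is induced by the common lift $G_t$; this is what makes the statement canonical rather than a count. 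I would also remark that the case $n=2$ is excluded here only because the multidegree argument degenerates (the inequality $nk+n-1>k+1$ fails for $n=2$, $k=0$), and that case is covered separately via the $\mathfrak{sl}_2$ structure as discussed in the introduction.
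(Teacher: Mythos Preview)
Your proposal is correct and takes essentially the same approach as the paper, which simply states that the result ``follows directly from Theorem~\ref{teo amalgamated free product} and Theorem~\ref{teo generatori poisson type a}.'' You have merely unpacked that one-line proof by making explicit the role of the Corollary to Theorem~\ref{teo amalgamated free product} (identifying $G_0\cong G_1=\Aut(A(P))$) and of \eqref{eq G_1=Aut}, which is exactly what the paper intends.
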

\begin{proof}
    It follows directly from Theorem \ref{teo amalgamated free product} and Theorem \ref{teo generatori poisson type a}.
\end{proof}

The case $n=2$ is summarised in the following theorem, due to the work of Dixmier \cite{dixmierQuotientsSimplesAlgebre1973}, O'Fleury \cite{fleurySousgroupesFinisAut1998} and Naurazbekova and Umirbaev \cite{naurazbekovaAutomorphismsSimpleQuotients2021}.

\begin{theorem}
    For $n=2$ and for every deformation parameter $P$, we have a group isomorphism
    \begin{equation*}
        \Aut(A(P))\cong\PAut(\Aa(P)),
    \end{equation*}
    and they have an amalgamated free product structure given by
    \begin{equation*}
        L\ast_{L\cap T}T,
    \end{equation*}
    where $L$ is the subgroup of linear (Poisson) automorphisms and $T$ is the subgroup generated by triangular (Poisson) automorphisms and hyperbolic rotations. 
    
\end{theorem}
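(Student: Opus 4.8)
The plan is to establish the $n=2$ case by reduction to results already in the literature, following the strategy of Naurazbekova--Umirbaev. First I would observe that for $n=2$ the algebra $A(P)=\CC[z](\sigma,P)$ with $P$ monic quadratic and no linear term, i.e.\ $P(z)=z^2-c$ for some $c\in\CC$, is a central quotient of $U(\mathfrak{sl}_2)$: indeed the Casimir element acts as a scalar and $A(P)$ is isomorphic to $U(\mathfrak{sl}_2)/(\Omega-\lambda)$ for an appropriate $\lambda$ depending on $c$. Similarly $\Aa(P)=\CC[x,y,z]/(xy-z^2+c)$ is the corresponding quotient of the symmetric algebra $S(\mathfrak{sl}_2)$ by $(\Omega_{\mathrm{cl}}-\lambda)$, equipped with its Kirillov--Kostant Poisson bracket. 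Thus the statement becomes a statement about the quotients studied by Dixmier.

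Next I would invoke the description of $\Aut(A(P))$: by Dixmier \cite{dixmierQuotientsSimplesAlgebre1973} and Fleury \cite{fleurySousgroupesFinisAut1998}, the automorphism group of each such simple (or non-simple, handled uniformly) quotient of $U(\mathfrak{sl}_2)$ is generated by the hyperbolic rotations, the exponentials of the nilpotent inner derivations $\ad(x^m)$ and $\ad(y^m)$ (the $\Phi_{\lambda,m}$ and $\Psi_{\lambda,m}$), and --- when $P$ is reflective --- the involution $\Omega$; moreover these satisfy exactly the relations of an amalgamated free product $L\ast_{L\cap T}T$, where $T=\langle\Phi_{\lambda,m},\Theta_\nu\rangle$ (triangular automorphisms and hyperbolic rotations) and $L$ is the subgroup of linear automorphisms. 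On the Poisson side, Theorem~5 of \cite{naurazbekovaAutomorphismsSimpleQuotients2021} gives the analogous amalgamated free product description of $\PAut(\Aa(P))$ with the same factors, and in particular exhibits an explicit isomorphism $\Aut(A(P))\cong\PAut(\Aa(P))$ matching generators to generators. I would simply cite these two results and note that the correspondence is compatible with the $G_t$-picture of the present paper (the $\Phi,\Psi,\Theta,\Omega$ here specialize at $t=1$ and $t=0$ to the generators used by those authors), so that the abstract group structure in both cases is the claimed $L\ast_{L\cap T}T$.

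The one genuinely different point from the $n>2$ case, and the reason a separate theorem is needed, is that for $n=2$ the multidegree argument of Lemma~\ref{lemma multidegree} and Propositions~\ref{prop multidegree 2a}--\ref{prop multidegree 2b} breaks down: the crucial inequality $nk+n-1>k+1$ used there fails when $n=2$ and $k=0$ (both sides equal $1$), so the factors $\Phi$, $\Psi$, $\Theta$ no longer freely amalgamate --- instead one gets the coarser decomposition in terms of $T$ and $L$, reflecting the extra $\mathfrak{sl}_2$-symmetry in degree one. So the main obstacle, or rather the reason this is packaged as a citation rather than reproved, is precisely that the degree machinery of Section~\ref{section amalgamated free product} does not apply; the correct statement for $n=2$ is the one already obtained by Dixmier, Fleury and Naurazbekova--Umirbaev, and the proof is the observation that the $A(P)$ and $\Aa(P)$ of this paper for $n=2$ are exactly their objects of study, together with an unwinding of their generator correspondence to confirm it lands inside $G_1$ and $G_0$ respectively, forcing $\Aut(A(P))=G_1$, $\PAut(\Aa(P))=G_0$, and hence the isomorphism.
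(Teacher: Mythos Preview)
Your proposal is correct and matches the paper's approach: the theorem is stated there without proof, attributed in the surrounding text to Dixmier, O'Fleury, and Naurazbekova--Umirbaev, and your write-up is precisely this citation together with the (accurate) explanation of why the multidegree argument of Section~\ref{section amalgamated free product} fails at $n=2$. One small remark: for $n=2$ every deformation parameter $P(z)=z^2-c$ is even, hence reflective, so the qualifier ``when $P$ is reflective'' is vacuous here; also be careful not to reuse the symbol $\Omega$ for the Casimir, since the paper reserves it for the involution.
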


\begin{oss}
    The main difference for $n=2$ is that triangular automorphisms $\Delta_g$ can be linear (it is if and only if $g$ is a constant). This is a consequence of the fact that the case $n=2$ is the only one where the Poisson algebra structure is linear on the generators $x,y,z$, inducing a Lie algebra isomorphic to $\ss\ll_2$.
\end{oss}

\subsection{Isomorphism groupoids (type $\mathbf{A}$)}

In this section we compute $\Iso(A)$ and $\PIso(\Aa)$, the groupoid of (Poisson) isomorphisms between quantizations and deformations of $\CC[\mathbf{A_{n-1}}]$, respectively, and verify Conjecture \ref{conj main} for type $\mathbf{A}$.

Let us first look at the quantization case.
\begin{theorem}[{\cite[Theorem 3.28]{bavulaIsomorphismProblemsGroups2001}}]\label{teo iso generalized weyl algebras}
    Let $P_1,P_2\in\CC[z]$. Then the generalized Weyl algebras $A(P_1),A(P_2)$ are isomorphic if and only if there exist some $\eta,\a\in\CC$, with $\eta\neq0$, such that $P_2(z)=\eta P_1(\pm z+\a)$.
\end{theorem}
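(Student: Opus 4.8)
The plan is to reduce the classification of isomorphisms $A(P_1)\xrightarrow{\sim}A(P_2)$ to an analysis of how such an isomorphism acts on a canonical piece of structure, namely the subalgebra $\CC[z]$ and the element $z$ up to affine transformations. First I would observe that both algebras $A(P_i)$ carry the filtration by $\deg x=\deg y=n$, $\deg z=2$, and that $\gr A(P_i)\cong\CC[\mathbf{A_{n-1}}]$ independently of $P_i$; this already tells us that an isomorphism need not be filtered, so we cannot argue purely on associated graded. Instead, the cleaner invariant is the ``Euler-type'' element: one checks directly from the relations \eqref{eq defining A(P)} that $\sigma$ is implemented by $\operatorname{ad}$ of something involving $z$, and more usefully that $z$ is characterised (up to scalar and additive constant) as a generator of a maximal commutative subalgebra over which $A(P_i)$ is free of rank given by the GWA structure, together with the property that $\operatorname{ad}(z)$ has integer eigenvalues $\pm 1$ on the ``weight'' generators $x,y$. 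This is exactly the kind of intrinsic characterisation Bavula and Jordan exploit for generalized Weyl algebras.

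Next I would use the GWA machinery: given an isomorphism $\varphi\colon A(P_1)\to A(P_2)$, transport the $\ZZ$-grading (by $\operatorname{ad}(z)$-weight) and argue that $\varphi$ must send $\CC[z]\subset A(P_1)$ to $\CC[\varphi(z)]$ with $\varphi(z)$ of the form $\pm z+\a$ — the sign ambiguity being precisely the one coming from the automorphism $\Omega$ of Theorem \ref{teo automorfismi quantizzazione type A}, which swaps the roles of $x$ and $y$ and sends $z\mapsto 1-z$. Having normalised $\varphi(z)=\pm z+\a$, I would then look at the defining relations $yx=P_1(z)$ in $A(P_1)$: applying $\varphi$ gives $\varphi(y)\varphi(x)=P_1(\varphi(z))=P_1(\pm z+\a)$ inside $A(P_2)$, while $\varphi(y)\varphi(x)$ must be an element of $\CC[z]$ that generates the same relations as $P_2(z)$ does up to the GWA data, forcing $P_1(\pm z+\a)=\eta P_2(z)$ for a nonzero scalar $\eta$ (the scalar absorbing the freedom $x\mapsto \nu x$, $y\mapsto \nu^{-1}y$ combined with rescaling). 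Conversely, if $P_2(z)=\eta P_1(\pm z+\a)$, the maps $x\mapsto x$ (resp. $x\mapsto y$), $y\mapsto \eta y$ (resp. appropriate scalar multiple), $z\mapsto \pm z+\a$ are readily checked against \eqref{eq defining A(P)} to give an isomorphism; I would just verify the four relations hold.

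The main obstacle I anticipate is the rigidity step: showing that every isomorphism, filtered or not, must act on $z$ by an affine map $\pm z+\a$ rather than by something genuinely non-linear. The automorphisms $\Phi_{\l,m}$, $\Psi_{\l,m}$ of Theorem \ref{teo automorfismi quantizzazione type A} show that $z$ can be sent to $z+m\l y^m$, which is very much non-affine, so one is really claiming that after composing with suitable such automorphisms of $A(P_1)$ and $A(P_2)$ one can always arrange $\varphi(z)\in\CC z+\CC$; equivalently, that the subalgebra $\CC[z]$ is distinguished up to the action of $\Aut$. The way I would handle this is to use the GWA structure theorem of Bavula–Jordan directly — $\CC[z]$ is recovered as (a conjugate of) the unique maximal commutative subalgebra of a certain type, or via the normal element / height-one prime structure — so that the hard analytic work is offloaded onto Theorem \ref{teo automorfismi quantizzazione type A} and its underlying GWA results, which we are entitled to assume. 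With that rigidity in hand, the rest is the bookkeeping of the two relations $xy=P(z-1)$, $yx=P(z)$ under the normalised $\varphi$, which pins down $\eta$ and $\a$ and the sign, and the converse construction is a direct relation-check.
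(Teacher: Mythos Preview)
The paper gives no proof of this statement: Theorem~\ref{teo iso generalized weyl algebras} is quoted directly from \cite[Theorem~3.28]{bavulaIsomorphismProblemsGroups2001} with no argument supplied, so there is nothing in the paper itself to compare your proposal against.

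As an outline of the Bavula--Jordan argument your sketch is reasonable. The converse direction is straightforward and exactly as you describe, and for the forward direction you correctly isolate the crux --- that, after composing with automorphisms on both sides, any isomorphism must send $z$ to $\pm z+\a$ --- and you are right that this cannot be seen at the associated-graded level because of automorphisms like $\Phi_{\l,m}$. One caution: you phrase the rigidity step as being ``offloaded onto Theorem~\ref{teo automorfismi quantizzazione type A}'', but knowing the generators of $\Aut(A(P))$ for a single $P$ does not by itself tell you that $\varphi(\CC[z])$ lands in the $\Aut(A(P_2))$-orbit of $\CC[z]$; for that one needs an intrinsic characterisation of $\CC[z]$ (or of its orbit) inside the algebra, which is the genuine content of the Bavula--Jordan analysis you allude to via ``normal element / height-one prime structure''. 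So your sketch has the right architecture, but the step you yourself flag as the main obstacle is precisely where all the work lies, and it is not a corollary of the automorphism theorem alone. Since the present paper treats the result as a black box, none of this affects anything downstream.
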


\begin{cor}\label{cor iso quantizzazion type A}
    For all $n\geq2$, the groupoid $\Iso(A)$ is generated by the groups $\Aut(A(P))$, for all deformation parameters $P$, and by the isomorphism 
    \begin{equation*}
        \Omega:A(P(z))\rightarrow A((-1)^nP(-z)), \quad \Omega:=(y,(-1)^n x,1-z).
    \end{equation*}
\end{cor}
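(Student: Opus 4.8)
The plan is to deduce Corollary \ref{cor iso quantizzazion type A} from Theorem \ref{teo iso generalized weyl algebras} together with the description of $\Aut(A(P))$ in Theorem \ref{teo automorfismi quantizzazione type A}. First I would observe that, by Theorem \ref{teo iso generalized weyl algebras}, whenever $A(P_1)\cong A(P_2)$ the parameter $P_2$ must be of the form $\eta P_1(\pm z+\alpha)$; so to understand the groupoid it suffices to understand which \emph{normalized} parameters (monic of degree $n$, no degree $n-1$ term) arise, and to exhibit concrete isomorphisms realizing the allowed transformations. The idea is that any isomorphism $A(P_1)\to A(P_2)$ factors as (an automorphism of $A(P_1)$) followed by a ``canonical'' isomorphism changing the parameter, and that the only parameter change not already absorbed into automorphisms is the sign flip $z\mapsto -z$, realized by $\Omega$.

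The key steps, in order, would be: (1) Reduce to normalized parameters. Given arbitrary $P_1,P_2$ with $A(P_1)\cong A(P_2)$, note first that rescaling $x\mapsto \eta x$ (i.e. an automorphism of type $\Theta$ composed suitably, or rather the isomorphism $A(P)\to A(\eta P)$ sending $(x,y,z)\mapsto(\eta x, y, z)$) lets us assume $\eta=1$; and the shift $z\mapsto z+\alpha$ is \emph{not} an algebra map of $A(P)$ in general, but translating $z$ changes $P(z)$ to $P(z+\alpha)$, and we choose $\alpha$ to kill the degree $n-1$ coefficient — this is exactly the normalization built into the parametrization, so for normalized $P_1, P_2$ the relation $P_2(z)=\eta P_1(\pm z+\alpha)$ forces $\eta=1$ and $\alpha=0$, leaving only $P_2(z)=P_1(-z)$ or $P_2(z)=P_1(z)$ (one should check the sign: reading off the leading coefficient shows $\eta=(-1)^n$ is forced in the minus case, giving $P_2(z)=(-1)^nP_1(-z)$, matching the target $A((-1)^nP(-z))$). (2) Verify that $\Omega=(y,(-1)^nx,1-z)$ is a well-defined algebra isomorphism $A(P(z))\to A((-1)^nP(-z))$: substitute into the four defining relations \eqref{eq defining A(P)} of the target and check each holds, using $P_{\mathrm{target}}(z)=(-1)^nP(-z)$; this is the same computation as for the automorphism $\Omega$ in Theorem \ref{teo automorfismi quantizzazione type A}, only now the source and target parameters differ. (3) Conclude: any isomorphism $\phi:A(P_1)\to A(P_2)$ between normalized quantizations has either $P_2=P_1$, in which case $\phi\in\Aut(A(P_1))$, or $P_2=(-1)^nP_1(-z)$, in which case $\Omega^{-1}\circ\phi\in\Aut(A(P_1))$; hence $\phi$ lies in the subgroupoid generated by all the automorphism groups together with the single family of arrows $\Omega$. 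Since every object $A(P)$ is connected to $A((-1)^nP(-z))$ by $\Omega$ and these are the only non-trivial identifications, this generating set is also irredundant in the obvious sense.

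The main obstacle I anticipate is step (1): carefully bookkeeping how the scalar $\eta$ and the shift $\alpha$ in Theorem \ref{teo iso generalized weyl algebras} interact with the normalization convention (monic, no $z^{n-1}$ term). One must argue that imposing the normalization on both $P_1$ and $P_2$ rigidifies the pair $(\eta,\alpha)$ down to just the sign ambiguity $\pm z$, and pin down that the sign choice $-z$ forces $\eta=(-1)^n$ by comparing leading coefficients — so the target parameter is exactly $(-1)^nP(-z)$ and not merely $P(-z)$ up to scalar. A secondary subtlety is that one is asserting a statement about \emph{generators} of a groupoid, so one should be mildly careful that composing $\Omega$ with itself (landing back near $A(P)$) and with automorphisms indeed recovers all arrows; but this is immediate once steps (1)–(3) are in place, since every arrow is shown to be $\Omega^{\epsilon}$ followed by an automorphism with $\epsilon\in\{0,1\}$.
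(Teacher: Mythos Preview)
Your proposal is correct and follows essentially the same approach as the paper: invoke Theorem~\ref{teo iso generalized weyl algebras}, use the normalization (monic, no $z^{n-1}$ term) on both parameters to force $\alpha=0$ and $\eta=(\pm1)^n$, and then factor any isomorphism as $\Omega^{\epsilon}$ composed with an automorphism. The paper's proof is terser (it omits your explicit verifications in steps (2)--(3)); your detour in step~(1) about realizing $\eta$ and $\alpha$ via maps $A(P)\to A(\eta P)$ is unnecessary since both $P_1,P_2$ are already normalized, and your initial assertion ``forces $\eta=1$'' should read $\eta=(\pm1)^n$, as you yourself note in the parenthetical.
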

\begin{proof}
    Let $P_1$ be a deformation parameter, and $\eta,\a\in\CC$, with $\eta\neq0$. Suppose that $P_2=\eta P_1(\pm z+\a)$ is a deformation parameter. Then $\a=0$ because $P_1$ and $P_2$ have no term of degree $n-1$, and $\eta=(\pm 1)^n$ because $P_2$ and $P_1$ are monic. The claim now follows from Theorem \ref{teo iso generalized weyl algebras}.
\end{proof}

Let us consider the deformation case. We will first compute a set of generators for $\Iso(\Aa)$, the groupoid of affine isomorphisms between deformations. 

In \cite{blancAutomorphismsPreFibered2011} the authors study the isomorphisms between $\AA^1$-fibered surfaces. An affine surface $X$ is $\AA^1$-fibered if there exists a surjective morphism of affine varieties $\pi_X:X\rightarrow \AA^1$, with general fibres isomorphic to $\AA^1$, where $\AA^1$ denotes the complex affine line. Two $\AA^1$-fibered surfaces $(X,\pi)$ and $(X',\pi')$ are isomorphic as $\AA^1$-fibered surfaces if there are affine isomorphisms $f:X\rightarrow X'$ and $g:\AA^1\rightarrow\AA^1$ such that
\begin{equation*}
    \pi'\circ f=g\circ \pi.
\end{equation*}
Let $P\in\CC[z]$ be a polynomial of degree $n$. The algebra $\Aa(P)$ is $\AA^1$-fibered by $\p_P:\Aa(P)\rightarrow \CC[x]$, the projection on the variable $x$. 

\begin{lemma}\label{lemma iso fibered surfaces}
    Let $P,Q\in\CC[z]$ be of degree $n$. 
    \begin{enumerate}[(i)]
        \item Every fibration $\p ':\Aa(P)\rightarrow\AA^1$  is of the form $\p_P\circ\ga$, where $\ga$ is an affine automorphism of $\Aa(P)$;
        \item $(\Aa(P),\p_P)\cong(\Aa(Q),\p_Q)$ if and only if $Q(z)=\eta P(\a z+\beta)$, for some $\eta,\a,\b\in\CC$, with $\eta\neq0$.
    \end{enumerate}
\end{lemma}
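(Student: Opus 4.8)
The plan is to prove both parts of Lemma~\ref{lemma iso fibered surfaces} by combining the explicit description of $\Aut(\Aa(P))$ from Theorem~\ref{teo automorphism affine type A} (via Lemma~\ref{lemma automorphism affine type A}) with a careful analysis of which functions on $\Aa(P)$ can serve as the base coordinate of an $\AA^1$-fibration. For part~(i), the key observation is that a fibration $\pi':\Aa(P)\to\AA^1$ is determined by the choice of a function $f\in\Aa(P)$ such that the generic fibre $f^{-1}(\lambda)$ is isomorphic to $\AA^1$; I would argue that $f$ must be a \emph{variable} of $\Aa(P)$ in an appropriate sense, and then invoke the structure of the automorphism group to conclude that $f$ is the image of $x$ under some affine automorphism $\gamma$, so that $\pi'=\pi_P\circ\gamma$. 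Concretely, the surface $\Aa(P)$ carries the $\CC^\times$-action $\Theta_{\nu,0}$, and the function $x$ is (up to scalar) distinguished by the fibration structure; one shows any $\AA^1$-fibration coordinate lies in the orbit of $x$ under $\Aut(\Aa(P))$. I would lean on the results of Blanc--Dubouloz in \cite{blancAutomorphismsPreFibered2011} on $\AA^1$-fibered surfaces, which is precisely the setting set up just before the lemma, so part~(i) should follow essentially by citation plus translation into our coordinates.

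For part~(ii), I would argue in both directions. For the ``if'' direction, given $Q(z)=\eta P(\alpha z+\beta)$ with $\eta\neq 0$, one writes down an explicit isomorphism $\Aa(P)\to\Aa(Q)$: send $z\mapsto \alpha^{-1}(z-\beta)$ (an affine change of the $z$-coordinate), and rescale $x,y$ so that the relation $xy=P(z)$ transforms into $x'y'=Q(z')$; for instance $x\mapsto x$, $y\mapsto \eta^{-1}y$ after composing with the change of $z$, followed by a hyperbolic rotation if needed to make things match. Crucially this isomorphism commutes with the projections $\pi_P,\pi_Q$ up to an automorphism $g$ of $\AA^1$ (which is the affine map $x\mapsto x$ or a rescaling), so it is an isomorphism of $\AA^1$-fibered surfaces. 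For the ``only if'' direction, suppose $f:\Aa(P)\to\Aa(Q)$ and $g:\AA^1\to\AA^1$ satisfy $\pi_Q\circ f=g\circ\pi_P$. Then $g$ is an affine automorphism of the line, so $f$ carries the $x$-fibration of $\Aa(P)$ to the $x$-fibration of $\Aa(Q)$; restricting to a generic fibre $\AA^1$ and tracking how $f$ acts, one reads off that $f$ must send the coordinate functions in a way that forces the defining equation $xy=P(z)$ to become $x'y'=Q(z')$ after an affine substitution in $z$ and a rescaling. Extracting the relation $Q(z)=\eta P(\alpha z+\beta)$ is then a matter of comparing the ``curve at infinity'' or the ramification data of the two fibrations; since $z$ restricted to a generic fibre is a polynomial in the fibre coordinate, $f$ must act affinely on $z$ modulo $x$, giving the substitution $\alpha z+\beta$, and the leading coefficient comparison gives $\eta$.

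The main obstacle I anticipate is the ``only if'' direction of part~(ii): one must rule out more exotic isomorphisms of fibered surfaces that do not come from a naive affine substitution in $z$. The danger is that $f$ could a priori send $z$ to something like $z + x\cdot h(x)$ (a triangular-type automorphism composed with a genuine isomorphism), which would not immediately give a clean relation between $P$ and $Q$. The resolution is that such triangular parts, when composed into the fibered-surface isomorphism, do not change the polynomial parameter: the isomorphism type of $(\Aa(P),\pi_P)$ as a fibered surface depends only on $P$ up to the affine reparametrizations of $z$ appearing in the statement, because the triangular automorphisms $\Delta_g$ preserve $\Aa(P)$ itself (they are automorphisms, not isomorphisms to a different $\Aa(Q)$). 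So the honest statement is: modulo $\Aut(\Aa(P))$ acting on the source, a fibered isomorphism $\Aa(P)\to\Aa(Q)$ reduces to an affine change of the three coordinates, and comparing such changes with the two defining equations yields exactly $Q(z)=\eta P(\alpha z+\beta)$. Making this reduction precise — i.e. showing every fibered isomorphism factors as (affine coordinate change)$\,\circ\,$(automorphism) — is where I would spend most of the effort, again drawing on the amalgamated-product structure of $\Aut(\Aa(P))$ and the classification in \cite{blancAutomorphismsPreFibered2011}.
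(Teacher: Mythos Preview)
The paper's proof is a bare citation: ``A proof can be found in \cite[Theorem 5.4.5]{blancAutomorphismsPreFibered2011}. Point~(i) was first proved by Daigle in \cite{daigleLocallyNilpotentDerivations2003}.'' You already anticipate this --- you say part~(i) ``should follow essentially by citation'' from Blanc--Dubouloz and invoke the same reference for part~(ii) --- so where it matters your approach and the paper's coincide.

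The extended sketch you give (showing any fibration coordinate lies in the $\Aut$-orbit of $x$, factoring a fibered isomorphism as an affine coordinate change composed with an automorphism of $\Aa(P)$, tracking the $z$-coordinate modulo $x$, etc.) is a plausible outline of how one might reprove the Blanc--Dubouloz and Daigle results in this special case, but none of that appears in the paper and it is not needed there. If you were to actually carry out your sketch independently of those references, the delicate point you flag --- ruling out that $f(z)$ involves higher terms in $x$ --- would genuinely require work (essentially reproving Daigle's result on locally nilpotent derivations of these surfaces), so your plan is not yet a self-contained proof. But since you already name the correct external sources, the honest move, and the one the paper takes, is simply to cite them.
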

\begin{proof}
    A proof can be found in \cite[Theorem 5.4.5]{blancAutomorphismsPreFibered2011}. Point $(i)$ was first proved by Daigle in \cite{daigleLocallyNilpotentDerivations2003}.
\end{proof}

\begin{prop}\label{prop affine iso deformation type A}
    For all $n\geq2$, the groupoid $Iso(A)$ is generated by the groups $\Aut(\Aa(P))$, for all deformation parameters $P$, and by the isomorphisms
    \begin{equation*}
        R_\a:\Aa(P(z))\rightarrow \Aa(\a^{-n}P(\a z)),  \quad R_\a:=(\a^{n} x,y,\a z), \quad \forall\a\in\CC^\times.
    \end{equation*}
\end{prop}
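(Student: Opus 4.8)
The plan is to leverage the $\AA^1$-fibration structure, exactly as in Lemma \ref{lemma iso fibered surfaces}, together with the normalization constraints on the deformation parameters. First I would observe that any affine isomorphism $f\colon\Aa(P)\to\Aa(Q)$ between two deformations intertwines the two fibrations $\p_P$ and $\p_Q$ only up to an automorphism of the base and source: indeed $\p_Q\circ f$ is \emph{a} fibration $\Aa(P)\to\AA^1$, so by Lemma \ref{lemma iso fibered surfaces}(i) it equals $\p_P\circ\ga$ for some affine automorphism $\ga$ of $\Aa(P)$. Hence $f\circ\ga^{-1}$ is an isomorphism of $\AA^1$-fibered surfaces $(\Aa(P),\p_P)\to(\Aa(Q),\p_Q)$, and by Lemma \ref{lemma iso fibered surfaces}(ii) this forces $Q(z)=\eta P(\a z+\b)$ for some $\eta,\a\in\CC^\times$ and $\b\in\CC$. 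Thus every morphism in $\Iso(\Aa)$ is a composition of an element of some $\Aut(\Aa(P))$ with an explicit "coordinate change" isomorphism realizing the substitution $z\mapsto\a z+\b$, $x\mapsto\eta^{-1}x$ (or its inverse on the $y$-side), $y\mapsto\eta y$; one checks directly this triple does define an algebra isomorphism $\Aa(P)\to\Aa(\eta P(\a\cdot+\b))$.

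Next I would cut down the parameters $\eta,\a,\b$ using the standing normalization that deformation parameters are monic of degree $n$ with vanishing degree-$(n-1)$ coefficient, exactly mirroring the argument in Corollary \ref{cor iso quantizzazion type A} and in the proof sketched for Proposition \ref{prop affine iso deformation type A} in the quantization case. Comparing leading coefficients of $Q(z)$ and $\eta P(\a z+\b)$ gives $\eta\a^n=1$; comparing the degree-$(n-1)$ coefficients, which must both vanish, gives (after expanding $(\a z+\b)^n$ and $(\a z+\b)^{n-1}$) the relation $\b=0$, since the degree-$(n-1)$ term of $\eta P(\a z)$ is zero while the shift by $\b$ would introduce a nonzero $z^{n-1}$ term coming from $n\a^{n-1}\b z^{n-1}$ unless $\b=0$. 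So the only surviving freedom is $\b=0$ and $\eta=\a^{-n}$, i.e. $Q(z)=\a^{-n}P(\a z)$, and the corresponding isomorphism is precisely $R_\a=(\a^n x,\,y,\,\a z)$ as claimed (one verifies $R_\a$ respects the defining relation $xy-P(z)$ since $\a^n x\cdot y=\a^n(xy)=\a^n P(z)$, which matches $\a^n$ times $P$ evaluated at $\a^{-1}(\a z)$, giving the parameter $\a^{-n}P(\a z)$ after rescaling).

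Finally I would assemble the groupoid statement: every object of $\Iso(\Aa)$ is an $\Aa(P)$ for $P$ a normalized parameter, and every morphism factors as $R_\a\circ\phi$ with $\phi\in\Aut(\Aa(P))$ and $R_\a$ one of the listed coordinate rescalings; conversely each $R_\a$ and each automorphism genuinely lies in $\Iso(\Aa)$. Since the $R_\a$ compose as $R_\a\circ R_{\a'}=R_{\a\a'}$ (up to the identification of sources and targets) and conjugate automorphisms appropriately, the groupoid is generated by the $\Aut(\Aa(P))$ together with the $R_\a$, which is the assertion of Proposition \ref{prop affine iso deformation type A}.

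The main obstacle I expect is not the bookkeeping with $\eta,\a,\b$ — that is the same elementary comparison of coefficients used for the quantizations — but rather making sure the reduction via Lemma \ref{lemma iso fibered surfaces} is applied correctly: one must check that $\p_Q\circ f$ really is an $\AA^1$-fibration in the precise sense required (surjective onto $\AA^1$ with general fiber $\AA^1$), which follows because $f$ is an isomorphism and $\p_Q$ is such a fibration, and then that composing with $\ga^{-1}$ produces a genuine morphism of \emph{fibered} surfaces so that part (ii) applies. Once that structural point is secured, the rest is the routine normalization argument.
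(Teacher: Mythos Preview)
Your proposal is correct and follows essentially the same route as the paper: apply Lemma \ref{lemma iso fibered surfaces}(i) to straighten an arbitrary isomorphism into a fibered one, invoke part (ii) to obtain $Q(z)=\eta P(\alpha z+\beta)$, and then normalize to $\beta=0$, $\eta=\alpha^{-n}$ using the monic and vanishing-subleading-term conditions. The only spot where the paper is marginally more explicit is the final step---it observes that $R_\alpha^{-1}\circ(f\circ\gamma^{-1})\in\Aut(\Aa(P))$, since Lemma \ref{lemma iso fibered surfaces}(ii) only pins down the relation between $P$ and $Q$, not the fibered isomorphism itself---but your factorization $f=R_\alpha\circ\phi$ comes out the same.
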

\begin{proof}
    Let $P,Q$ be two deformation parameters, and $\phi:\Aa(P)\rightarrow\Aa(Q)$ be an affine isomorphism. Then $\phi$ is a fibered isomorphism between
    \begin{equation*}
        \phi:(\Aa(P),\pi_Q\circ\phi)\rightarrow(\Aa(Q),\pi_Q).
    \end{equation*}
    From Lemma \ref{lemma iso fibered surfaces} there exists $\ga\in\Aut(\Aa(P))$ such that $\pi_Q\circ\phi=\pi_P\circ\ga$. Thus $\psi:=\phi\circ\ga^{-1}:(\Aa(P),\pi_P)\rightarrow(\Aa(Q),\pi_Q)$ is a fibered isomorphism. By Lemma \ref{lemma iso fibered surfaces}, there exist $\eta,\a,\b\in\CC$, with $\eta\neq0$, such that $Q(z)=\eta P(\a z+\b)$. Since $P$ and $Q$ are deformation parameters, $\b=0$ and $\a^n=\eta$. 

    Consider the fibered isomorphism
    \begin{equation*}
        R_\a:(\Aa(P),\p_P)\rightarrow (\Aa(Q),\p_Q)  \quad R_\a:=(\a^{n} x,y,\a z).
    \end{equation*}
    Clearly $R_\a^{-1}\circ \psi\in\Aut(\Aa(P))$, so $\phi$ can be written as composition of $R_\a$ and an automorphism of $\Aa(P)$.
\end{proof}

\begin{theorem}\label{teo Poisson iso type A}
    For all $n\geq2$, the groupoid $\PIso(\Aa)$ is generated by the groups $\PAut(\Aa(P))$, for all deformation parameters $P$, and by the isomorphism 
    \begin{equation*}
        \t:\Aa(P(z))\rightarrow \Aa((-1)^nP(-z)) \quad \t:=(y,(-1)^n x,-z).
    \end{equation*}
\end{theorem}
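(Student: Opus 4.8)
The plan is to mirror exactly the strategy used in the affine case (Proposition \ref{prop affine iso deformation type A}), but now keeping track of which fibered isomorphisms are Poisson. First I would take two deformation parameters $P,Q$ and a Poisson isomorphism $\phi:\Aa(P)\to\Aa(Q)$. Forgetting the Poisson structure, $\phi$ is in particular an affine isomorphism, so by Proposition \ref{prop affine iso deformation type A} we can write $\phi=\gamma\circ R_\a$ for some $\a\in\CC^\times$ and some $\gamma\in\Aut(\Aa(P))$ — wait, more precisely $\phi=\chi\circ R_\a$ where $R_\a:\Aa(P)\to\Aa(\a^{-n}P(\a z))$ and $\chi$ is an affine automorphism of the target $\Aa(\a^{-n}P(\a z))$; after renaming we may assume $Q(z)=\a^{-n}P(\a z)$. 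Thus it suffices to determine, among the generators $R_\a$ of $\Iso(\Aa)$ and the affine automorphisms, which compositions are Poisson, and to check that the ones that are can all be built from the $\PAut$'s together with the single extra isomorphism $\t=(y,(-1)^nx,-z)$.

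The key computation is to check when $R_\a$ is a Poisson morphism. Using the Poisson bracket on $\Aa(P)$, namely $\{z,x\}=x$, $\{z,y\}=-y$, $\{x,y\}=-P'(z)$, and the corresponding bracket on the target with $Q=\a^{-n}P(\a z)$, one computes $\{R_\a(z),R_\a(x)\}=\{\a z,\a^n x\}=\a^{n+1}\{z,x\}_{\text{target}}=\a^{n+1}\a^n x$, which must equal $R_\a(\{z,x\})=\a^n x$; this forces $\a^{n+1}=1$... let me instead just compare directly: $R_\a$ is Poisson iff $\a^{n+1}=\a^n$ from the $\{z,x\}$ relation, i.e. $\a=1$, in which case $R_1=\id$. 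So no nontrivial $R_\a$ is Poisson. Exactly as in the proof of Theorem \ref{teo generatori poisson type a}, a composition $\chi\circ R_\a$ with $\chi$ a Poisson affine automorphism and $R_\a$ with $\a\neq 1$ is a composition of a Poisson map with a genuinely non-Poisson map, hence non-Poisson — unless this failure is "cancelled" by $\chi$ not actually being the Poisson representative. So one must instead argue: decompose $\phi$ using \emph{all} affine generators (Theorem \ref{teo automorphism affine type A}, Lemma \ref{lemma automorphism affine type A}), namely $V,\Theta_{\nu,0},\Delta_g,S_\mu,R_\mu$ and, across different parameters, the $R_\a$'s; using the commutation relations (1)--(10) from the proof of Theorem \ref{teo generatori poisson type a} together with the fact that $R_\a$ intertwines these, rewrite $\phi=\omega\circ\psi$ with $\psi\in\langle \PAut\text{'s}\rangle$ and $\omega$ one of $\id, R_\a, S_\mu, R_\a\circ V, S_\mu\circ V$ (where $R$ here ranges over rescalings \emph{between} algebras). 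Then check Poisson-ness of each $\omega$ by the bracket $\{\omega(z),\omega(x)\}$ versus $\omega(\{z,x\})$ and $\{\omega(z),\omega(y)\}$ versus $\omega(\{z,y\})$: all fail except $\id$ and the cases where the scalar is $-1$, and $R_{-1}\circ V = S_{-1}\circ V = (y,(-1)^n x,-z) = \t$.

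So the proof has three steps: (1) reduce an arbitrary Poisson isomorphism to an affine one and invoke the affine classification to get the list of generators $R_\a$; (2) run the rewriting argument of Theorem \ref{teo generatori poisson type a}, now in the groupoid rather than in a single automorphism group, to normalize any affine isomorphism as (Poisson part)$\,\circ\,\omega$ with $\omega$ in a short explicit list; (3) test each $\omega$ against the Poisson bracket and observe that the only surviving non-identity normal form is $\t$, which is the stated generator (and one checks $\t$ is indeed Poisson and has the claimed source and target, $\t:\Aa(P(z))\to\Aa((-1)^nP(-z))$, directly). The main obstacle I anticipate is step (2): the rewriting in Theorem \ref{teo generatori poisson type a} was carried out within a fixed algebra, and here the relations (7)--(10) linking $\Delta_g,\nabla_g$ with the rescalings/symmetries need to be re-derived (or reinterpreted) as identities of isomorphisms between \emph{different} $\Aa(P)$'s — in particular one must be careful that conjugating a $\Delta_g$ or $\nabla_h$ by $R_\a$ again lands in the subgroupoid generated by the $\PAut$'s, i.e. that $R_\a\circ\Delta_g\circ R_\a^{-1}$ is again of the form $\Delta_{g'}$ (up to a $\Theta$), which follows from the explicit formula for $\Delta_g$ but should be stated cleanly. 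Everything else is the same bracket-comparison bookkeeping as in the proof of Theorem \ref{teo generatori poisson type a}.
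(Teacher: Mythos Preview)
Your proposal is correct and follows essentially the same route as the paper: reduce to the affine classification via Proposition~\ref{prop affine iso deformation type A}, peel off a Poisson automorphism using the normal form from the proof of Theorem~\ref{teo generatori poisson type a}, and then test the residual $\omega$ against the bracket.

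Your anticipated obstacle in step~(2) is a non-issue, and the paper's organization shows why. Once you write $\phi = R_\a \circ \gamma$ with $\gamma \in \Aut(\Aa(P))$, the factor $\gamma$ lives entirely inside the automorphism group of the \emph{single} algebra $\Aa(P)$; so the rewriting $\gamma = \omega \circ \xi$ with $\xi \in \PAut(\Aa(P))$ and $\omega \in \{\id, V, R_\nu, S_\mu, R_\nu\circ V, S_\mu\circ V\}$ is literally the statement already proved in Theorem~\ref{teo generatori poisson type a}, with no need to re-derive relations (7)--(10) across different algebras. You are then reduced to checking when the composite $R_\a \circ \omega$ (now a map between two possibly different deformations) is Poisson, which is the same bracket comparison you sketched; the only surviving nontrivial case is $R_{-1}\circ V = \tau$. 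So you can drop the groupoid-level conjugation formulas like $R_\a \circ \Delta_g \circ R_\a^{-1}$ entirely.
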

\begin{proof}
    Let $P,Q$ be two deformation parameters, and $\phi:\Aa(P)\rightarrow\Aa(Q)$ be a Poisson isomorphism. In particular, $\phi$ is an affine isomorphism, so, by Proposition \ref{prop affine iso deformation type A}, $\phi=R_\a\circ\ga$, for some $\ga\in\Aut(\Aa(P))$. By the proof of Theorem \ref{teo generatori poisson type a}, $\ga$ can be written as $\omega\circ\xi$, where $\xi\in\PAut(\Aa(P))$ and $\omega$ is one of the following affine automorphisms: $\id, V, R_\nu, S_\mu, R_\nu\circ V$ or $S_\mu\circ V$. Since $\phi$ is Poisson, $R_\a\circ\omega$ needs to be too. With the same computations as in the proof of Theorem \ref{teo generatori poisson type a}, we can check that the only case where $R_\a\circ\omega$ is Poisson is $\a=-1$ and $\omega=V$. In that case, $R_{-1}\circ V=\tau$.
\end{proof}

Putting together Corollary \ref{cor iso quantizzazion type A} and Theorem \ref{teo Poisson iso type A} we have verified Conjecture \ref{conj main} for type $\mathbf{A}$.

\begin{theorem}\label{teo iso type A}
    Let $n\geq 4$. We have an isomorphism of groupoids
    \begin{equation*}
        \Iso(A)\cong\PIso(\Aa).
    \end{equation*}
\end{theorem}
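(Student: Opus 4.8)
The plan is to compare the two groupoids generator-by-generator, using the presentations obtained earlier in the section. By Corollary \ref{cor iso quantizzazion type A}, the groupoid $\Iso(A)$ is generated by the automorphism groups $\Aut(A(P))$ together with the single isomorphism $\Omega\colon A(P(z))\to A((-1)^nP(-z))$, $\Omega=(y,(-1)^nx,1-z)$. By Theorem \ref{teo Poisson iso type A}, the groupoid $\PIso(\Aa)$ is generated by the groups $\PAut(\Aa(P))$ together with $\tau\colon\Aa(P(z))\to\Aa((-1)^nP(-z))$, $\tau=(y,(-1)^nx,-z)$. Notice that in both cases the objects of the groupoid are indexed by the same set, namely monic polynomials $P$ of degree $n$ with no term of degree $n-1$, and in both cases the ``connecting'' isomorphism joins the object $P$ to the object $(-1)^nP(-z)$. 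So the combinatorial shape of the two groupoids is already the same; what remains is to match the vertex groups and the extra generator compatibly.

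First I would invoke Theorem \ref{teo poisson automorphisms type A} (for $n>2$, in particular for $n\geq 4$), which gives for every deformation parameter $P$ an isomorphism $\Aut(A(P))\cong\PAut(\Aa(P))$; concretely this is the identification $G_1\cong G_0$ of the Corollary following Theorem \ref{teo amalgamated free product}, sending $\Phi_{\lambda,m,1}\mapsto\Phi_{\lambda,m,0}$, $\Psi_{\lambda,m,1}\mapsto\Psi_{\lambda,m,0}$, $\Theta_{\nu,1}\mapsto\Theta_{\nu,0}$, and (when $P$ is reflective) $\Omega_1\mapsto\Omega_0$. Thus I would define, for each object $P$, the vertex-group isomorphism $F_P\colon\Aut(A(P))\xrightarrow{\ \sim\ }\PAut(\Aa(P))$ to be this map. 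Since these maps are defined uniformly on generators that are specializations of the $t$-linear automorphisms of $A_t(P)$, they are mutually compatible across the family of objects.

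Next I would extend $F$ to the connecting generator by decreeing $F(\Omega)=\tau$; that is, the functor $F\colon\Iso(A)\to\PIso(\Aa)$ is the identity on objects, is $F_P$ on each vertex group, and sends $\Omega\colon A(P)\to A((-1)^nP(-z))$ to $\tau\colon\Aa(P)\to\Aa((-1)^nP(-z))$. To see this is well-defined and an isomorphism of groupoids, I must check that $F$ respects all relations among the generators. The vertex-group relations are handled by each $F_P$ being a group isomorphism. The remaining relations are the ``mixed'' ones expressing how $\Omega$ conjugates the vertex-group generators into the vertex group of $(-1)^nP(-z)$ — these are exactly the analogues of the relations in Lemma \ref{lemma commuting relations G_t} (with $\Omega_t$ replaced by the object-changing $\Omega$, and $t$ specialized to $1$ on the quantization side and $0$ on the deformation side). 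One verifies, by the same direct computations as in Lemma \ref{lemma commuting relations G_t} and in the proofs of Theorems \ref{teo generatori poisson type a} and \ref{teo Poisson iso type A}, that $\Omega$ satisfies on the quantization side precisely the relations that $\tau$ satisfies on the deformation side; e.g. $\Omega^2=\Theta_{(-1)^n,1}\circ(\text{shift})$ matches $\tau^2=\Theta_{(-1)^n,0}$, and $\Omega\circ\Theta_{\nu,1}=\Theta_{\nu^{-1},1}\circ\Omega$ matches $\tau\circ\Theta_{\nu,0}=\Theta_{\nu^{-1},0}\circ\tau$, and similarly for conjugation of $\Phi$'s and $\Psi$'s. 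Conversely $F$ is invertible because the same recipe, run backwards, sends the generators of $\PIso(\Aa)$ to those of $\Iso(A)$ and the relations match in the same way.

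The main obstacle is making rigorous the claim that $\Iso(A)$ and $\PIso(\Aa)$ admit presentations in which the \emph{only} relations are the vertex-group relations together with these explicit mixed relations involving the connecting isomorphism — i.e. that there are no hidden identities among compositions of the listed generators once one changes objects. One clean way to finesse this is to package both groupoids as the action groupoid of a single group acting on the object set: the extended group $\widetilde{G}$ generated by all the vertex-group generators (uniformly in $P$) together with one symbol $\Omega$, modulo the relations of Lemma \ref{lemma commuting relations G_t}-type, acts on the set of parameters $P$ via $\Omega\cdot P=(-1)^nP(-z)$ and the vertex generators fixing $P$; then Corollary \ref{cor iso quantizzazion type A} and Theorem \ref{teo Poisson iso type A} say that $\Iso(A)$ and $\PIso(\Aa)$ are both the corresponding action groupoid, with $\widetilde G$ acting on objects in the same way and the point stabilizers being $\Aut(A(P))$, resp. $\PAut(\Aa(P))$, which are isomorphic by Theorem \ref{teo poisson automorphisms type A}. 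Since $F$ is the identity on objects, commutes with the $\widetilde G$-action, and restricts to the stabilizer isomorphisms $F_P$, it is an isomorphism of action groupoids, hence $\Iso(A)\cong\PIso(\Aa)$.
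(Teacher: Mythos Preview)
Your approach is essentially the paper's: the paper simply states that Theorem~\ref{teo iso type A} follows from ``putting together Corollary~\ref{cor iso quantizzazion type A} and Theorem~\ref{teo Poisson iso type A}'' (implicitly together with Theorem~\ref{teo poisson automorphisms type A}), and your first two paragraphs spell out exactly this---same object set, same connected components (since $\Omega$ and $\tau$ connect $P$ to $(-1)^nP(-z)$), isomorphic vertex groups---which already determines the groupoid up to isomorphism.

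Your worry about ``hidden relations'' and the action-groupoid workaround are unnecessary, and the latter is not quite right as written: there is no single group $\widetilde G$ containing all the vertex groups, since $\Aut(A(P))$ genuinely depends on whether $P$ is reflective (Theorem~\ref{teo amalgamated free product}). The cleaner way to close the argument is the general fact that a groupoid is determined, up to an isomorphism that is the identity on objects, by its object set, its partition into connected components, and the isomorphism type of one vertex group per component; all three data coincide for $\Iso(A)$ and $\PIso(\Aa)$ by the cited results, so no relation-checking beyond $\Omega^2=\Theta_{(-1)^n,1}$ versus $\tau^2=\Theta_{(-1)^n,0}$ is needed.
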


\section{Type D}\label{section 3}

Once again, let $V$ be a complex vector space of dimension $2$. Choose a basis for $V$, and let $X,Y$ be the corresponding coordinate functions. We then identify $SL(V)$ with $SL_2(\CC)$. Take $\Gamma=BD_{n-2}\subset SL_2(\CC)$ the binary dihedral group of order $4(n-2)$, with $n\geq 4$. The group $\Gamma$ is generated by
\begin{equation*}
    \sigma:=\begin{pmatrix}
e^{\pi i/(n-2) } & 0\\
0 & e^{-\pi i/(n-2) }
\end{pmatrix}, \hspace{4mm}    \tau:=\begin{pmatrix}
0 & 1\\
-1 & 0
\end{pmatrix}.
\end{equation*}

The quotient $V/\Gamma$ is the Kleinian singularity of type $\mathbf{D_{n}}$. The algebra of functions $\CC[V/\Gamma]=\CC[V]^\Gamma$ is generated by the polynomials $X^2Y^2,(X^{2(n-2)}+Y^{2(n-2)})$ and $XY(X^{2(n-2)}-Y^{2(n-2)})$. We have
\begin{equation*}
    \CC[\mathbf{D_{n}}]\cong\CC[x,y,z]/(x^{n-1}+xy^2+z^2)\, ,
\end{equation*}
for all $n\geq 4$. Considering $X,Y$ to be of degree $1$, we have a grading on $\CC[\mathbf{D_{n}}]$ given by $\deg x=4$, $ \deg y=2(n-2)$ and $\deg z=2(n-1)$. We give $\CC[\mathbf{D_{n}}]$ the structure of a graded Poisson algebra as described in Remark \ref{oss generale poisson structure on C[x,y,z]}, with Poisson bracket of degree $-2$. This structure is equivalent to the one induced by the standard symplectic structure on $V$, after a suitable renormalization. Explicitly, we have

\begin{equation*}
    \{x,y\}=2z\, , \hspace{5mm} \{x,z\}=-2xy, \hspace{5mm} \{y,z\}=(n-1)x^{n-2}+y^2\, .
\end{equation*}

\subsection{Deformations and quantizations}

We have explicit presentations for both deformations and quantizations of the algebra $\CC[\mathbf{D_{n}}]$. In both cases, they are parameterised by a pair $(Q,\gamma)$, where $Q\in\CC[x]$ is a monic polynomial of degree $n-1$, and $\gamma\in\CC$.

Fix $n\geq4$ and denote by $\Dd_n(Q,\gamma)$ the deformation associated to the parameter $(Q,\gamma)$ of the algebra $\CC[\mathbf{D_{n}}]$. Explicitly, we have
\begin{equation*}
    \Dd_n(Q,\gamma)=\CC[x,y,z]/(Q(x)+xy^2+z^2-\gamma y).
\end{equation*}
These algebras come from specializing the semi-universal deformation of the Kleinian singularity (see for example \cite[Table 3]{katzGorensteinThreefoldSingularities1992}).
We give $\Dd_n(Q,\gamma)$ the structure of a Poisson algebra as in Remark \ref{oss generale poisson structure on C[x,y,z]},
\begin{equation*}
    \{x,y\}=\pdv{\psi}{z}=2z, \ \ \ \{x,z\}=-\pdv{\psi}{y}=-2xy+\gamma, \ \ \ \{y,z\}=\pdv{\psi}{x}=Q' (x)+y^2.
\end{equation*}
The algebra $\Dd_n(Q,\gamma)$ is a filtered Poisson algebra, with filtration induced by $\deg x=4 $, $\deg y=2(n-2)$ and $\deg z=2(n-1)$, with Poisson bracket of degree $-2$, i.e. $\{F_k,F_m\}\subset F_{m+k-2}$ for all filtration terms $F_k,F_m$.

We denote by $D_n(Q,\ga)$ the quantization associated to the parameter $(Q,\ga)$. The algebras $D_n(Q,\ga)$ were studied by Levy in \cite{levyIsomorphismProblemsNoncommutative2009}. The algebra $D_n(Q,\ga)$ is the $\CC$-algebra generated by $x,y,z$, subject to the relations
\begin{equation}
    [x,y]=2z, \quad [x,z]=-2xy+2z+\ga, \quad [y,z]=y^2+P(x)-n, 
\end{equation}
\begin{equation}
    Q(x)+x(y^2-n)+z^2-2yz-\ga y=0,
\end{equation}
where $P(x)$ is the unique degree $n-2$ polynomial satisfying
\begin{equation*}
    Q(-x(x-1))-Q(-x(x+1))=(x-1)P(-x(x-1))+(x+1)P(-x(x+1)).
\end{equation*}

\begin{theorem}[{\cite[Theorem 2.22 \& 3.6]{levyIsomorphismProblemsNoncommutative2009} }]\label{teo iso quantizations type D}

    For all $n\geq 4$, there is an isomorphism $\Sigma:D_n(Q,\ga)\rightarrow D_n(Q,-\ga)$ given by $\Sigma:=(x,-y,-z)$.    
    If $n=4$, write $Q(x)=x^3+ax^2+bx+c$. Set 
    \begin{equation*}
    \begin{aligned}
        &b'=\frac{1}{8}(3a^2-4b-12 i \gamma), \\
        &c'=c+\frac{1}{16}(a^3-4ab-4ia\gamma), \\
        &\gamma'=\frac{i}{8}(a^2-4b+4i\gamma),
    \end{aligned}
    \end{equation*}
    and let $Q'(x)=x^3+ax^2+b'x+c'$. There is an isomorphism $T:D_n(Q,\ga)\rightarrow D_n(Q',\ga')$ given by
    \begin{equation*}
        T:=\left(-\frac{1}{2}x+\frac{i}{2}y-\Big(1+\frac{1}{4}a\Big),\frac{3i}{2}x-\frac{1}{2}y+i\Big(1+\frac{1}{4}a\Big),z\right),
    \end{equation*}
    \begin{enumerate}[(i)]
        \item $\Sigma$ and $T$ satisfy the $S_3$ relations, whenever the compositions make sense. In particular $\Sigma^2=\id$ and $T^3=\id$;
        \item The only possible isomorphisms between quantizations $D(Q,\ga)$ are $\id, \Sigma, T $, $ T^{-1}, (\Sigma\circ T) $ and $ (\Sigma\circ T^{-1})$.
    \end{enumerate}
    In particular, the group $\Aut(D_n(Q,\gamma))$ has the following description. If $n>4$:
\begin{itemize}
    \item if $\gamma\neq0$, then $\Aut(D_n(Q,\gamma))=\langle\id\rangle$;
    \item if $\gamma=0$, then $\Aut(D_n(Q,\gamma))=\ZZ/2\ZZ$, generated by $\Sigma$.
\end{itemize}
If $n=4$:
\begin{itemize}
    \item if $\gamma=0$ and $b=a^2/4$, then $\Aut(D_4(Q,\gamma))=S_3$, generated by $T$ and $\Sigma$;
    \item if $\gamma\neq0$ and $b=a^2/4-i\gamma$ (respectively $b=a^2/4+i\gamma$), then $\Aut(D_4)=\ZZ/2\ZZ$, generated by $\Sigma\circ T$ (respectively $\Sigma\circ T^{-1}$);
    \item if $\gamma=0$ and $b\neq a^2/4$, then $\Aut(D_4(Q,\gamma))=\ZZ/2\ZZ$, but this time it is generated by $\Sigma$;
    \item if $\gamma\neq0$ and $b\neq a^2/4\pm i\gamma$, then $\Aut(D_4(Q,\gamma))=\langle\id\rangle$.
\end{itemize}
\end{theorem}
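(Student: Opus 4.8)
The plan is to first exhibit $\Sigma$ and $T$ explicitly and then prove there is nothing else. For $\Sigma$ one substitutes $(x,-y,-z)$ into the four defining relations of $D_n(Q,\gamma)$: each relation is respected after the substitution and the only net change is $\gamma\mapsto-\gamma$, so $\Sigma$ is an isomorphism onto $D_n(Q,-\gamma)$. For $T$ (only when $n=4$) one substitutes the displayed linear-affine change of variables; the commutator relations go to commutator relations of the same shape, and the quadratic relation becomes a quadratic relation whose cubic part is again monic but which, a priori, acquires a term of degree $n-1=2$. Demanding that this term vanish — so that the image is again of the form $D_4(Q',\gamma')$ — is exactly what forces the stated values of $b',c',\gamma'$; they are dictated, not guessed. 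With $\Sigma,T$ in hand the $S_3$-relations are bookkeeping with affine maps: $\Sigma^2=\id$ is immediate, the linear part of $T$ on $\langle x,y\rangle$ has trace $-1$ and determinant $1$ hence $M^2+M+1=0$ and $M^3=1$, and tracking the affine shifts and the induced parameter moves gives $T^3=\id$ and $\Sigma T\Sigma^{-1}=T^{-1}$ whenever the composites make sense.

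To show these are the only isomorphisms $\phi\colon D_n(Q,\gamma)\to D_n(Q',\gamma')$, I would reduce to the graded level. The crucial first step is that $\phi$ is compatible with the canonical filtration ($\deg x=4$, $\deg y=2(n-2)$, $\deg z=2(n-1)$): this filtration is intrinsic because its associated graded is the domain $\CC[\mathbf{D_n}]=\CC[x,y,z]/(x^{n-1}+xy^2+z^2)$, and the pieces $F_{\leq k}$ are recoverable from the abstract algebra (via Gelfand--Kirillov-type growth, or as spans of bounded-length products of minimal generators). Then $\gr\phi$ is a $\ZZ_{\geq0}$-graded automorphism of $\CC[\mathbf{D_n}]$, and one must (a) classify these, (b) decide which lift to filtered isomorphisms and with what effect on $(Q,\gamma)$, and (c) check such a lift is unique, so that $\phi$ is determined by $\gr\phi$.

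The graded classification divides on the weights. For $n>4$ the weights $4<2(n-2)<2(n-1)$ are distinct, so a graded automorphism is diagonal, $(x,y,z)\mapsto(\alpha x,\beta y,\delta z)$ with $\alpha^{n-1}=\alpha\beta^2=\delta^2$; on passing back to the filtered algebra the monic normalisation of $Q$ kills the scaling freedom and only sign choices remain, leaving $\Sigma$ as the only nontrivial isomorphism type, so $\Aut(D_n(Q,\gamma))$ is $\langle\id\rangle$ or $\ZZ/2\ZZ$ according as $\Sigma$ moves or fixes $(Q,\gamma)$, i.e.\ according as $\gamma\neq0$ or $\gamma=0$. For $n=4$ one has $\deg x=\deg y=4$, $\deg z=6$, so a graded automorphism acts on the $2$-dimensional degree-$4$ space $\langle x,y\rangle$ by some $g\in GL_2(\CC)$ and on $z$ by a scalar; preservation of $x^3+xy^2+z^2$ confines $g$, up to scaling, to the stabiliser of the binary cubic $x^3+xy^2$, and modulo the scalings (which act freely by rescaling parameters) one is left with a copy of $S_3$ generated by the graded parts of $\Sigma$ and $T$. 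Lifting these six graded symmetries, computing their action on $(Q,\gamma)$ — where the explicit $b',c',\gamma'$ resurface — and reading off $\Aut(D_4(Q,\gamma))$ as the stabiliser of $(Q,\gamma)$ in $S_3$ yields the four listed sub-cases.

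The main obstacle is the intrinsic characterisation of the canonical filtration, i.e.\ proving that any algebra isomorphism must respect it; once that is secured the rest is essentially bookkeeping, although for $n=4$ one still has to verify carefully that each of the six graded symmetries genuinely lifts (not just its linear part) and to pin down the lift's action on the parameter space. An alternative, closer to the type $\mathbf{A}$ argument of Section \ref{section 2}, would be to characterise intrinsically the ``Cartan'' subalgebra: for $n>4$ show $\CC[x]$ (the unique minimal-degree direction) is preserved up to an affine change $x\mapsto\alpha x+\beta$ and then solve the relations for the images of $y$ and $z$; for $n=4$ work instead with the $2$-dimensional degree-$4$ subspace and recover the $S_3$ as above.
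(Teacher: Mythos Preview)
The paper does not prove this theorem; it is quoted verbatim from Levy \cite{levyIsomorphismProblemsNoncommutative2009}, so there is no in-paper argument to compare against. Your outline is a plausible reconstruction of how such a result could be proved, and the verification of $\Sigma$, $T$, and the $S_3$-relations is indeed straightforward bookkeeping as you describe.

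The substantive content is part (ii), and here the step you yourself flag as the main obstacle --- that every algebra isomorphism $D_n(Q,\gamma)\to D_n(Q',\gamma')$ respects the canonical filtration --- is genuinely the crux, and your proposed justifications are not sufficient as stated. That the associated graded is a domain makes the degree function multiplicative on a fixed filtered algebra, but does not by itself show the filtration is determined by the abstract algebra structure; the ``span of bounded-length products of minimal generators'' heuristic presupposes control over all generating sets, which is exactly what is in question; and a GK-growth argument yields asymptotic growth, not the precise filtration pieces. You would need either a sharper invariant (e.g.\ identifying $\CC[x]$ intrinsically as a distinguished commutative subalgebra, or exploiting the ad-nilpotent elements) or to consult Levy's actual argument, which uses structural features specific to these algebras.

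For comparison, the paper's own proof of the Poisson analogue (Theorems~\ref{teo iso Dn} and~\ref{teo Poisson iso Dn}) takes a completely different, geometric route: each deformation $\Dd_n(Q,\gamma)$ is compactified to a projective surface $X_n(Q,\gamma)$, and Proposition~\ref{Prop Blackbox} shows, via intersection theory on the boundary divisor $C_n\cup F_+\cup F_-$, that every affine isomorphism extends to the compactification. The classification then reduces to tracking how the three boundary curves (with self-intersections $3-n,-1,-1$) can be permuted, which immediately explains why $n=4$ is special and where the $S_3$ comes from. This approach sidesteps any filtration argument entirely, at the price of being available only in the commutative setting; your algebraic strategy, if the filtration step can be made rigorous, would have the merit of handling the non-commutative quantizations directly.
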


\begin{oss}
    Our parametrization of the quantizations $D_n(Q,\ga)$ differs slightly from Levy's. The algebra defined in \cite[Definition 1.5]{levyIsomorphismProblemsNoncommutative2009} coincides with our $D_n(Q+nx,\ga)$.
\end{oss}

\subsection{Isomorphisms of deformations}

In this section we compute all the Poisson isomorphisms between the deformations $\Dd_n(Q,\gamma)$ and confirm Conjecture \ref{conj main}.

Let us first compute the groupoid $\Iso(\Dd_n)$ of isomorphisms between the deformations $\Dd_n(Q,\gamma)$ as affine varieties. We follow a similar method to \cite{blancNonrationalityFibrationsAssociated2015}. We embed each $\Dd_n(Q,\gamma)$ into a projective normal surface $X_n(Q,\gamma)$, such that every point in $X_n(Q,\gamma)\setminus\Dd_n(Q,\gamma)$ is smooth in $X_n(Q,\gamma)$. We then prove that every isomorphism $\Dd_n(Q,\gamma)\rightarrow\Dd_n(Q',\gamma')$ extends to an isomorphism of $X_n(Q,\gamma)\rightarrow X_n(Q',\gamma')$. This way we reduce ourselves to study the groupoid $\Iso(X_n, \Dd_n)$ of isomorphisms $X_n(Q,\gamma)\rightarrow X_n(Q',\gamma')$ that restrict to $\Dd_n(Q,\gamma)\rightarrow\Dd_n(Q',\gamma')$, which is simpler to compute.  

We construct $X_n(Q,\gamma)$ as a hypersurfaces of $F_{a,b}$, a $\PP^2$-bundle
over $\PP^1$. This $\PP^2$-bundle is $\PP(\Oo_{\PP^1}\oplus\Oo_{\PP^1} (a)\oplus
\Oo_{\PP^1} (b))$, and can be viewed as the gluing of $U_{a,b,0} = \PP^2\times\CC$ and $U_{a,b,\infty} = \PP^2 \times\CC$ along $\PP^2 \times\CC^*$, where the identification map is given by the involution
$$((w : y : z), x)\longrightarrow \left((w : x^{-a}y : x^{-b}z), \ \frac{1}{x}\right).$$
The $\PP^2$-bundle is
given by the map $F_{a,b}\rightarrow\PP^1$ corresponding to $((w : y : z), x) \mapsto (x : 1)$ in the first
chart and $((w : y : z), x) \mapsto (1 : x)$ in the second one.
If $n = 2k$, we take the $\PP^2$-bundle $F_{k-1,k-1}$,
and denote by $X_n=X_n(Q,\gamma)$ the projective surface that restricts to the following surfaces on each
chart:
\begin{align*}
    &\{((w : y : z), x) \in U_{k-1,k-1,0} \, | \,Q(x)w^2 + xy^2 + z^2-\gamma yw=0\},\\
&\{((w : y : z), x) \in U_{k-1,k-1,\infty} \,| \, Q^{r}(x)w^2 + y^2 + xz^2-\gamma x^{k}yw=0\}.
\end{align*}
Here by $Q^{r}(x)$ we mean the reciprocal polynomial of $Q$, i.e. $Q^r=x^{n-1}Q(x^{-1})$.

If $n = 2k + 1$, we take the $\PP^2$-bundle $F_{k-1,k}$,
and denote by $X_n=X_n(Q,\gamma)$ the projective surface that restrict to the following surfaces on each
chart:
\begin{align*}
    &\{((w : y : z), x) \in U_{k-1,k,0}\, | \, Q(x)w^2 + xy^2 + z^2-\gamma yw=0\},\\
&\{((w : y : z), x) \in U_{k-1,k,\infty} \, | \, Q^{r}(x)w^2 + xy^2 + z^2-\gamma x^{k+1}yw=0\}.
\end{align*}

In both cases, we embed the surface $\Dd_n$ in the first affine chart of $X_n$, via the embedding
$$(x, y, z)\mapsto ((1 : y : z), x),$$
so that $X_n$ is the closure of $\Dd_n$ in $F_{a,b}$. 
Geometrically the situation is similar to the case considered in \cite{blancNonrationalityFibrationsAssociated2015} and we can make similar remarks. All the singular points of $X_n$ are in the image, under the above embedding, of a singular point in $\Dd_n$; in particular, for generic $Q$ and $\gamma$, the surface $\Dd_n$ is smooth and so is $X_n$.

The $\PP^2$-bundles $F_{k-1,k-1}\rightarrow \PP^1$ and $F_{k-1,k}\rightarrow \PP^1$ restrict to a morphism $\rho : X_n \rightarrow \PP^1$. The fibres are conics in $\PP^2$, which are smooth for generic values of $x$. The generic fibre is thus isomorphic to $\PP^1$. The fibre $F_\infty$ over $(1 : 0)$ is always degenerate. It is a union of two transversal lines in the second chart, given by the equations $x=0, \, w=\pm iy$ (if $n$ is even) or  $x=0, \, w=\pm iz$ (if $n$ is odd). Denote the two lines in $F_\infty$ by $F_+$ and $F_-$, respectively. The other degenerate fibres are the ones lying over the solutions of the equation
$$\det \begin{pmatrix}
Q(x) & -\gamma/2 & 0\\
-\gamma/2 & x & 0 \\
0 & 0 & 1\\

\end{pmatrix}=0.	 $$
Counting with multiplicity, we have $n+1$ points in $\PP^1$ with degenerate fibre.

For every $Q$ and $\gamma$, the complement $X_n(Q,\gamma)\setminus\Dd_n(Q,\gamma)$ consists of the curve $C_n$, given by the equation $w = 0 $ in each chart,
and the curve $F_\infty=F_+\cup F_- $, with equation $x = 0$ in the second chart, corresponding to the fibre over $(1:0)$. We do not specify the deformation parameters when referring to the curves $C_n, F_+, F_-$, since they are defined by the same equation in $F_{k-1,k-1}$ or $F_{k-1,k}$. 

The geometric description of the boundary is the same for every deformation parameter. We have the following (compare with \cite[Lemma 4.3]{blancNonrationalityFibrationsAssociated2015}).

\begin{lemma}
    For every monic polynomial $Q$ of degree $n-1$ and for all $\gamma\in\CC$, the complement of $\Dd_n(Q,\gamma)$ in $X_n(Q,\gamma)$ is the union of the three curves $C_n, F_+, F_-$, all isomorphic to $\PP^1$. Any two of them intersect transversally, in exactly one point, which is $C_n\cap F_+\cap F_-$. Moreover, $C_n^2=3-n$, and $F_+^2=F_-^2=-1$. 
\end{lemma}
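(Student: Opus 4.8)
The plan is to verify every assertion by explicit computation in the two charts $U_{a,b,0}$ and $U_{a,b,\infty}$, in complete analogy with \cite[Lemma~4.3]{blancNonrationalityFibrationsAssociated2015}. This is legitimate for arbitrary $(Q,\gamma)$ because $C_n$, $F_+$, $F_-$ are cut out by the same equations in $F_{a,b}$ for all parameters, and the divisor class of $X_n$ in $F_{a,b}$ depends only on $\deg Q=n-1$; so none of the intersection numbers below depend on $(Q,\gamma)$, and one could even just quote \cite[Lemma~4.3]{blancNonrationalityFibrationsAssociated2015} for the parameter $Q=x^{n-1}$, $\gamma=0$.

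First I would confirm the rationality of the three curves. The curve $C_n=\{w=0\}$ meets each conic fibre of $\rho$ in two points, so $\rho|_{C_n}$ has degree $2$; parametrising $C_n\cap U_{a,b,0}$ and $C_n\cap U_{a,b,\infty}$ by the solutions of $xy^2+z^2=0$ (after setting $y=1$) in each chart shows that $C_n$ is the union of two affine lines glued by an inversion of the parameter, hence $C_n\cong\PP^1$, and identifies the glueing point with the point $P_0$ of $F_\infty$ having $(w:y:z)=(0:0:1)$ if $n$ is even and $(0:1:0)$ if $n$ is odd. The curves $F_+,F_-$ are, as already noted, lines in the $\PP^2$-fibre over $(1:0)$, hence $\cong\PP^1$. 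The same local computation shows that, in affine coordinates, the derivative of the defining equation of $X_n$ in the base variable equals $1$ along $C_n$, so this variable can be eliminated near every point of $C_n$; combined with the fact that $X_n\setminus\Dd_n=C_n\cup F_\infty$ lies in the smooth locus of $X_n$ (as already observed), this gives smoothness of $X_n$ along these curves.

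Next I would locate the intersection points and find $F_\pm^2$. Imposing $w=0$ together with the equations $w=\pm iy$ (or $w=\pm iz$) defining $F_\pm$ forces the point to be $P_0$ in every case, so $C_n\cap F_+=C_n\cap F_-=F_+\cap F_-=\{P_0\}$. In local coordinates on $X_n$ at $P_0$ — where $X_n$ is smooth and the base variable is eliminated — the three curves become three distinct lines through the origin of a smooth surface, so each pair meets transversally with intersection multiplicity $1$. For the self-intersections of $F_\pm$ I would use the conic bundle $\rho\colon X_n\to\PP^1$: the fibre $F_\infty$ is linearly equivalent to a general fibre, which is disjoint from $F_+$ since $F_+$ is contracted by $\rho$, so $0=F_+\cdot F_\infty=F_+\cdot(F_++F_-)=F_+^2+1$, giving $F_+^2=-1$ and likewise $F_-^2=-1$.

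Finally, to compute $C_n^2$ I would determine $\Oo_{X_n}(C_n)|_{C_n}$ through its transition function. Since $C_n$ is cut out on $X_n$ by a single fibre coordinate in each chart, writing its local equation as $s$ on one chart and $s'$ on the other and using the glueing $(w:y:z)\mapsto(w:x^{-a}y:x^{-b}z)$ together with the relation $x=-t^2$ valid on $C_n$ (with $t$ the affine parameter of the previous step) shows that $s/s'$ is a nonzero constant times $t^{3-n}$; hence $\Oo_{X_n}(C_n)|_{C_n}\cong\Oo_{\PP^1}(3-n)$ and $C_n^2=3-n$. The only genuine obstacle is bookkeeping: carrying through the two parities $n=2k$ and $n=2k+1$ and keeping the glueing maps and sign conventions straight, most delicately in this last transition-function computation, where a sign slip would produce the wrong value of $C_n^2$. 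No idea beyond those of \cite{blancNonrationalityFibrationsAssociated2015} is required.
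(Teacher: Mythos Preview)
Your proposal is correct. The treatment of the boundary description, the transversal triple intersection, and the computation $F_\pm^2=-1$ via $0=F_+\cdot F_\infty=F_+^2+F_+\cdot F_-$ is essentially the paper's argument (the paper first shows $F_+^2+F_-^2=-2$ and then $F_+^2=F_-^2$, but this is the same idea).

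The genuine difference is in the computation of $C_n^2$. You propose to read off the normal bundle $\Oo_{X_n}(C_n)|_{C_n}$ from the transition function of the local defining equation of $C_n$ along the two affine pieces of $C_n\cong\PP^1$; carried out, this gives a cocycle proportional to $t^{2k-3}$ for $n=2k$ and to $t^{2k-2}$ for $n=2k+1$ (with the glueing $t'\sim 1/t$), hence $\Oo_{\PP^1}(3-n)$ in both cases. The paper instead introduces the auxiliary curve $P_n=\{y=0\}$, computes $C_n\cdot P_n$ directly, and uses the rational function $wx^{k-1}/y$ to produce the linear equivalence $C_n\sim P_n-(k-1)F_\infty$, whence $C_n^2=C_n\cdot P_n-2(k-1)=3-n$. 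Your approach is self-contained and conceptually direct (it literally identifies the normal bundle), at the cost of exactly the parity-dependent bookkeeping you flag; the paper's approach trades that bookkeeping for one clever choice of rational function and stays entirely within intersection numbers, which meshes more smoothly with the subsequent blow-up arguments.
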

\begin{proof}
    The only thing that doesn't follow from the discussion above are the self intersection numbers. Let $P_n$ be the curve given by the equation $y=0$ in both charts. If $n=2k$, $P_n$ and $C_n$ intersect only in the point $((0:0:1),0)$ of the second chart. This happens along the distinct directions $w=0$ and $y=0$, so $C_n\cdot P_n=1$. If $n=2k+1$, the two curves are disjoint, so $C_n\cdot P_n=0$. In both cases, $C_n\cdot P_n=2k+1-n$.

    Consider the rational map $g\in\CC(X_n)^*$ given by $w/y$ on the second chart, and by $wx^{k-1}/y$ on the first chart. The associated principal divisor is $C_n $ $ +(k-1)F_0-P_n$, where $F_0$ is given by the equation $x=0$ in the first chart, i.e. it is the fibre over $(0:1)$. Computing the principal divisor associated to the rational function $x$, it is clear that $F_0$ is linearly equivalent to $F_\infty$. The intersection of $C_n$ and $F_\infty$ has multiplicity 2, since $$C_n\cdot F_\infty=C_n\cdot(F_++ F_-)=2. $$
    Thus
    $$C_n^2=C_n\cdot(P_n-(k-1)F_{\infty})=(2k+1-n)-(2k-2)=3-n. $$
    Clearly, $F_0$ and $F_\infty$ are disjoint, so $F_0\cdot F_\infty=0$. Since they are linearly equivalent, this means 
    $$0=F_0\cdot F_\infty=F_\infty^2=(F_++F_-)^2=F_+^2+F_-^2+2F_+\cdot F_-=F_+^2+F_-^2+2. $$
    The linear equivalence of $F_0$ and $F_\infty$ implies $F_+=F_0-F_-$, so $F_+^2=F_-^2+F_0^2.$ Since $F_0^2=F_0\cdot F_\infty=0$, this implies that $F_+^2=F_-^2=-1$.
\end{proof}

Consider now $\phi\in\Iso(\Dd_n)$, an isomorphism $\Dd_n(Q,\gamma)\rightarrow\Dd_n(Q',\gamma')$. This extends to a birational map $\phi:X_n(Q,\gamma)\dashrightarrow X_n(Q',\gamma')$, that is biregular between $\Dd_n(Q,\gamma)\subset X_n(Q, \gamma)$ and $\Dd_n(Q',\gamma')\subset X_n(Q', \gamma')$. We recall the following results about birational maps and blow-ups.

\begin{lemma}\label{lemma birational map blow-ups diagram}
    Let $X, X'$ be projective complex surfaces and $\phi:X\dashrightarrow X'$ a birational map. Assume that $X$ and $X'$ are smooth outside of the open sets $S$ and $S'$, and that $\phi:S\rightarrow S'$ is biregular. Then there exists a surface $Z$ and a commutative diagram
    \begin{equation}
        \xymatrix{
&Z\ar[ld]_\eta \ar[rd]^\pi \\
X\ar@{.>}[rr]_\phi & &X'        
        }
    \end{equation}
    where the morphisms $\eta,\pi$ are composite of blow-ups.
\end{lemma}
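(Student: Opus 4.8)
The plan is to build $Z$ by first eliminating the indeterminacy of $\phi$, and then recognising the resulting morphism onto $X'$ as a composite of blow-downs. Since $\phi$ restricts to a biregular map $S\to S'$, its indeterminacy locus is contained in $X\setminus S$; along that locus $X$ is smooth, hence normal, so the indeterminacy locus is a finite set of smooth points of $X$. By the classical elimination of indeterminacies for a rational map out of a surface (Zariski), there is a morphism $\eta\colon Z\to X$ which is a composite of blow-ups of points, every centre being blown up lying over $X\setminus S$ (hence a smooth point), such that $\pi:=\phi\circ\eta\colon Z\to X'$ is a morphism. As all centres lie over $X\setminus S$, the map $\eta$ is an isomorphism over $S$; in particular $Z$ is smooth outside $\eta^{-1}(S)\cong S$, and smooth along every exceptional curve of $\eta$. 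One records along the way that this confinement is preserved at each step of the resolution: after blowing up a smooth point the surface stays smooth near the new exceptional curve, and the remaining indeterminacy points still lie over that smooth locus, so the procedure never leaves smooth territory.

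Next I would check that $\pi$ is an isomorphism over $S'$. On $\eta^{-1}(S)$ it agrees with the biregular composite $\eta^{-1}(S)\cong S$ followed by $\phi\colon S\to S'$; conversely, if $p\in Z$ has $\pi(p)\in S'$, then since $\phi^{-1}\circ\pi=\eta$ as rational maps (as $\pi=\phi\circ\eta$) and $\phi^{-1}$ is a morphism near $\pi(p)\in S'$, the equality $\eta(p)=\phi^{-1}(\pi(p))\in S$ holds at $p$, so $p\in\eta^{-1}(S)$. Hence $\pi^{-1}(S')=\eta^{-1}(S)$ and $\pi$ is biregular there, which forces the exceptional locus of the birational morphism $\pi\colon Z\to X'$ to lie over $X'\setminus S'$; there both $X'$ and $Z$ are smooth. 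Applying, in a neighbourhood of that locus, the classical structure theory of birational morphisms of smooth surfaces (a non-isomorphism factors through the blow-up of a smooth point, and one iterates, terminating by a Betti-number count), one concludes that $\pi$ is a composite of blow-ups of smooth points of $X'\setminus S'$. Taking $Z$ together with $\eta$ and $\pi$ then gives the asserted commutative diagram.

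The main obstacle is precisely that $X$ and $X'$ are not assumed globally smooth, so neither Zariski's elimination of indeterminacy nor the factorisation theorem for birational morphisms of surfaces applies verbatim; the content of the proof is to keep every object appearing in the construction — the indeterminacy points, the blown-up centres, the exceptional curves, and the curves contracted by $\pi$ — inside the smooth loci $X\setminus S$ and $X'\setminus S'$, so that each classical step may be invoked locally. This is exactly where the hypotheses that $X,X'$ are smooth outside $S,S'$ and that $\phi$ is biregular on $S$ enter, and the verification of this confinement (together with the observation that at each inductive step of the factorisation the preimage of $S'$ still meets no contracted curve) is what makes the statement more than a citation.
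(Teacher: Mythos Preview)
Your argument is correct, but it takes a different route from the paper's. The paper argues by reduction to the smooth case: it passes to resolutions $\hat X\to X$ and $\hat X'\to X'$, applies Beauville's Theorem~II.11 to the induced birational map $\hat\phi\colon\hat X\dashrightarrow\hat X'$ to obtain a roof $\hat X\leftarrow Z'\rightarrow\hat X'$, and then observes that, because $\phi$ is biregular on $S$, the exceptional curves of the two resolutions may be contracted in $Z'$ to produce the desired $Z$. You instead work directly on the singular surfaces, carrying out elimination of indeterminacy and the factorisation of $\pi$ step by step while tracking that every base point, every blown-up centre, and every contracted curve stays inside the smooth loci $X\setminus S$ and $X'\setminus S'$. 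The paper's approach is shorter to state but leaves implicit why the contracted $Z$ still sits in a diagram of blow-ups; your approach makes that explicit at the cost of having to justify that the classical theorems (Zariski's elimination of indeterminacy, Castelnuovo contraction, factorisation of birational morphisms) apply when the ambient surfaces are only locally smooth near the relevant loci. Both lines are valid, and your third paragraph correctly identifies exactly where the work lies in your version.
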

\begin{proof}
    If $X,X'$ are smooth, this is Theorem II.11 of \cite{beauvilleComplexAlgebraicSurfaces1996}. If $X$ or $X'$ are not smooth, $\phi$ induces a birational map $\hat{\phi}$ between resolutions $\hat{X}$ and $\hat{X'}$. We are now in the smooth case, so we get the following commutative diagram
    \begin{equation*}
         \xymatrix{
&Z'\ar[ld]_{\eta'} \ar[rd]^{\pi'} \\
\hat{X}\ar@{.>}[rr]_{\hat{\phi}} \ar[d]_f & &\hat{X'}\ar[d]^g \\
X\ar@{.>}[rr]_\phi & &X'        
        }
    \end{equation*}
    where both $f$ and $g$ are composition of blow-ups. Now, the singularities of $X$ and $X'$ are inside $S$ and $S'$ respectively, and $\phi$ is biregular between $S$ and $S'$. We can thus contract all the curves blown up when resolving the singularities. Call $Z$ the image of $Z'$ under these contractions. We end up with the following diagram
    \begin{equation}
        \xymatrix{
&Z\ar[ld]_\eta \ar[rd]^\pi \\
X\ar@{.>}[rr]_\phi & &X'        
        }
    \end{equation}
    with $\eta=f\circ\eta'\circ f^{-1}$ and $\pi=g\circ\pi'\circ g^{-1}$.
\end{proof}

\begin{lemma}[{\cite[Proposition II.2-3]{beauvilleComplexAlgebraicSurfaces1996}}] \label{lemma technical properties blow-ups}
   Let $S$ be a smooth surface, $\pi:\hat{S}\rightarrow S$ the blow-up of a point $p\in S$ and $E\subset \hat{S}$ the exceptional divisor. Then
   \begin{enumerate}[(i)]
       \item Let $C\subset S$ be an irreducible curve, then $\pi^*C=\overline{C}+mE$, where $m$ is the multiplicity of $p$ in $C$, and the curve $\overline{C}$ is the strict transform of $C$;
       \item Let $D,D'$ be divisors on $S$. Then $(\pi^*D)\cdot(\pi^*D')=D\cdot D'$, $E\cdot\pi^*D=0$ and $E^2=-1$.
   \end{enumerate}
\end{lemma}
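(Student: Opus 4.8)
The plan is to reduce both statements to an explicit local computation on the model blow-up $B\to\CC^2$ of the affine plane at the origin; this is legitimate because the blow-up of a point $p$ on a smooth surface is, in an analytic neighbourhood of $p$, isomorphic to that model, and pull-back of divisors commutes with passing to such a neighbourhood. Recall that $B=\{((x,y),[u:v])\in\CC^2\times\PP^1 : xv=yu\}$ is covered by the chart $\{u\neq0\}$, with coordinates $(x,t)$ and $y=xt$, in which $E=\{x=0\}$, and the symmetric chart $\{v\neq0\}$, with coordinates $(s,y)$ and $x=sy$, in which $E=\{y=0\}$.

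First I would prove (i). Let $f$ be a local equation of the irreducible curve $C$ near $p$ and expand $f=f_m+f_{m+1}+\cdots$ into its homogeneous parts, so $f_m\neq0$ and $m$ is by definition the multiplicity of $p$ on $C$. In the chart $\{u\neq0\}$ the pulled-back function is $f\circ\pi=x^m\bigl(f_m(1,t)+xf_{m+1}(1,t)+\cdots\bigr)$; the second factor is not divisible by $x$ precisely because $f_m\neq0$ (so it does not vanish identically along $E$), and it is a local equation of the strict transform $\overline{C}$, i.e. of the closure of $\pi^{-1}(C\setminus\{p\})$, in this chart. Carrying out the same computation in the chart $\{v\neq0\}$ and patching, the divisor of $f\circ\pi$ equals $mE+\overline{C}$, which is the asserted identity $\pi^*C=\overline{C}+mE$.

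For (ii) the fastest route is the projection formula applied to $\pi$, together with $\pi_*E=0$ (since $E$ maps to a point): this gives $E\cdot\pi^*D=(\pi_*E)\cdot D=0$ and $(\pi^*D)\cdot(\pi^*D')=D\cdot(\pi_*\pi^*D')=D\cdot D'$, using that $\pi_*\pi^*D'=D'$ in the divisor class group because $\pi$ is birational. Equivalently, one moves $D,D'$ into general position so that their supports avoid $p$ and meet transversally, whence $\pi^*D,\pi^*D'$ are isomorphic copies disjoint from $E$ and the two pairings agree termwise. Then $E^2=-1$ drops out: take a smooth curve $C\subset S$ through $p$, so $m=1$ and $\pi^*C=\overline{C}+E$ by (i); from $E\cdot\pi^*C=0$ we get $0=E\cdot\overline{C}+E^2$, and $E\cdot\overline{C}=1$ since $\overline{C}$ meets $E$ transversally at the single point of $E$ corresponding to the tangent direction of $C$ at $p$ (alternatively, apply the genus formula to $E\cong\PP^1$ with $K_{\hat S}=\pi^*K_S+E$). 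The only point requiring genuine care is the one flagged inside the proof of (i): verifying that $f_m(1,t)+xf_{m+1}(1,t)+\cdots$ cuts out exactly $\overline{C}$ and nothing bigger, and that the two chartwise computations glue to the global strict transform — both of which come down to the single fact $f_m\neq0$. Everything else is routine divisor bookkeeping.
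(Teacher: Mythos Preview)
Your proof is correct and follows the standard approach; it is essentially the argument given in Beauville's \emph{Complex Algebraic Surfaces}, Propositions II.2--3, which is precisely the reference the paper cites. Note that the paper does not supply its own proof of this lemma: it is stated with a citation and used as a black box, so there is no in-paper argument to compare against. Your local computation for (i) and the projection-formula argument for (ii), with $E^2=-1$ deduced from a smooth curve through $p$, match Beauville's treatment almost line for line.
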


\begin{cor}\label{cor technical properties blow-ups}
    In the setting of Lemma \ref{lemma technical properties blow-ups}, let $C\subset S$ be an irreducible curve. Then
    \begin{enumerate}[(i)]
        \item the intersection number $E\cdot\overline{C}$ is the multiplicity of $p$ in $C$.
        \item  $C^2\geq \overline{C}^2$, and $C^2=\overline{C}^2$ if and only if $p\notin C$;
    \end{enumerate}
\end{cor}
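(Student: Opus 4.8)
The plan is to read off both parts directly from Lemma \ref{lemma technical properties blow-ups}, with essentially no extra input. Write $m$ for the multiplicity of $p$ in $C$, so that by Lemma \ref{lemma technical properties blow-ups}(i) we have $\pi^*C = \overline{C} + mE$ as divisors on $\hat{S}$.

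For part (i), I would intersect this equality with $E$ and use Lemma \ref{lemma technical properties blow-ups}(ii), which gives $E\cdot\pi^*C = 0$ and $E^2 = -1$. Hence $0 = E\cdot\overline{C} + mE^2 = E\cdot\overline{C} - m$, so $E\cdot\overline{C} = m$, which is exactly the multiplicity of $p$ in $C$.

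For part (ii), I would square the equality $\pi^*C = \overline{C} + mE$. By Lemma \ref{lemma technical properties blow-ups}(ii), applied with $D = D' = C$, the left-hand side has self-intersection $C^2$; expanding the right-hand side and using part (i) together with $E^2 = -1$ yields
\begin{equation*}
C^2 = \overline{C}^2 + 2m\,(\overline{C}\cdot E) + m^2 E^2 = \overline{C}^2 + 2m^2 - m^2 = \overline{C}^2 + m^2 .
\end{equation*}
Therefore $C^2 - \overline{C}^2 = m^2 \geq 0$, with equality if and only if $m = 0$, i.e. if and only if $p \notin C$.

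There is no genuine obstacle here: the corollary is a formal consequence of the pull-back formula and the intersection identities recorded in Lemma \ref{lemma technical properties blow-ups}. The only points requiring a little care are the sign and coefficient conventions in $\pi^*C = \overline{C} + mE$ and the fact that pull-back preserves self-intersection numbers, both of which are supplied by that lemma.
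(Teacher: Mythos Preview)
Your proof is correct and follows essentially the same approach as the paper: intersect $\pi^*C=\overline{C}+mE$ with $E$ for part (i), then square it for part (ii), using the identities of Lemma \ref{lemma technical properties blow-ups} throughout. If anything, your write-up is slightly more explicit about the equality case in (ii).
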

\begin{proof}
    From point (i) of Lemma \ref{lemma technical properties blow-ups}, $\pi^*C=\overline{C}+mE$. Multiply now both sides by $E$. Using point (ii) of Lemma \ref{lemma birational map blow-ups diagram}, this becomes $0=\overline{C}\cdot E-m$. 

    We compute the self intersection of $C$. Using again point (ii), we get 
    $$C^2=(\pi^*C)^2=(\overline{C}+mE)^2=\overline{C}^2+2m\overline{C}\cdot E-m^2E^2=\overline{C}^2+m^2\geq \overline{C}^2. $$
\end{proof}

We can apply Lemma \ref{lemma birational map blow-ups diagram} to our birational map $\phi$ and get the diagram
    \begin{equation}\label{diagram blow-ups}
        \xymatrix{
&Z\ar[ld]_\eta \ar[rd]^\pi \\
X_n(Q,\gamma)\ar@{.>}[rr]_\phi & &X_n(Q',\gamma')        
        }
    \end{equation}
    where the morphisms $\eta,\pi$ are compositions of blow-ups of points of $X_n(Q,\gamma)\setminus \Dd_n(Q,\gamma)$ and $X_n(Q',\gamma')\setminus\Dd_n(Q',\gamma')$ respectively. Denote by $\overline{F_+},\overline{F_-}$ and $\overline{C_n}$ the strict transforms via $\eta^{-1}$ of $F_+,F_-$ and $C_n$. Note that since we are blowing-up away from the singularities, we can assume to be in the setting of Lemma \ref{lemma technical properties blow-ups}. Since $\phi$ is biregular between $\Dd_n(Q,\gamma)$ and $\Dd_n(Q',\gamma')$, it sends the curves $F_+,F_-$ and $C_n$ into themselves. Suppose now that $\phi$ is not an isomorphism $X_n(Q,\gamma)\xrightarrow{\sim}X_n(Q',\gamma')$. Then $\pi$ must contract one of the curves $\overline{F_+},\overline{F_-}$ and $\overline{C_n}$ to a point. 

\begin{oss}\label{oss order blow-ups}
    Without loss of generality, we can assume that $\pi$ decomposes into a sequence of blow-ups of points $\pi_s\circ\cdots\circ\pi_1$, where $\pi_1$ contracts one of the curves $\overline{F_+},\overline{F_-}$ or $\overline{C_n}$. To see why, let us decompose $\eta=\eta_k\circ\dotso\circ\eta_1$ into blow-ups of points. Each $\eta_i$ adds an exceptional divisor, which we denote $E_i$. Denote by $\overline{E_i}$ their strict transforms. If $\pi_1$ contracts $E_k$, then we can remove $\pi_1$ and $\eta_k$ and get another diagram of the form \eqref{diagram blow-ups}. Suppose now that $\pi_1$ contracts $\overline{E_i}$ for some $i<k$. This means that $\overline{E_i}^2=-1$, so the self intersection number of $E_i$ does not change under the strict transform. By point (ii) of Corollary \ref{cor technical properties blow-ups}, the maps $\eta_j$ with $j>i$ blow-up points that are not in $E_i$. So $\eta_i$ commutes with all $\eta_j$ with $j>i$ and we reduce ourselves to the case $E_k$.
\end{oss}

We have the following proposition (compare with \cite[Proposition 4.4]{blancNonrationalityFibrationsAssociated2015}). 

\begin{prop}\label{Prop Blackbox}
    Let $n\geq4$, $Q,Q'$ monic polynomials of degree $n-1$ and $\gamma,\gamma'\in\CC$. Every isomorphism $\phi:\Dd_n(Q,\gamma)\rightarrow\Dd_n(Q',\gamma')$ extends to an isomorphism $X_n(Q,\gamma)\rightarrow X_n(Q',\gamma')$.
\end{prop}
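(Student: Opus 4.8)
The plan is to argue by contradiction, assuming $\phi$ does not extend to an isomorphism, and to derive a contradiction by tracking self-intersection numbers through the blow-up diagram \eqref{diagram blow-ups}. By Lemma~\ref{lemma birational map blow-ups diagram} we have the resolution $Z$ with $\eta = \eta_k\circ\cdots\circ\eta_1$ and $\pi = \pi_s\circ\cdots\circ\pi_1$ composites of blow-ups of points lying in the boundary curves $C_n, F_+, F_-$. If $\phi$ is not an isomorphism, then $\pi$ must contract at least one of the strict transforms $\overline{C_n}, \overline{F_+}, \overline{F_-}$. Using Remark~\ref{oss order blow-ups}, I may assume the first blow-down $\pi_1$ contracts one of these three curves; by Corollary~\ref{cor technical properties blow-ups}(ii) this forces that strict transform to be a $(-1)$-curve on $Z$.

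First I would compute how the self-intersections of the three boundary curves evolve under $\eta$. Since $\eta$ only blows up points of the boundary, and the boundary is the union $C_n \cup F_+ \cup F_-$ meeting at the single triple point $p_0 = C_n\cap F_+\cap F_-$, each blow-up either sits on one of the three curves or at an intersection point. By Corollary~\ref{cor technical properties blow-ups}(ii), $\overline{C_n}^2 \le C_n^2 = 3-n$, $\overline{F_\pm}^2 \le F_\pm^2 = -1$; more precisely each strict transform's self-intersection drops by the sum of squared multiplicities of the blown-up points lying on it. For $\pi_1$ to contract $\overline{C_n}$ we would need $\overline{C_n}^2 = -1$, i.e. the blow-ups performed by $\eta$ along $C_n$ must increase its self-intersection from $3-n$ to $-1$; but blow-ups can only \emph{decrease} self-intersection, and $3-n \le -1$ for $n\ge 4$ with equality only when $n=4$. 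So for $n>4$, $\overline{C_n}^2 \le 3-n < -1$ and $\overline{C_n}$ cannot be the contracted curve; for $n=4$ it could only be contracted if $\eta$ blew up no point of $C_4$, but then the same analysis applies at $F_\pm$. This leaves the curves $\overline{F_+}$ or $\overline{F_-}$: each has $F_\pm^2 = -1$ already, so $\overline{F_\pm}$ is a $(-1)$-curve only if $\eta$ blows up no point lying on $F_\pm$ — in particular not the triple point $p_0$.

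Now comes the crux. The three boundary curves of $X_n(Q,\gamma)$ all pass through $p_0$, and likewise for $X_n(Q',\gamma')$. Since $\phi$ restricts to a biregular map on the interiors, the combinatorics of the boundary configuration must be preserved: after running $\eta$ and $\pi$ the configuration $\overline{C_n} \cup \overline{F_+} \cup \overline{F_-}$ on $Z$ maps down to the boundary of $X_n(Q',\gamma')$, which is again three $\mathbb{P}^1$'s meeting at one triple point with self-intersections $3-n, -1, -1$. The strategy is to show that the total change in the sum of the three self-intersections is an invariant: each blow-up of a point on exactly $j$ of the boundary curves decreases the relevant self-intersections by a total contribution governed by $j$, while adding one new $(-1)$-curve (the exceptional divisor $E_i$) to the picture if we also count it. The key identity to exploit is that $\eta$ and $\pi$ must perform the \emph{same number} of blow-ups ($k = s$, since $\phi$ is birational and $Z$ is the common resolution realized minimally), and that blowing up the triple point versus a smooth point of a single boundary curve affects the self-intersection bookkeeping differently. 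Comparing these, I expect to conclude that $\pi$ can contract a boundary curve only if $\eta$ blew up the corresponding point, but then the contracted exceptional configuration on the $X_n(Q',\gamma')$ side would leave a boundary curve with the wrong self-intersection — contradicting the fixed values $3-n, -1, -1$. Hence no such contraction occurs, $\pi$ (and symmetrically $\eta$) is an isomorphism, and $\phi$ extends.

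The main obstacle will be the bookkeeping at the triple point $p_0$: a single blow-up there simultaneously affects all three boundary self-intersections and separates the three branches, so I need to be careful whether subsequent blow-ups land on the resulting configuration (the exceptional $\mathbb{P}^1$ now meets all three strict transforms) and how that interacts with the constraint that the target configuration must again be a single triple point. I anticipate needing a small case analysis according to whether $\eta$ blows up $p_0$ at all, and whether $n=4$ (where $C_n^2 = -1$ makes $C_4$ a priori contractible) is genuinely different — but in the $n=4$ case the extra symmetry $T$ from Theorem~\ref{teo iso quantizations type D} suggests the configuration is still rigid enough. The cleanest route is probably to show directly that $\overline{C_n}^2 + \overline{F_+}^2 + \overline{F_-}^2 \le (3-n) + (-1) + (-1)$ with equality iff $\eta$ is an isomorphism, and symmetrically for $\pi$, then use $C_n^2 + F_+^2 + F_-^2$ on the target to pin everything down.
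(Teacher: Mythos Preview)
Your setup is right and matches the paper: the contradiction argument via the diagram \eqref{diagram blow-ups}, the observation that the first contracted curve must be a $(-1)$-curve, the conclusion that for $n>4$ this rules out $\overline{C_n}$ (and for $n=4$ the argument will be symmetric anyway), and the crucial deduction that if $\overline{F_+}$ is the contracted curve then $\eta$ blew up no point of $F_+$, in particular not the triple point $p_0$. Up to here you and the paper agree.

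The gap is in your ``crux'' paragraph. The self-intersection-sum invariant you propose does not close the argument. The inequality $\overline{C_n}^2+\overline{F_+}^2+\overline{F_-}^2\le 1-n$ coming from $\eta$ is correct, but ``symmetrically for $\pi$'' gives an inequality about the strict transforms on $Z$ of the \emph{target} boundary curves, which are in general \emph{different} curves from $\overline{C_n},\overline{F_\pm}$ (indeed, if $\pi$ contracts $\overline{F_+}$ then $\overline{F_+}$ is not the strict transform of anything on the target, and one of the target boundary components must instead arise from an $\eta$-exceptional divisor). So the two inequalities are about disjoint triples of curves and cannot be combined into a contradiction. Your claim $k=s$ is also not justified and plays no role in the paper's argument.

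What the paper does instead is track an \emph{intersection number}, not a self-intersection sum. Since $\eta$ does not touch $p_0$, the three strict transforms still meet transversally at a single point of $Z$. Contracting $\overline{F_+}$ by $\pi_1$ then gives, by Lemma~\ref{lemma technical properties blow-ups},
\[
\pi_1(\overline{F_-})\cdot\pi_1(\overline{C_n})=(\overline{F_-}+\overline{F_+})\cdot(\overline{C_n}+\overline{F_+})=1+1+1-1=2,
\]
so the two remaining curves are \emph{tangent}. This is the missing idea: a tangency cannot be undone by further blow-downs without creating a singular point on one of the curves (by Corollary~\ref{cor technical properties blow-ups}(i), contracting one of them would leave the other with a point of multiplicity $2$), and the target boundary consists of smooth curves meeting transversally. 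That is the contradiction. Your self-intersection bookkeeping will not detect this; you need the pairwise intersection.
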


\begin{proof}    
    Suppose that $\phi:\Dd_n(Q,\gamma)\rightarrow\Dd_n(Q',\gamma')$ does not extend to an isomorphism $X_n(Q,\gamma)\rightarrow X_n(Q',\gamma')$. We are in the situation described in diagram \eqref{diagram blow-ups}. By Remark \ref{oss order blow-ups}, we can assume that the first curve contracted by $\pi$ is either $\overline{F_+},\overline{F_-}$ or $\overline{C_n}$. Since this curve is a $(-1)$-curve, it is either $\overline{F_+} $ or $\overline{F_-}$ (or $\overline{C_n}$ if $n=4$), by (ii) of Corollary \ref{cor technical properties blow-ups}; say it is $\overline{F_+}$. 
    
    Since taking the strict transform via $\eta^{-1}$ does not change the self intersection number of $F_+$, $\eta$ does not blow-up any point of $F_+$ by (ii) of Corollary \ref{cor technical properties blow-ups}. In particular, it does not blow-up the triple intersection point of the components of the boundary. So $\overline{F_+},\overline{F_-}$ and $\overline{C_n}$ still intersect transversely in one point inside $Z$. Denote $F^{(1)}:=\pi_1(\overline{F_-})$ and $C^{(1)}=\pi_1(\overline{C_n})$. Then 
    $$F^{(1)}\cdot C^{(1)}=(\overline{F_-}+\overline{F_+})\cdot (\overline{C_n}+\overline{F_+})=1+1+1-1=2, $$
    by applying both points of Lemma \ref{lemma technical properties blow-ups} and the fact that $m=\overline{F_-}\cdot\overline{F_+}=1$ (by (i) of Corollary \ref{cor technical properties blow-ups}). Thus $F^{(1)}$ and $C^{(1)}$ are tangent. This leads to a contradiction, because it is not possible to recover the original boundary via other contractions. In fact, if $\pi$ contracts $F^{(1)}$ or $C^{(1)}$, say $F^{(1)}$, then $\pi(C^{(1)})$ is a curve whose strict transform $C^{(1)}$ intersects the exceptional divisor with multiplicity $2$. This means, by (i) of Corollary \ref{cor technical properties blow-ups}, that it contains a point with multiplicity $2$, so it is not smooth. This is a contradiction, because the curves of the boundary are all smooth, and contracting does not resolve singularities. If instead $\pi$ never contracts $F^{(1)}$ or $C^{(1)}$, then it does not modify their intersection point, and it is impossible to recover the original boundary. The same reasoning works if the curve contracted by $\pi_1$ is $\overline{F_-}$ (or $\overline{C_n}$ if $n=4$). 
\end{proof}
Using Proposition \ref{Prop Blackbox}, we can compute the groupoid $\Iso(\Dd_n)$ for all $n\geq4$ (compare with \cite[Corollary 4.5]{blancNonrationalityFibrationsAssociated2015}). First, let us recall some facts that will be used in the proof.

\begin{lemma}\label{Prop iso conic}
    Every isomorphism between two non-degenerate conics in $\PP^2$ can be extended to a projective transformation of $\PP^2$.
\end{lemma}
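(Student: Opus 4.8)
The plan is to reduce the statement to the case of a single fixed conic and then produce the extension explicitly, using that a smooth conic is the image of the second Veronese embedding of $\PP^1$.

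First I would recall that $\mathrm{PGL}_3(\CC)=\Aut(\PP^2)$ acts transitively on the set of non-degenerate conics: such a conic is the zero locus of a non-degenerate quadratic form in three variables, and over $\CC$ any such form becomes the standard one $XZ-Y^2$ after a linear change of coordinates. Fix the standard conic $C_0=\{XZ=Y^2\}$, which is the image of the Veronese map $v\colon\PP^1\to C_0$, $[s:t]\mapsto[s^2:st:t^2]$, an isomorphism. Given non-degenerate conics $C_1,C_2$ and an isomorphism $f\colon C_1\to C_2$, choose $h_1,h_2\in\mathrm{PGL}_3(\CC)$ with $h_i(C_0)=C_i$; then $g:=h_2^{-1}\circ f\circ h_1|_{C_0}$ is an automorphism of $C_0$, and it suffices to extend $g$ to a projective transformation $\tilde g$ of $\PP^2$, since $h_2\circ\tilde g\circ h_1^{-1}$ will then extend $f$.

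Next I would identify $C_0$ with $\PP^1$ via $v$, so that an automorphism $g$ of $C_0$ is an element of $\Aut(\PP^1)=\mathrm{PGL}_2(\CC)$, represented by some $A\in\mathrm{GL}_2(\CC)$. The point is that $v$ is equivariant for the standard action of $\mathrm{GL}_2(\CC)$ on $\PP^1$ and its action on $\PP^2=\PP(\mathrm{Sym}^2(\CC^2))$ through $\mathrm{Sym}^2\colon\mathrm{GL}_2(\CC)\to\mathrm{GL}_3(\CC)$ (the matrix $\mathrm{Sym}^2(A)$ being the one describing the substitution $(s,t)\mapsto A(s,t)$ on the monomials $s^2,st,t^2$). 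Since $\mathrm{Sym}^2(\lambda\,\mathrm{Id})=\lambda^2\,\mathrm{Id}$, scalars go to scalars, so $\mathrm{Sym}^2$ descends to a homomorphism $\mathrm{PGL}_2(\CC)\to\mathrm{PGL}_3(\CC)$, and the image $\tilde g$ of $g$ under this homomorphism is a projective transformation of $\PP^2$ preserving $C_0$ and restricting to $g$ on it. Alternatively, and perhaps more cleanly, one observes that the embedding $C_i\hookrightarrow\PP^2$ is the morphism attached to the complete linear system of the degree-$2$ line bundle $L_i:=\Oo_{\PP^2}(1)|_{C_i}$ on $C_i\cong\PP^1$ (complete because $h^0(\PP^1,\Oo(2))=3$ and a non-degenerate conic spans $\PP^2$), so that $\PP^2$ is canonically $\PP(H^0(C_i,L_i)^\vee)$; the isomorphism $f$ pulls $L_2$ back to a degree-$2$ bundle on $C_1$, necessarily isomorphic to $L_1$, and any such isomorphism of bundles induces a linear isomorphism $H^0(C_2,L_2)\to H^0(C_1,L_1)$ well defined up to scalar, whose associated element of $\mathrm{PGL}_3(\CC)$ restricts to $f$ by naturality of the linear-system construction.

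I do not expect a genuine obstacle: this is a classical fact, and the only points needing a little care are the transitivity of $\mathrm{PGL}_3(\CC)$ on non-degenerate conics (which is where algebraic closedness of $\CC$ enters, via square roots when diagonalising the quadratic form) and the verification that $v$ is $\mathrm{GL}_2$-equivariant and that $\mathrm{Sym}^2$ descends to the projective groups — all routine checks.
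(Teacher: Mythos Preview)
Your proposal is correct and follows essentially the same route as the paper: both reduce to automorphisms of the fixed conic $\{XZ=Y^2\}$ via transitivity of $\mathrm{PGL}_3(\CC)$ on smooth conics, identify $\Aut(C_0)$ with $\mathrm{PGL}_2(\CC)$ through the Veronese parametrisation, and extend via the $\mathrm{Sym}^2$ action. The only difference is presentational — the paper writes out the $3\times 3$ matrix of $\mathrm{Sym}^2(A)$ explicitly and checks its determinant is $(ad-bc)^3$, whereas you invoke the representation abstractly (and also sketch a more conceptual alternative via the complete linear system $|\Oo_{\PP^1}(2)|$, which the paper does not mention).
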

\begin{proof}
    Since two non degenerate conics are always isomorphic via a projective transformation of $\PP^2$, it is sufficient to show that every automorphism of a specific non degenerate conic extends to a projective transformation of $\PP^2$.

    Consider the conic $C: xy-z^2=0$. Since it is non degenerate, it is isomorphic to $\PP^1$. Explicitily, this is given by
    $$ (s:t)\mapsto (s^2:st:t^2).$$
    Thus, every automorphism of $C$ will be induced by an automorphism of $\PP^1$, which has the form $(s:t)\mapsto (as+bt:cs+dt)$, with $a,b,c,d\in\CC$ with $ad-bc\neq 0$. The induced automorphism on the conic is
    $$ (x:y:z)\mapsto (a^2x+2aby+b^2z: acx+(ad+bc)y+bdz: c^2+2cdy+d^2z),$$
    which extends to an element of $PGL(3,\CC)$, because the determinant of the associated matrix is \\$(ad-bc)^3$.
\end{proof}

\begin{prop}\label{Prop Makar-Limanov}
    Consider the algebra $A:=\CC[x,y,z]/(xy^2-z^2)$, graded with $\deg x=0, \deg y=1, \deg z=1$. The group of graded automorphisms of $A$ is:
    $$\{(\alpha^2\beta^{-2} x, \beta y, \alpha z) \, | \, \alpha,\beta \in \CC^*\} . $$ 
\end{prop}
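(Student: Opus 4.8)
The plan is to pin down a graded automorphism $\phi$ of $A$ by its action in degrees $0$ and $1$, and then to pass to the function field. First I would observe that, since $\deg x=0$ while $\deg y=\deg z=1$ and the relation $xy^2-z^2$ is homogeneous of degree $2$, the degree-zero component is $A_0=\CC[x]$ and the degree-one component is the free $\CC[x]$-module $A_1=\CC[x]y\oplus\CC[x]z$. Thus $\phi$ restricts to an automorphism of $A_0$, giving $\phi(x)=\lambda x+\mu$ with $\lambda\in\CC^\times$ and $\mu\in\CC$, and it sends $A_1$ onto $A_1$, so $\phi(y)=a(x)y+b(x)z$ and $\phi(z)=c(x)y+d(x)z$ with $a,b,c,d\in\CC[x]$. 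Since $\phi(\CC[x])=\CC[x]$, we have $\phi(A_1)=\CC[x]\phi(y)+\CC[x]\phi(z)$, so the requirement $\phi(A_1)=A_1$ forces $ad-bc\in\CC^\times$. The last constraint on $\phi$ is that it respect the defining relation, i.e. $\phi(x)\phi(y)^2=\phi(z)^2$ in $A$, which one may reduce using $z^2=xy^2$.

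Next I would work in the fraction field $K:=\mathrm{Frac}(A)$. Putting $t:=z/y$, one has $t^2=x$ and $K=\CC(y,t)$ with $x=t^2$ and $z=ty$. Then $\phi(t)=\phi(z)/\phi(y)$ and $\phi(t)^2=\phi(x)=\lambda t^2+\mu$, so $\lambda t^2+\mu$ is a square in $\CC(t)$, hence (by unique factorisation) a square in $\CC[t]$; a polynomial of degree $2$ is a square only if it has a double root, and since $\lambda\neq0$ this forces $\mu=0$. Now $(\phi(t)/t)^2=\lambda$ in $\CC(t)$ yields $\phi(t)=\varepsilon\sqrt{\lambda}\,t$ for some sign $\varepsilon$. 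Writing $\phi(y)=y\big(a(t^2)+t\,b(t^2)\big)$ and $\phi(z)=y\big(c(t^2)+t\,d(t^2)\big)$, the identity $\phi(z)=\phi(t)\phi(y)=\varepsilon\sqrt{\lambda}\,t\,y\big(a(t^2)+t\,b(t^2)\big)$, split into its parts of even and odd degree in $t$, gives $d(x)=\varepsilon\sqrt{\lambda}\,a(x)$ and $c(x)=\varepsilon\sqrt{\lambda}\,x\,b(x)$.

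Finally I would feed these back into the invertibility condition: $ad-bc=\varepsilon\sqrt{\lambda}\,\big(a(x)^2-x\,b(x)^2\big)$, and since $\deg_x(a^2)$ is even while $\deg_x(xb^2)$ is odd, no cancellation of top-degree terms can occur, so $a^2-xb^2\in\CC^\times$ forces $b=0$; then $c=0$, $a=\beta\in\CC^\times$ and $d=\alpha\in\CC^\times$, whence $\lambda=\alpha^2\beta^{-2}$ and $\phi=(\alpha^2\beta^{-2}x,\beta y,\alpha z)$. For the converse, each such triple sends $xy^2-z^2$ to $\alpha^2(xy^2-z^2)$, so it descends to a graded endomorphism of $A$ with inverse $(\alpha^{-2}\beta^2 x,\beta^{-1}y,\alpha^{-1}z)$, hence is a graded automorphism; this gives precisely the asserted group. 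I expect the only delicate point to be the second paragraph: correctly identifying $\mathrm{Frac}(A)$ with $\CC(y,t)$ where $t^2=x$, and carrying out the even/odd-degree matching that produces the relations among $a,b,c,d$. Everything else is routine bookkeeping.
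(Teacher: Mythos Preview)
Your argument is correct and complete. The identification $\mathrm{Frac}(A)\cong\CC(y,t)$ with $t=z/y$, $t^2=x$ is sound (the hypersurface $xy^2-z^2$ is irreducible, so $A$ is a domain of Krull dimension $2$, and $x=t^2$, $z=ty$ generate everything), and the even/odd splitting in $t$ cleanly yields $c=\varepsilon\sqrt{\lambda}\,xb$, $d=\varepsilon\sqrt{\lambda}\,a$; the parity argument on $a^2-xb^2\in\CC^\times$ is then decisive.

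Your route, however, is genuinely different from the paper's. The paper does not compute directly but invokes Makar-Limanov's description of the \emph{full} automorphism group of $A$ (hyperbolic rotations $H_\nu$, rescalings $R_\mu$, and triangular automorphisms $\Delta_g$, with the semidirect product structure $\Aut(A)=\Delta\rtimes\langle H_\nu,R_\mu\rangle$), and then simply observes that no nontrivial $\Delta_g$ preserves the grading, so the graded automorphisms are exactly the commuting torus $\{H_\nu R_\mu\}$. The trade-off is clear: the paper's proof is a one-line corollary of a substantial external theorem, whereas your argument is self-contained and elementary, exploiting the grading from the outset so that the heavy machinery of Makar-Limanov's classification is never needed. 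For the purposes of this paper either is fine, but your approach has the advantage of not importing more than is actually used.
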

\begin{proof}
    It is shown in \cite[Theorem 1]{makar-limanovGroupAutomorphismsSurface2001} that the group of automorphisms of $A$ is generated by:
    \begin{enumerate}
        \item Hyperbolic rotations: $H_{\nu}=(\nu^{-2} x,\nu y, z)$, for all $\nu\in\CC^*$;
        \item Rescalings: $R_\mu=(\mu^2 x, y, \mu z)$, for all $\mu\in\CC^*$:
        \item Triangular automorphisms: $$\Delta_g=(x+[(z+y^2g(y))^2-z^2]y^{-2}, y, z+y^2 g(y)),$$
        for all $g(y)\in\CC[y]$.
    \end{enumerate}
    The triangular automorphisms form a normal subgroup $\Delta$ isomorphic to the additive group $\CC[x]$ via $g\mapsto\Delta_g$, and the group of automorphisms is the semidirect product of $\Delta$ and of the subgroup generated by the automorphisms of type (1) and (2)  (\cite[Final Remark]{makar-limanovGroupAutomorphismsSurface2001}). The proposition follows by noticing that automorphisms in $\Delta$ don't preserve the grading (except for the identity), and that $H_\mu H_{\nu}=H_{\mu\nu}$, $R_\mu R_\nu=R_{\mu \nu}$ and $R_\mu H_\nu= H_{\nu} R_\mu$.    
\end{proof}

\begin{lemma}\label{prop bundle isomorphism}
    Suppose $\rho_1: X_1\rightarrow \PP^1$ and $\rho_2: X_2\rightarrow \PP^1$ are proper, surjective morphisms with irreducible fibres, and let $\phi$ be an isomorphism $X_1\rightarrow X_2$. If there exists a fibre in $X_1$ that $\phi$ sends to a fibre in $X_2$, then $\phi$ sends all fibres to fibres.
\end{lemma}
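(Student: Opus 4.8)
The plan is to transfer the question to the composite $\psi:=\rho_2\circ\phi\colon X_1\to\PP^1$, which is again proper and surjective, and then to prove that $\psi$ and $\rho_1$ define the same fibration of $X_1$. Since $\phi$ is an isomorphism and the fibres of $\rho_2$ are irreducible, every fibre of $\psi$ is irreducible too (each is $\phi^{-1}$ of a fibre of $\rho_2$), and the hypothesis says precisely that the distinguished fibre $F_0=\rho_1^{-1}(q_0)$ of $\rho_1$ coincides with $\psi^{-1}(p_0)$ for some $p_0\in\PP^1$. If we can show $\psi=h\circ\rho_1$ for an automorphism $h$ of $\PP^1$, then $\phi$ carries each $\rho_1^{-1}(q)$ onto $\rho_2^{-1}(h(q))$, which is a fibre of $\rho_2$, and we are done.

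The first step is numerical, working with intersection theory on the (projective) surface $X_1$. All the fibre divisors $\rho_1^{*}(q)$, $q\in\PP^1$, are linearly equivalent --- any two points of $\PP^1$ are, and pullback preserves linear equivalence --- and similarly all the $\psi^{*}(p)$ are; hence the intersection number $d:=\rho_1^{*}(q)\cdot\psi^{*}(p)$ is independent of $q$ and $p$. Choosing $q=q_0$ and $p\neq p_0$, the divisors $\rho_1^{*}(q_0)$ and $\psi^{*}(p)$ are supported on the disjoint fibres $F_0=\psi^{-1}(p_0)$ and $\psi^{-1}(p)$ of $\psi$, so $d=0$; therefore $\rho_1^{*}(q)\cdot\psi^{*}(p)=0$ for all $q$ and $p$.

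Next I would deduce that $\psi$ is constant on every fibre of $\rho_1$. The class $\psi^{*}(p)$ is nef, being a fibre of a morphism onto a curve, and $\rho_1^{-1}(q)$ is an irreducible curve with $\rho_1^{-1}(q)\cdot\psi^{*}(p)=0$. If $\psi$ were non-constant on $\rho_1^{-1}(q)$ it would map it onto all of $\PP^1$, hence meet a general fibre $\psi^{-1}(p)$ in a nonempty finite set, giving $\rho_1^{-1}(q)\cdot\psi^{*}(p)\ge 1$ --- a contradiction. So $\psi\bigl(\rho_1^{-1}(q)\bigr)$ is a point for every $q$. Since $\rho_1$ is proper, surjective and has connected fibres (they are even irreducible), $\rho_{1*}\mathcal{O}_{X_1}=\mathcal{O}_{\PP^1}$ by Stein factorisation, and the rigidity lemma yields a morphism $h\colon\PP^1\to\PP^1$ with $\psi=h\circ\rho_1$. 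This $h$ is surjective, hence finite; for general $p$ the fibre $\psi^{-1}(p)$ equals $\rho_1^{-1}\bigl(h^{-1}(p)\bigr)$, a disjoint union of $\deg h$ fibres of $\rho_1$, and since $\psi^{-1}(p)$ is irreducible we must have $\deg h=1$. Hence $h\in\mathrm{Aut}(\PP^1)$ and the proof is complete.

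The step I expect to be the main obstacle is the passage ``$\rho_1^{-1}(q)\cdot\psi^{*}(p)=0\Rightarrow\psi$ constant on $\rho_1^{-1}(q)$'', which rests on the nefness of the fibre class together with intersection theory on the surface $X_1$; the remaining steps are essentially formal. I would also take care to invoke the rigidity/factorisation statement in a version that genuinely applies --- it requires the connectedness of the fibres of $\rho_1$ and the separatedness of the target $\PP^1$, both available here --- and I would note that a fibre $\rho_1^{-1}(q)$ may fail to be reduced, so one must keep the reduced curve and the scheme-theoretic divisor $\rho_1^{*}(q)$ apart; this affects neither the vanishing $d=0$ nor the nefness argument, since both only use the numerical class of $\rho_1^{*}(q)$ and the disjointness of the relevant supports.
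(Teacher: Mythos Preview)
Your argument is correct, but it is considerably more elaborate than what the paper does. The paper's proof is four lines and entirely set-theoretic: if $\phi(\rho_1^{-1}(z))$ is not a fibre of $\rho_2$, then $\rho_2$ restricted to it (a proper map from an irreducible variety) has image all of $\PP^1$, so in particular $\phi(\rho_1^{-1}(z))$ meets the special fibre $\rho_2^{-1}(x_1)=\phi(\rho_1^{-1}(x_0))$; but $\rho_1^{-1}(z)$ and $\rho_1^{-1}(x_0)$ are disjoint and $\phi$ is an isomorphism, contradiction. No intersection theory, no Stein factorisation, no rigidity lemma.

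Notice that this core observation is already hiding inside your Step~2: you argue that if $\psi$ were non-constant on $\rho_1^{-1}(q)$ it would surject onto $\PP^1$ and hence hit a general fibre of $\psi$. Replace ``a general fibre'' by the specific fibre $\psi^{-1}(p_0)=\rho_1^{-1}(q_0)$, and you get the paper's contradiction immediately, without ever invoking the vanishing of $d$ from Step~1 or nefness. Your Step~1, the rigidity lemma, and the degree computation for $h$ are all unnecessary for the statement as written, which only asks that fibres go to fibres set-theoretically. What your longer route buys is a slightly stronger conclusion --- the existence of the automorphism $h\in\Aut(\PP^1)$ with $\rho_2\circ\phi=h\circ\rho_1$ --- and it also requires you to assume $X_1$ is a projective surface so that the intersection pairing makes sense, an assumption absent from the lemma and unnecessary for the paper's argument.
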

\begin{proof}
    Suppose there exists $x_0\in\PP^1$ such that $\phi(\rho_1^{-1}(x_0))=\rho_2^{-1}(x_1)$ for some $x_1\in\PP^1$. Take any other fibre $\rho_1^{-1}(z)$, with $z\in\PP^1$. Assume $\phi(\rho_1^{-1}(z))$ is not a fibre. Then $\rho_2(\phi(\rho_1^{-1}(z)))=\PP^1$ since $\rho_2$ is proper, which means that there is a point in $\phi(\rho_1^{-1}(z))$ that gets mapped to $x_1$. So, $\phi(\rho_1^{-1}(x_0))\cap \phi(\rho_1^{-1}(z))\neq\emptyset$, which is absurd because $\phi$ is an isomorphism.
\end{proof}

Define the following morphisms in $\Iso(\Dd_n)$, for $n\geq 4$.
\begin{enumerate}
    \item $$R^\pm_\l:\Dd_n(Q,\gamma)\rightarrow\Dd_n(Q',\gamma'),$$
    $$R^\pm_\lambda=(\lambda^2 x, \mu\lambda^{-2} y, \pm\mu\lambda^{-1} z), $$
    with $\mu=\gamma/\gamma'$, for all $Q,Q'$ such that $Q(\lambda^2 x)=\lambda^{2(n-1)}Q'(x)$ and for all $\gamma,\gamma'\neq0$ such that $\lambda^n=\pm\mu$;
    \item 
    $$P^\pm_\l:\Dd_n(Q,0)\rightarrow\Dd_n(Q',0),$$
$$P^\pm_\lambda=(\lambda^2 x, \pm\lambda^{n-2} y, \pm\lambda^{n-1} z), $$
for all $Q,Q'$ such that $Q(\lambda^2 x)=\lambda^{2(n-1)}Q'(x)$ and for all $\l\in\CC^\times$.
\end{enumerate}

\begin{theorem}\label{teo iso Dn}
Let $Q,Q'$ monic polynomials of degree $n-1$ and $\gamma,\gamma'\in \CC$.
\begin{enumerate}[(i)]
    \item If $n>4$, the only isomorphisms in $\Iso(\Dd_n)$ are of the form $R_\l^\pm$ and $P_\l^\pm$.
    \item If $n=4$, let $Q(x)=x^3+ax^2+bx+c$ and $Q'(x)=x^3+a'x^2+b'x+c'$ . Then we have an isomorphism $\tau:\Dd_n(Q,\gamma)\rightarrow\Dd_n(Q',\gamma')$ given by
\begin{equation}\label{def tau}
    \tau=\left(-\frac{1}{2}x+\frac{i}{2}y-\frac{1}{4}a,\frac{3i}{2}x-\frac{1}{2}y+\frac{i}{4}a,z\right),
\end{equation}
every time $(Q',\gamma')$ satisfies the following condition:

\begin{equation}\label{eq condition iso order 3}
    \begin{aligned}
        &a'=a, \\
        &b'=\frac{1}{8}(3a^2-4b-12 i \gamma), \\
        &c'=c+\frac{1}{16}(a^3-4ab-4ia\gamma), \\
        &\gamma'=\frac{i}{8}(a^2-4b+4i\gamma).
    \end{aligned}
\end{equation}
The groupoid $\Iso(\Dd_4)$ is generated by all isomorphisms of the form $R_\l^\pm$, $P_\l^\pm$ and $\tau$.

\end{enumerate}
\end{theorem}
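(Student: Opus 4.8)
The approach is to use Proposition \ref{Prop Blackbox} to reduce the classification to the geometry of the boundary of $X_n$. Given an isomorphism $\phi\colon\Dd_n(Q,\gamma)\to\Dd_n(Q',\gamma')$, Proposition \ref{Prop Blackbox} produces an extension $\bar\phi\colon X_n(Q,\gamma)\to X_n(Q',\gamma')$ carrying the boundary $C_n\cup F_+\cup F_-$ onto the boundary and preserving self-intersection numbers. Since $C_n^2=3-n$ and $F_\pm^2=-1$, for $n>4$ the curve $C_n$ is the unique boundary component with self-intersection $<-1$, so $\bar\phi(C_n)=C_n$, $\bar\phi$ permutes $\{F_+,F_-\}$, and in particular $\bar\phi(F_\infty)=F_\infty$. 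For $n=4$ all three boundary curves are $(-1)$-curves, so $\bar\phi$ may permute them freely; this is the origin of the extra isomorphism $\tau$ and is what makes $n=4$ the delicate case.

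I would first treat the situation where $\bar\phi(F_\infty)=F_\infty$ (automatic when $n>4$), which forces $\bar\phi(C_n)=C_n$ as well, since $\bar\phi$ then permutes the two components of $F_\infty=F_+\cup F_-$. By Lemma \ref{prop bundle isomorphism}, $\bar\phi$ takes $\rho$-fibres to $\rho$-fibres, inducing $g\in\Aut(\PP^1)$ with $g(\infty)=\infty$. The crucial point is that $\rho|_{C_n}\colon C_n\to\PP^1$ has degree $C_n\cdot F_\infty=2$, and its branch locus — computed chart by chart using that $C_n$ is cut out by $w=0$ independently of $(Q,\gamma)$ — is exactly $\{0,\infty\}$; since $\bar\phi$ preserves $C_n$ compatibly with $g$, we get $g(0)=0$ and hence $g(x)=\lambda^2x$ for some $\lambda\in\CC^\times$. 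On a generic smooth fibre $\bar\phi$ is an isomorphism of plane conics, which by Lemma \ref{Prop iso conic} extends to $PGL_3$; as it matches the two points $C_n\cap(\text{fibre})$ on the line $\{w=0\}$, this extension preserves $\{w=0\}$, so in the affine chart $\phi(y)$ and $\phi(z)$ are affine-linear in $y,z$ over $\CC[x]$. Imposing that $\phi$ intertwines the defining relations is then a finite computation: comparing the coefficients of $y^2,\,z^2,\,yz,\,y,\,z$ and the constant term, and using monicity of $Q$ and $Q'$, forces $\phi(y)=\alpha y$, $\phi(z)=\beta z$ with $\alpha^2=\lambda^{2n-4}$, $\beta^2=\lambda^{2n-2}$, $Q(\lambda^2x)=\lambda^{2n-2}Q'(x)$ and $\gamma\alpha=\lambda^{2n-2}\gamma'$ (in particular $\gamma=0\iff\gamma'=0$); this is exactly the list of the $R^\pm_\lambda$ when $\gamma,\gamma'\neq0$ and of the $P^\pm_\lambda$ when $\gamma=\gamma'=0$. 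This settles (i) and the fibration-preserving part of (ii).

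To finish (ii) I would deal, for $n=4$, with the isomorphisms whose extension does not fix $F_\infty$. First one checks directly that \eqref{def tau} defines an isomorphism $\tau\colon\Dd_4(Q,\gamma)\to\Dd_4(Q',\gamma')$ precisely when \eqref{eq condition iso order 3} holds, and — mirroring Levy's relation for $T$ — that $\tau^3=\id$. Because $\tau$ mixes $x$ and $y$, $\bar\tau$ does not preserve the fibration, so $\bar\tau(F_\infty)\neq F_\infty$; since $\bar\tau$ permutes the three boundary $(-1)$-curves by a permutation of order dividing $3$ and moves $F_\infty$, it acts on $\{C_n,F_+,F_-\}$ as a $3$-cycle. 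On the other hand $(x,-y,z)$, which lies among the $R^\pm_\lambda$ (resp.\ $P^\pm_\lambda$), preserves the fibration and transposes $F_+\leftrightarrow F_-$. As $\tau$, $R^\pm_\lambda$, $P^\pm_\lambda$ are defined out of every object, the subgroupoid they generate realizes all of $S_3$ on $\{C_n,F_+,F_-\}$; hence any $\phi$ can be post-composed with a suitable such $g$ so that $\overline{g\circ\phi}$ fixes $F_\infty$, whence $g\circ\phi\in\{R^\pm_\lambda,P^\pm_\lambda\}$ by the previous step and $\phi\in\langle R^\pm_\lambda,P^\pm_\lambda,\tau\rangle$.

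The main obstacle I anticipate is the $n=4$ case: once the self-intersection numbers no longer single out $C_n$, the fibration is not automatically preserved, and one must exhibit the order-$3$ isomorphism $\tau$ by hand, check that together with the rescalings it generates the full boundary symmetry group $S_3$, and use this to reduce back to the fibration-preserving analysis. For $n>4$ the geometric rigidity does the work, and the final coefficient-matching is routine.
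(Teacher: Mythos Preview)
Your proposal is correct and follows the same architecture as the paper's proof: extend via Proposition~\ref{Prop Blackbox}, read off the permutation of the boundary components $C_n,F_+,F_-$ from their self-intersections, treat the fibration-preserving case by pinning down the induced $g\in\Aut(\PP^1)$ and then matching coefficients, and for $n=4$ use that $\tau$ (a $3$-cycle on the boundary) together with the rescaling $(x,-y,z)$ (a transposition of $F_\pm$) realizes all of $S_3$, reducing every isomorphism to the fibration-preserving case.

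The one substantive difference is how you constrain the fibration-preserving isomorphisms. The paper only uses that $g$ fixes $\infty$, so $g(x)=ax+b$, and then passes to the associated graded for the auxiliary filtration $\deg x=0$, $\deg y=\deg z=1$: since $\gr_{\Ff}\Dd_n(Q,\gamma)\cong\CC[x,y,z]/(xy^2-z^2)$ independently of $(Q,\gamma)$, Proposition~\ref{Prop Makar-Limanov} forces $\gr(g)=(\alpha^2\beta^{-2}x,\beta y,\alpha z)$ with $\alpha,\beta$ scalars, so $g$ is already diagonal up to additive constants; the coefficient comparison is then short. Your route instead observes that $\rho|_{C_n}$ is a degree-$2$ cover of $\PP^1$ branched exactly at $\{0,\infty\}$, so $g(0)=0$ and $g(x)=\lambda^2x$; this is a clean geometric substitute for the Makar-Limanov step and avoids that external input. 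The trade-off is that, without Makar-Limanov, your coefficient comparison must also kill the off-diagonal and nonconstant pieces of the $2\times2$ block acting on $(y,z)$; this does go through, most cleanly by noting that since $\phi^{-1}$ is again polynomial over $\CC[x]$, the determinant $ch-df$ is a nonzero constant, whence the multiplier $m$ in the relation is constant and the equations $\lambda^2xd^2+h^2=m$, $\lambda^2xcd+fh=0$, $\lambda^2xc^2+f^2=mx$ force $d=f=0$ and $c,h\in\CC^\times$. You should make that step explicit rather than leaving it inside ``a finite computation''.
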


\begin{proof}
It is easy to check that the listed actions are well defined isomorphisms. Thus, we only need to prove that they generate the whole of $\Iso(\Dd_n)$. We know from Proposition \ref{Prop Blackbox} that every element of $\Iso(\Dd_n)$ lifts to an element of $\Iso(X_n)$. We can thus consider the groupoid $K:=\Iso(X_n, \Dd_n)$ of isomorphisms $X_n(Q,\gamma)\rightarrow X_n(Q',\gamma')$ that restrict to $\Dd_n(Q,\gamma)\rightarrow\Dd_n(Q',\gamma')$.

First, let us compute the subgroupoid $K_0\leq K$ of isomorphisms that preserve the conic bundle structure, i.e. that send a fibre to another fibre. Every isomorphism $g\in K_0$ induces an automorphism of $\PP^1$. In addition, it needs to preserve the boundary $F_\infty \cup C_n$, so it must preserve the fibre over $(1:0)$. Thus, the action on $\PP^1$ is given by a degree one polynomial $x\mapsto ax+b$. Since $g$ sends conics to isomorphic conics, by Lemma \ref{Prop iso conic}, it extends to a projective transformation of the projective plane that contains them (which is the fibre of the $\PP^2$ bundle $F_{a,b}$). Since it also preserves the intersection of the conics with $C_n$ (i.e. with $w=0$) we can write $g$ (as an isomorphism $\Dd_n(Q,\gamma)\rightarrow\Dd_n(Q',\gamma')$) as
$$ (x,y,z)\mapsto (ax+b, cy+dz+e, fy+hz+k),$$
with $a,b\in\CC$ and $c,d,e,f,h,k\in\CC[x]$ satisfying $ch-df\neq0$. 

Let us consider on $\Dd_n(Q,\gamma)$ the filtration $\Ff$ induced by $\deg x=0$ and $\deg y=\deg z=1$. Then
$$ \gr_\Ff (\Dd_n(Q,\gamma))\cong A=\CC[x,y,z]/(xy^2-z^2)$$
for every deformation parameter $(Q,\gamma)$. Since $g$ preserves the filtration $\Ff$, it induces a graded automorphism $\gr (g)$ of $A$. By Proposition \ref{Prop Makar-Limanov}, $\gr (g)=(\alpha^2\beta^{-2} x, \beta y, \alpha z)$ for some $\alpha,\beta\in\CC^*$, so 
\begin{equation}\label{eq isomorphism g}
    g=(\alpha^2\beta^{-2} x + b, \beta y+c, \alpha z+d),
\end{equation}
with $b\in\CC$ and $c,d\in\CC[x]$. Plugging \eqref{eq isomorphism g} into the defining equation of $\Dd_n(Q,\gamma)$ we get the following condition:
$$Q(\alpha^2\beta^{-2} x + b)+(\alpha^2\beta^{-2} x + b)(\beta y+c)^2+(\alpha z+d)^2-\gamma (\beta y+c)=  k(Q'(x)+xy^2+z^2-\gamma' y), $$
for some $k\in\CC[x,y,z]$. Comparing the coefficients of $z^2,z,y^2,y$ and the constant term we get, respectively
\begin{align}
    &\alpha^2=k, \label{z^2}\\
    &2\alpha d=0,\label{z}\\
    &(\alpha^2\beta^{-2}x+b)\beta^2=kx, \label{y^2}\\
    &2(\alpha^2\beta^{-2}x+b)\beta c-\gamma\beta=-k\gamma', \label{y}\\
    &Q(\alpha^2\beta^{-2}x+b)=kQ'(x). \label{cost}    
\end{align}
From \eqref{z^2}, \eqref{z} and \eqref{y^2} we get that $k\in\CC^*$, $d=0$ and $b=0$. From \eqref{y}, since $k$ is a constant, we get that the coefficient of $x$ is $0$, so $c=0$. Combining \eqref{y},\eqref{z^2} and \eqref{y^2} we also get $\gamma=\gamma'=0 \wedge \gamma\beta=\alpha^2\gamma'$. From \eqref{cost} and \eqref{z^2} we have $Q(\alpha^2\beta^{-2} x)=\alpha^2 Q'(x)$. In particular, from comparing the $x^{n-1}$ coefficients, we get $\alpha^{2n-4}=\beta^{2n-2}$, so 
\begin{equation}\label{eq rel alpha beta}
    \alpha^{n-2}=\pm\beta^{n-1}.
\end{equation}
Consider $\lambda:=\alpha\beta^{-1}$. Then from \eqref{eq rel alpha beta} we get that $\lambda^{n-1}=\pm\alpha$ and $\lambda^{n-2}=\pm\beta$, with the same sign. We can thus rewrite the relation on $Q$ as 
\begin{equation}\label{eq Q Q'}
    Q(\lambda^2 x)=\lambda^{2(n-1)}Q'(x).
\end{equation} 
If we write $Q(x)=x^{n-1}+a_{n-2}x^{n-2}+\dots+a_0$, then by \eqref{eq Q Q'} we have $Q'(x)=x^{n-1}+\l^{-2}a_{n-2}x^{n-2}+\dots+\l^{-2(n-1)}a_0$.

Let us first consider the case with $\gamma,\gamma'\neq0$. Denote by $\mu:=\gamma/\gamma'$. We have the condition $\mu\beta=\alpha^2$. So $\pm\mu\lambda^{n-2}=\lambda^{2n-2}$, i.e. $\lambda^{n}=\pm\mu$. Thus, $K_0$ contains an isomorphism $R^\pm_\l:\Dd_n(Q,\gamma)\rightarrow\Dd_n(Q',\gamma')$ of the form
$$R^\pm_\lambda=(\lambda^2 x, \lambda^{n-2} y, \pm\lambda^{n-1} z)=(\lambda^2 x, \mu\lambda^{-2} y, \pm\mu\lambda^{-1} z), $$
for all $\lambda$ such that $\lambda^{n}=\pm\mu$ and for all $Q,Q'$ such that \eqref{eq Q Q'} holds.

When $\gamma=\gamma'=0$ we have no extra condition. Thus, $K_0$ contains an isomorphism $P^\pm_\l:\Dd_n(Q,0)\rightarrow\Dd_n(Q',0)$ of the form
$$P^\pm_\lambda=(\lambda^2 x, \pm\lambda^{n-2} y, \pm\lambda^{n-1} z), $$
for all $\lambda\in\CC^*$ and for all $Q,Q'$ such that \eqref{eq Q Q'} holds. By equations \eqref{z^2} to \eqref{cost}, the only elements in $K_0$ are of the form $R_\l, P_\l$.

As we noted before, the automorphisms in $K_0$ preserve the fibre $F_\infty$. On the other hand, if an isomorphism in $K$ preserves $F_\infty$, then it is in $K_0$, by Lemma \ref{prop bundle isomorphism}. Thus an isomorphism in $K\setminus K_0$ does not preserve $F_\infty$ or, equivalently, $C_n$. Since the self intersection numbers for $C_n, F_+$ and $ F_-$ are $3-n, -1, -1$ we see that $\Iso(\Dd_n)=K_0$ for $n>4$. For $n=4$, one directly checks that we have an isomorphism $\tau: \Dd_4(Q,\gamma)\rightarrow\Dd_4(Q',\gamma')$ of order $3$ defined by \eqref{def tau}, with $Q',\gamma'$ as in \eqref{eq condition iso order 3}. The isomorphism $\tau$ does not preserve the conic bundle, so it cyclically permutes the curves $F_+,F_-$ and $C_4$. Together with the isomorphism $\sigma_y:\Dd_4(Q,\gamma)\rightarrow\Dd_4(Q,-\gamma)$ defined by $\sigma_y=(x,-y,z)$, we have a full action of $S_3$ on set of the three curves $F_+, F_-, C_4$.

Take now any isomorphism $\phi:\Dd_4(Q,\gamma)\rightarrow\Dd_4(Q',\gamma')$. If $\phi$ fixes the curves $F_+,F_-$ and $C_4$, then it is in $K_0$. Otherwise, we can compose $\phi$ with the appropriate permutation generated by $\tau$ and $\sigma_y$ to get an isomorphism that fixes the three curves, i.e. it is again in $K_0$. Since $\sigma_y\in K_0$, it follows that $\tau$ and $K_0$ generate the whole groupoid $K$.

\end{proof}

\begin{theorem}\label{teo Poisson iso Dn}
    Let $Q,Q'$ monic polynomials of degree $n-1$ and $\gamma,\gamma'\in \CC$. 
    \begin{itemize}
        \item If $n>4$ there are only two classes of Poisson isomorphisms, $\id$ and $\sigma$, where 
        \begin{equation}\label{def sigma}
            \sigma:\Dd_n(Q,\gamma)\rightarrow\Dd_n(Q,-\gamma), \hspace{8mm} \sigma=(x,-y,-z).
        \end{equation}
    \item If $n=4$ the groupoid $\PIso(\Dd_4)$ is generated by the isomorphism $\sigma$ defined in \eqref{def sigma} and by the isomorphism $\tau$ defined in \eqref{def tau}. Moreover, every time the composition makes sense, $\sigma$ and $\tau$ satisfy the $S_3$ relations $\tau^3=\id$, $\sigma^2=\id$ and $\sigma\circ\tau\circ\sigma=\tau^2$.
    \end{itemize}
\end{theorem}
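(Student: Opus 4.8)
The plan is to take the description of $\Iso(\Dd_n)$ from Theorem \ref{teo iso Dn} and pick out which of its generators are Poisson, using that a morphism $\phi$ is a Poisson isomorphism exactly when $\phi(\{a,b\})=\{\phi(a),\phi(b)\}$ for $a,b\in\{x,y,z\}$ --- the bracket being that of the source on the left and of the target on the right.

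The generators $R_\lambda^\pm$ and $P_\lambda^\pm$ all have the diagonal shape $\phi=(c_1x,c_2y,c_3z)$. Imposing the Poisson condition on $\{x,y\}=2z$ gives $c_3=c_1c_2$; imposing it on $\{x,z\}=-2xy+\gamma$ and comparing the coefficients of $xy$ gives $c_1c_2=c_1c_3$, i.e.\ $c_2=c_3$, whence $c_1=1$, while comparing constant terms gives $\gamma=c_3\gamma'$. On the other hand, for $\phi$ to carry the defining relation $Q(x)+xy^2+z^2-\gamma y$ to a scalar multiple of $Q'(x)+xy^2+z^2-\gamma'y$, the coefficient of $z^2$ shows the scalar is $c_3^2$, and the leading coefficients of $Q,Q'$ (both monic of degree $n-1$) then force $c_3^2=1$ and $Q=Q'$, $\gamma'=c_3\gamma$. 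Hence $c_2=c_3=\pm1$: the value $c_3=1$ yields $\id\colon\Dd_n(Q,\gamma)\to\Dd_n(Q,\gamma)$, and $c_3=-1$ yields $\sigma=(x,-y,-z)\colon\Dd_n(Q,\gamma)\to\Dd_n(Q,-\gamma)$, and a direct check shows that both are genuinely Poisson. So among the $R_\lambda^\pm$ and $P_\lambda^\pm$ exactly $\id$ and $\sigma$ are Poisson. For $n>4$ this settles the statement, since by Theorem \ref{teo iso Dn}(i) every isomorphism of deformations is one of these; and $\sigma^2=\id$ because $(x,-y,-z)$ is an involution.

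For $n=4$ there is the extra generator $\tau$ of \eqref{def tau}. First I would verify directly that $\tau$ is Poisson: as $\tau$ is affine in $x,y$ and fixes $z$, one expands $\{\tau(x),\tau(y)\}$, $\{\tau(x),\tau(z)\}$ and $\{\tau(y),\tau(z)\}$ bilinearly in the bracket of $\Dd_4(Q',\gamma')$ and checks, using the explicit relations \eqref{eq condition iso order 3} between $(Q,\gamma)$ and $(Q',\gamma')$, that these equal $\tau$ applied to $2z$, $-2xy+\gamma$ and $Q'(x)+y^2$ respectively. The relations $\sigma^2=\id$, $\tau^3=\id$ and $\sigma\circ\tau\circ\sigma=\tau^2$ are then a finite computation; alternatively they are inherited from the $S_3$ relations of $\Sigma$ and $T$ in Theorem \ref{teo iso quantizations type D}, since $\sigma$ and $\tau$ have the same linear parts as $\Sigma$ and $T$ and the relations only involve this affine data.

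Finally, to see that $\sigma$ and $\tau$ generate all of $\PIso(\Dd_4)$, take a Poisson isomorphism $\phi\colon\Dd_4(Q,\gamma)\to\Dd_4(Q',\gamma')$. Being an isomorphism of affine varieties, it extends by Proposition \ref{Prop Blackbox} to an isomorphism $X_4(Q,\gamma)\to X_4(Q',\gamma')$, which permutes the three boundary curves $C_4,F_+,F_-$. Since $\tau$ permutes these three curves cyclically, after composing $\phi$ on the left with a suitable power of $\tau$ --- which is Poisson, so the composite remains Poisson --- we may assume the composite sends $C_4$ to $C_4$, hence preserves $F_\infty=F_+\cup F_-$, hence preserves the conic bundle structure (Lemma \ref{prop bundle isomorphism}) and so lies in the subgroupoid of fibre-preserving isomorphisms, which by the proof of Theorem \ref{teo iso Dn} is exactly the collection of maps $R_\lambda^\pm$ and $P_\lambda^\pm$. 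By the first part the composite is then $\id$ or $\sigma$, so $\phi$ is a product of powers of $\tau$ and at most one $\sigma$. I expect the main difficulty to be the sign- and coefficient-bookkeeping in the diagonal computation (which comparison forces which relation, and checking that $\id$ and $\sigma$ really are Poisson), together with the geometric input for $n=4$ --- that $\tau$ genuinely acts as a $3$-cycle on $\{C_4,F_+,F_-\}$ --- which is what makes the reduction to fibre-preserving maps go through.
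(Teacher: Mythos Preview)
Your proof is correct and follows essentially the same approach as the paper: take the generators of $\Iso(\Dd_n)$ from Theorem~\ref{teo iso Dn} and check which are Poisson, isolating $\id$ and $\sigma$ among the diagonal maps $R_\lambda^\pm, P_\lambda^\pm$ and verifying $\tau$ and the $S_3$ relations directly for $n=4$. One minor caveat: your ``alternative'' route to the $S_3$ relations via $\Sigma$ and $T$ is not justified as stated, since $\tau$ and $T$ share the same linear part but have different constant terms --- the direct finite computation you mention first is what actually works (and is what the paper does).
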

\begin{proof}
    It is sufficient to check when the isomorphisms of Theorem \ref{teo iso Dn} satisfy the Poisson relations. In particular, by imposing $\{x,y\}=2z$, we get that $R_\l^\pm$ and $P_\l^\pm$ are Poisson isomorphisms only if $\lambda=1$, in which case $R_1^+=P_1^+=\id$ and $R_1^-=P_1^-=\sigma$. It is then straightforward to check that $\sigma$ is a Poisson isomorphism. For the case $n=4$, one can directly check that $\tau$ is a Poisson isomorphism and that $\tau$ and $\sigma$ satisfy the $S_3$ relations.
\end{proof}

Putting together Theorem \ref{teo iso quantizations type D} and \ref{teo Poisson iso Dn} we can confirm Conjecture \ref{conj main} for type $\mathbf{D}$. 
\begin{theorem}\label{teo iso type D}
    Let $n\geq 4$. We have an isomorphism of groupoids
    \begin{equation*}
        \Iso(D_n)\cong\PIso(\Dd_n).
    \end{equation*}
    In particular, for every deformation parameter $(Q,\ga)$,
    \begin{equation*}
        \Aut(D(Q,\ga))\cong\PAut(\Dd(Q,\ga)).
    \end{equation*}
\end{theorem}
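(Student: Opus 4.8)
The plan is to exhibit an explicit isomorphism of groupoids by matching generators, using the fact that both sides turn out to be transformation groupoids of the same finite group acting on the same parameter space. By Theorem \ref{teo iso quantizations type D} and Theorem \ref{teo Poisson iso Dn}, the groupoids $\Iso(D_n)$ and $\PIso(\Dd_n)$ have the same set of objects, indexed by the pairs $(Q,\gamma)$ with $Q$ monic of degree $n-1$ and $\gamma\in\CC$ (after applying Levy's reparametrization $D_n(Q,\gamma)\leftrightarrow D_n(Q+nx,\gamma)$, which is a bijection of parameter spaces and hence harmless). First I would take the object map of our groupoid isomorphism to be this identification. On morphisms, for all $n\geq4$ I send the generator $\Sigma\colon D_n(Q,\gamma)\to D_n(Q,-\gamma)$ to $\sigma\colon\Dd_n(Q,\gamma)\to\Dd_n(Q,-\gamma)$ of \eqref{def sigma}, and, when $n=4$, I additionally send $T\colon D_4(Q,\gamma)\to D_4(Q',\gamma')$ to $\tau\colon\Dd_4(Q,\gamma)\to\Dd_4(Q',\gamma')$ of \eqref{def tau}.

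To see this is well defined I would check that the source/target data agree and that the defining relations are preserved. The involutions $\Sigma$ and $\sigma$ induce the same map $\gamma\mapsto-\gamma$ of the parameter space, and $T$ and $\tau$ induce the same transformation of parameters: this is precisely the comparison of the formulas for $(b',c',\gamma')$ in Theorem \ref{teo iso quantizations type D} with condition \eqref{eq condition iso order 3}, once one accounts for the shift $Q\mapsto Q+4x$ of Levy's normalization (which is exactly why the constant terms appearing in the expressions for $T$ and $\tau$ differ). Moreover, by Theorem \ref{teo iso quantizations type D}(i) the arrows $\Sigma,T$ satisfy the $S_3$ relations $\Sigma^2=\id$, $T^3=\id$, $\Sigma\circ T\circ\Sigma=T^{-1}$ whenever the compositions are defined, and by Theorem \ref{teo Poisson iso Dn} so do $\sigma,\tau$; for $n>4$ only $\Sigma^2=\id$ (resp. $\sigma^2=\id$) remains. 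Since each of $\Sigma$ (resp. $\sigma$) and $T$ (resp. $\tau$) has, for a given source object, a unique target for which it is defined, both $\Iso(D_n)$ and $\PIso(\Dd_n)$ are, for fixed $n$, the transformation groupoid of the same group ($\ZZ/2\ZZ$ if $n>4$, $S_3$ if $n=4$) acting on the parameter space, and the assignment above is the induced isomorphism of transformation groupoids. Finally, by Theorem \ref{teo iso quantizations type D}(ii) every isomorphism of quantizations lies in the subgroupoid generated by $\Sigma$ and $T$, and by Theorem \ref{teo Poisson iso Dn} every Poisson isomorphism lies in the subgroupoid generated by $\sigma$ and $\tau$, so the map is surjective onto both and hence an isomorphism. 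The ``in particular'' statement then follows by restricting to the vertex group at a fixed object $(Q,\gamma)$, i.e. taking $Q'=Q$ and $\gamma'=\gamma$.

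The step I expect to be the main obstacle is the bookkeeping confirming that $T$ and $\tau$ really do induce the same transformation of parameters: one must track Levy's reparametrization $D_n(Q,\gamma)\leftrightarrow D_n(Q+nx,\gamma)$ (which at $n=4$ shifts the linear coefficient of $Q$ by $4$) throughout the comparison of $(b',c',\gamma')$ with \eqref{eq condition iso order 3}, and verify that after this shift the condition under which $\tau$ is defined coincides with the condition defining $Q'$ and $\gamma'$ for $T$. Everything else is a matter of transporting the group-presentation data (the $S_3$, respectively $\ZZ/2\ZZ$, relations) across the identification of objects, which is routine.
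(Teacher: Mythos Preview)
Your proposal is correct and is essentially the paper's argument spelled out in detail: the paper's proof is the single sentence ``putting together Theorem \ref{teo iso quantizations type D} and Theorem \ref{teo Poisson iso Dn}'', and what you describe is exactly how one fleshes that out --- match $\Sigma\leftrightarrow\sigma$ and $T\leftrightarrow\tau$, observe they induce the same parameter transformations and satisfy the same $S_3$ relations, and conclude both groupoids are the transformation groupoid of the same group action on the same parameter space.

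One simplification: the reparametrization bookkeeping you flag as the main obstacle is unnecessary. The paper has already translated Levy's result into its own parametrization (see the remark immediately following Theorem \ref{teo iso quantizations type D}), so the conditions on $(Q',\gamma')$ for $T$ there are literally identical to \eqref{eq condition iso order 3}; no shift $Q\mapsto Q+nx$ needs to be tracked at this stage. The difference between the constants $1+\tfrac14 a$ in $T$ and $\tfrac14 a$ in $\tau$ is a difference in the \emph{maps}, not in the parameter action, and is irrelevant to the groupoid isomorphism.
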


We can also describe the groups of affine automorphisms of $\Dd_n(Q,\gamma)$ by checking for which deformation parameters the isomorphisms in Theorem \ref{teo iso Dn} are automorphisms. 

\begin{theorem}\label{teo aut Dn}
Let $n\geq4$, $Q$ monic of degree $n-1$ and $\gamma\in\CC$. Write $Q(x)=x^dP(x^m)$, with $d\in\ZZ_{\geq 0}$ and $m\in\ZZ_{\geq 1}$, with $m$ maximal possible.
\begin{itemize}
    \item If $\gamma\neq0$, the group 
    \begin{equation}
        G=\{(\lambda^2 x, \lambda^{-2} y, \pm\lambda^{-1} z) \, | \, \lambda^{2n}=1, \, \lambda^{2m}=1 \}
    \end{equation}
    acts on $\Dd_n(Q,\gamma)$ via automorphism.
    \item If $\gamma=0$, the group
    \begin{equation}
        G=\{(\lambda^2 x, \pm\lambda^{n-2} y, \pm\lambda^{n-1} z) \, | \, \lambda^{2m}=1 \}
    \end{equation}
    acts on $\Dd_n(Q,\gamma)$.
\end{itemize}
If $n>4$, then $\Aut(\Dd_n(Q,\gamma))=G$. 

In particular, for generic $(Q,\gamma)$, $$\Aut(\Dd_n(Q,\gamma))=\ZZ/2\ZZ,$$ generated by $\sigma_z=(x,y,-z)$. 

For generic $Q$ and $\gamma=0$, $$\Aut(\Dd_n(Q,\gamma))=(\ZZ/2\ZZ)\times(\ZZ/2\ZZ),$$ with $\sigma_y=(x,-y,z)$ added to the $G$.

If $n=4$, write $Q(x)=x^3+ax^2+bx+c$. Then:
\begin{itemize}
    \item if $\gamma=0$ and $b=a^2/4$, then $\Aut(\Dd_4(Q,\gamma))$ is generated by $G$ and $\tau$;
    \item if $\gamma\neq0$ and $b=a^2/4-i\gamma$ (respectively $b=a^2/4+i\gamma$), then $Aut(\Dd_4(Q,\gamma))$ is generated by $G$ and $\sigma\circ\tau$ (respectively $\sigma\circ\tau^{-1}$);
    \item otherwise, $Aut(\Dd_4(Q,\gamma))=G$.
\end{itemize}

\end{theorem}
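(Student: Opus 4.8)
The group $\Aut(\Dd_n(Q,\gamma))$ is the vertex group of the groupoid $\Iso(\Dd_n)$ at the object $\Dd_n(Q,\gamma)$, and Theorem~\ref{teo iso Dn} already lists generators for $\Iso(\Dd_n)$: the scaling isomorphisms $R_\lambda^\pm$ and $P_\lambda^\pm$ for all $n\geq4$, together with $\tau$ when $n=4$. The plan is to decide for which parameters each generator has coinciding source and target, assemble the scalings into $G$, and then read off $\Aut$ using that these morphisms exhaust $\Iso(\Dd_n)$. If $R_\lambda^\pm$ or $P_\lambda^\pm$ is an endomorphism of a fixed $\Dd_n(Q,\gamma)$, then in the notation of Theorem~\ref{teo iso Dn} one needs $Q'=Q$ and $\gamma'=\gamma$, so $\mu=1$ and the defining relation collapses to $Q(\lambda^2x)=\lambda^{2(n-1)}Q(x)$, supplemented by $\lambda^n=\pm1$ when $\gamma\neq0$. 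The one genuine point is the arithmetic fact that, writing $Q(x)=x^dP(x^m)$ with $m$ maximal, the identity $Q(\lambda^2x)=\lambda^{2(n-1)}Q(x)$ holds if and only if $\lambda^{2m}=1$: comparing coefficients it holds iff $\lambda^{2(n-1-i)}=1$ for every $i$ with $[x^i]Q\neq0$, and the set of such $n-1-i$ lies in $m\ZZ_{\geq0}$ with greatest common divisor exactly $m$ by maximality of $m$. Substituting back yields the two displayed descriptions of $G$; that $G$ is a subgroup is immediate from the composition rule $R_\lambda\circ R_{\lambda'}=R_{\lambda\lambda'}$ and its analogues.

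For $n>4$, Theorem~\ref{teo iso Dn}(i) says the $R_\lambda^\pm$ and $P_\lambda^\pm$ are all of $\Iso(\Dd_n)$, hence $\Aut(\Dd_n(Q,\gamma))=G$. Specialising to $m=1$ gives the generic statements: when $\gamma\neq0$ the conditions $\lambda^{2n}=\lambda^{2}=1$ force $\lambda=\pm1$, so $G=\{(x,y,\pm z)\}\cong\ZZ/2\ZZ$, generated by $\sigma_z$; when $\gamma=0$ one obtains $G=\{(x,\pm y,\pm z)\}\cong(\ZZ/2\ZZ)^2$, which amounts to adjoining $\sigma_y$.

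For $n=4$, arguing as in the final paragraph of the proof of Theorem~\ref{teo iso Dn}, every automorphism of $\Dd_4(Q,\gamma)$ is a composite of an element of $G$ with one of the six $S_3$-coset representatives $\id,\tau,\tau^{-1},\sigma,\sigma\circ\tau,\sigma\circ\tau^{-1}$; it then remains to record, via the relations \eqref{eq condition iso order 3}, for which parameters each of these is a loop at $\Dd_4(Q,\gamma)$. Imposing $(Q',\gamma')=(Q,\gamma)$ shows $\tau$ is an automorphism iff $\gamma=0$ and $b=a^2/4$; since $\sigma$ negates $\gamma$, imposing $(Q',-\gamma')=(Q,\gamma)$ shows $\sigma\circ\tau$ is an automorphism iff $b=a^2/4-i\gamma$, and $\sigma\circ\tau^{-1}$ iff $b=a^2/4+i\gamma$ (one checks in each case that the induced change in $c$ is trivial). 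When $\gamma\neq0$ these three conditions are pairwise incompatible, producing the three sub-cases of the statement; when $\gamma=0$ they all reduce to $b=a^2/4$, and since $\sigma_y\in G$ the group $\langle G,\tau\rangle$ already realises the full $S_3$-extension.

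The whole argument is bookkeeping on top of Theorems~\ref{teo iso Dn} and \ref{teo iso quantizations type D}, with two exceptions that need care: the arithmetic lemma identifying $Q(\lambda^2x)=\lambda^{2(n-1)}Q(x)$ with $\lambda^{2m}=1$ through the maximality of $m$, and the $n=4$ accounting of which $S_3$-translate of $\tau$ returns to the starting deformation and why no further loops appear. I expect the latter to be the main obstacle.
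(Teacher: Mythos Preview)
Your proposal is correct and follows essentially the same approach as the paper. The paper's own proof is a single sentence: it asserts that Theorem~\ref{teo aut Dn} follows at once from Theorem~\ref{teo iso Dn} together with the observation that $Q(\lambda^2 x)=\lambda^{2(n-1)}Q(x)$ if and only if $\lambda^{2m}=1$, where $Q(x)=x^dP(x^m)$ with $m$ maximal. You supply exactly this arithmetic lemma, with the gcd argument on the support of $Q$, and then carry out the $n=4$ case analysis explicitly by imposing $(Q',\gamma')=(Q,\pm\gamma)$ in the relations~\eqref{eq condition iso order 3}; the paper leaves all of this implicit. Your reference to Theorem~\ref{teo iso quantizations type D} is not logically needed, but it is harmless as a template for the expected answer.
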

\begin{proof}
    Theorem \ref{teo aut Dn} follows at once from Theorem \ref{teo iso Dn}, by noticing that $Q(\l^2 x)=\l^{2(n-1)}Q(x)$ if and only if $Q(x)=x^dP(x^m)$, with $\l^{2m}=1$. 
\end{proof}

\begin{oss}
    Notice that we recover from Theorem \ref{teo aut Dn} the result of \cite[Corollary 4.5]{blancNonrationalityFibrationsAssociated2015} for the undeformed case, i.e. when $Q(x)=x^{n-1}$ and $\gamma=0$.
\end{oss}

\subsection*{Final remarks}

Let $X$ be a conic symplectic singularity. The moduli space of filtered quantization and deformations of $X$ is the quotient of a Cartan space $\mathfrak{P}$ by the action of a Weyl group $W$ \cite{namikawaPoissonDeformationsAffine2010, losevDeformationsSymplecticSingularities2022}. All the filtered (Poisson) isomorphisms between deformations and quantizations are induced by a graded Poisson automorphism of $\CC[X]$ \cite[Proposition 3.21 and Corollary 3.22]{losevDeformationsSymplecticSingularities2022}. If $X=V/\Gamma$ is a symplectic quotient, then the group of graded Poisson automorphisms of $\CC[X]$ is equal to $\Theta:=N_{Sp(V)}(\Gamma)/\Gamma$ \cite[Lemma 3.20]{losevDeformationsSymplecticSingularities2022}. 

From the results of this paper, we know that, for Kleinian singularities of type $\mathbf{A}$ and $\mathbf{D}$, there are no ``wild'' non-filtered isomorphisms in $\Iso$ and $\PIso$. In fact, in type $\mathbf{D}$ there exist only filtered isomorphisms, while in type $\mathbf{A}$ the only non-filtered isomorphisms are inner, that is they are the exponentiation of an inner nilpotent derivation. 

For a Kleinian singularity $X$ with Dynkin diagram $\Delta$, the group of diagram automorphisms $\Aut(\Delta)$ acts on $\CC[X]$ by graded Poisson automorphisms, and can be identified with a subgroup of $\Theta$. In every type except type $\mathbf{A}$ the diagram automorphisms coincide with $\Theta$, while in type $\mathbf{A}$ they are a proper subgroup. In this case, the hyperbolic rotations are not in $\Aut(\Delta)$. Interestingly, the only isomorphisms in $\PIso(\Aa)$ that are not automorphisms come from $\Aut(\Delta)$. In other words, the action of the hyperbolic rotations and the inner automorphisms on the moduli space $\mathfrak{P}/W$ is trivial.

For type $\mathbf{E}$, we expect no non-filtered isomorphisms to exist. Thus $\PIso(\Ee)$ and $\Iso(E)$ should be trivial, except for type $\mathbf{E_6}$, where $\Aut(\Delta)=\ZZ/2\ZZ$. Unfortunately, an explicit presentations of the quantizations in type $\mathbf{E}$ with generators and relations is not known. 

It would be interesting to understand if the absence of ``wild'' non-filtered isomorphisms for Kleinian singularities is incidental or if Conjecture \ref{conj main} holds in the more general setting of conical symplectic singularities.

\printbibliography

\Addresses

\end{document}